\documentclass[11pt,a4paper]{amsart}

\usepackage{lib/lib}
\usepackage{lib/style}
\usepackage{amssymb}
\usepackage{amsfonts}

% ==== Commands ================================================================

%%%%%%% todonotes
\usepackage[colorinlistoftodos, textwidth=3cm, textsize=small]{todonotes}

% Symbols
\newcommand{\NN}{\mathbb{N}} % Banned by pacoC
\newcommand{\ZZ}{\mathbb{Z}}
\newcommand{\QQ}{\mathbb{Q}}
\newcommand{\RR}{\mathbb{R}}

% Math operators

% \NewTextOperator is like \DeclareMathOperator but wraps the parameter in parentheses automatically
%   This can be avoided by appending a suffix to the command, e.g., \conv*{H_{n,\ZZ^2}}
%   Like math operators created with \DeclareMathOperator, these commands can be decorated with
%   subscripts/superscripts, e.g., \conv_+{H_{n,\ZZ^2}} -> \text{conv}_+{\left(H_{n,\ZZ^2}\right)}
\NewTextOperator{\conv}
\NewTextOperator{\vertices}[vert]
\NewTextOperator{\vol}
\NewTextOperator{\area}
\NewTextOperator{\sign}
\NewTextOperator{\slope}
\NewTextOperator{\affine}[aff]
% \NewTextOperator{\abs}
\NewTextOperator{\cone}[cone][\left\langle][\right\rangle]

\DeclareMathOperator{\fra}{frac}

\DeclareMathOperator{\raycast}{raycast}
\DeclareMathOperator{\minsl}{minsl}
\DeclareMathOperator{\nextpt}{nextpt}
\DeclareDocumentCommand{\Minsl}{O{\Lambda}O{\ensuremath{H_n}}}{\ensuremath{\minsl_{#1,#2}}}
\DeclareDocumentCommand{\Nextpt}{O{\Lambda}O{\ensuremath{H_n}}}{\ensuremath{\nextpt_{#1,#2}}}
\newcommand{\NonZero}[1]{\ensuremath{{#1}{\scriptstyle\setminus\left\{{\bf 0}\right\}}}}

\newcommand{\floor}[1]{\left\lfloor #1 \right\rfloor}
\newcommand{\ceil}[1]{\left\lceil #1 \right\rceil}
\newcommand{\abs}[1]{\left\lvert #1 \right\rvert}

% The \left and \right ceiling parentheses are typeset too high

\DeclareDocumentCommand{\cardinal}{m}{{\left|{#1}\right|}}

% Word commands

% \NewWordCommand creates text abbreviations which expand correctly in presence
%   of whitespace/punctuation after the command, i.e., "blah \wlg blah" and
%   "blah \wlg." both expand correctly, surrounded by two spaces in the first
%   case, and only preceeded by one in the second.
\NewWordCommand{\wlg}[without loss of generality]
\NewWordCommand{\wrt}[with respect to]
\RenewWordCommand{\iff}[iff]

% Algorithms
% \DeclareDocumentCommand{\typeHint}{m}{#1}
% \DeclareDocumentCommand{\typeHint}{m}{\color{gray} #1}
\DeclareDocumentCommand{\typeHint}{m}{\;{\color{lightgray} #1}}
\DeclareDocumentCommand{\textmono}{m}{{\normalfont \texttt{#1}}}

% ==============================================================================

\begin{document}

\title{On the convex hull of integer points above the hyperbola%
}
    \thanks{%
        Supported by grants PID2022-137283NB-C21 and PRE2022-000020, %
        funded by MCIN/AEI/10.13039/501100011033.%
    }%

\author[D.~Alc\'antara]{David Alc\'antara}
\author[M.~Blanco]{M\'onica Blanco}
\author[F.~Criado]{Francisco Criado}
\author[F.~Santos]{Francisco Santos}

\address[D.~Alc\'antara, M.~Blanco, F.~Santos]{
    Departmento de Matem\'aticas, Estad\'istica y Computaci\'on\\
    Universidad de Cantabria\\
         Santander\\
         Spain}
\email{david.alcantara@unican.es,
monica.blancogomez@unican.es,\break
francisco.santos@unican.es}

\address[F.~Criado]{Departamento de Matem\'aticas \\
    CUNEF Universidad\\
        Madrid \\
        Spain}
\email{francisco.criado@cunef.edu}

\maketitle

\begin{abstract}
We show that the polyhedron defined as the convex hull of the lattice points above the hyperbola $\left\{xy = n\right\}$ has between $\Omega(n^{1/3})$ and $O(n^{1/3} \log n)$ vertices.
The same bounds apply to any hyperbola with rational slopes except that instead of $n$ we have $n/\Delta$ in the lower bound and by $\max\left\{\Delta, n/\Delta\right\}$ in the upper bound, where $\Delta \in \ZZ_{>0}$ is the discriminant.

We also give an algorithm  that enumerates the vertices of these convex hulls in logarithmic time per vertex.
One motivation for such an algorithm is the deterministic factorization of integers.
\end{abstract}

\setcounter{tocdepth}{2}
\tableofcontents

\section{Introduction}\label{sec:introduction} 

For any $n \in \ZZ_{>0}$, lattice points on the hyperbola
\[
    h_n \coloneqq \left\{(x, y) \in \RR^2 : xy = n\right\}
\]
are in correspondence with the factorizations of $n$ into two factors.
As the region of $\RR_+^2$ delimited by the inequality $xy \ge n$ is strictly convex,
all such points are also vertices of the convex hull of 
$H_n\cap \ZZ^2$ where
\begin{align}
\label{eq:H_n}
    H_n \coloneqq \left\{(x, y) \in \RR_{\ge 0}^2 :xy \ge n\right\},
\end{align}
denotes the region above (the positive branch of) the hyperbola.

With this motivation in mind, in this paper we ask ourselves how many vertices can
 $\conv{H_n\cap \ZZ^2}$ have, and how to enumerate them.
Observe that the number is finite, since they must all be contained in $\{1,\dots, n\}^2$.

We generalize this question to an arbitrary hyperbola $h \subset \RR^2$ with equation
\begin{align}
    \label{eq:hyp-intro}
    a (x-x_0)^2 + b (x-x_0)(y-y_0) + c (y-y_0)^2 = n,
\end{align}
with $a, b, c, n, x_0, y_0 \in \RR$.
The equation defines a hyperbola if and only if its discriminant $\Delta \coloneqq \sqrt{b^2 - 4ac}$ \ is
strictly positive and $n \ne 0$.
We no longer assume $n$ to be an integer.

Let us denote $H$ the closure of one of the convex components of $\RR^2 \setminus h$.
Then, $\conv{H \cap \ZZ^2}$ has finitely many vertices if and only if the asymptotes of $h$ have rational slopes. These slopes are
\begin{equation}
\label{eq:slopes}
    \frac{-b+\Delta}{2c} = \frac{2a}{-b-\Delta}
    \quad\text{ and }\quad
    \frac{-b-\Delta}{2c} = \frac{2a}{-b+\Delta}.
\end{equation}
Since multiplying Equation~\eqref{eq:hyp-intro} by a constant does not change the hyperbola, without loss of generality we assume $\Delta \in \QQ$.
Under this assumption the slopes are rational if $a, b$ and $ c$ are rational. The converse also holds: if the slopes and $\Delta$ are rational, then $a,b,c$ must be rational as well: for example, the difference of the slopes is $\frac{\Delta}{c}$. Similar arguments prove the rationality of $a$ and $b$.

Hence, we may also assume that $a,b,c$ and $\Delta$ are integer.
Our main result is:
\begin{theorem}
    \label{thm:main-new}
    There is a constant $C \in \RR$ such that the following holds.

    If $h$ is an arbitrary hyperbola with asymptotes of rational slope,
    given by equation~\eqref{eq:hyp-intro} with $a, b, c, \Delta \in \ZZ$,
    and $H$ is the closure of one
    of the convex components of $\RR^2 \setminus h$, then
    \[
        2\left(\frac{|n|}{\Delta}\right)^{1/3} - 2 \; \le  \; \cardinal{\vertices{\conv{H \cap \ZZ^2}}} \; \le\;  C m^{1/3} (\log_2{m} + 2),
    \]
    where $m \coloneqq \max\{2\Delta, \frac{|n|}{\Delta}\}$. 
\end{theorem}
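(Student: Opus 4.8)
The plan is to prove the two bounds separately, and in both cases to reduce the general hyperbola to the model case of $h_n = \{xy = n\}$ via an affine-lattice change of coordinates. A hyperbola with rational asymptotic slopes $p_1/q_1$ and $p_2/q_2$ can, after applying a unimodular transformation sending the asymptotic directions to the coordinate axes (possibly followed by scaling the ambient coordinates), be put in a form where $H$ becomes the region $\{(x,y): xy \ge N, x,y \ge \text{const}\}$ or a lattice-translate thereof, where $N$ is comparable to $|n|/\Delta$. The key bookkeeping point is how the lattice $\mathbb{Z}^2$ transforms: the map straightening the asymptotes has determinant governed by $\Delta$, so the image of $\mathbb{Z}^2$ is a sublattice (or superlattice) of index roughly $\Delta$ inside the standard lattice in the new coordinates. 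This is exactly why $|n|/\Delta$ appears in the lower bound (fewer lattice points, so fewer vertices) and why $\max\{\Delta, |n|/\Delta\}$ appears on the upper side (either the hyperbola is ``large'' or the lattice is ``coarse,'' and in the coarse case one still cannot have too many vertices). I would isolate this reduction as a lemma: \emph{up to an affine unimodular map, $\conv{H\cap\mathbb{Z}^2}$ is $\conv{H_N \cap L}$ for a lattice $L$ with $[\mathbb{Z}^2:L]$ or $[L:\mathbb{Z}^2]$ equal to $\Delta$, and $N = |n|/\Delta$ up to a bounded factor.}

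\textbf{Lower bound.} For $h_n$ itself, the strategy is to exhibit $\Omega(n^{1/3})$ explicit vertices. A clean way: the points of the form $(q, \lceil n/q\rceil)$ for $q$ ranging over an arithmetic-like progression are all vertices of $\conv{H_n\cap\mathbb{Z}^2}$ provided consecutive ones have strictly decreasing slope of the connecting edge; since the hyperbola has curvature, two lattice points $(q,\lceil n/q\rceil)$ and $(q',\lceil n/q'\rceil)$ with $q' - q$ bounded by about $q^{3/2}/\sqrt{n}$ will be ``visible'' (nothing of $H_n\cap\mathbb{Z}^2$ strictly below the segment joining them). Summing the number of such steps as $q$ goes from $1$ to $\sqrt n$ gives an integral $\int_1^{\sqrt n} \sqrt{n}\,q^{-3/2}\,dq \sim n^{1/3}$, and doubling for the symmetric branch gives $2(n/\Delta)^{1/3} - 2$ after the reduction replaces $n$ by $|n|/\Delta$. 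The $-2$ absorbs boundary effects. The sharp constant $2$ should come out of being slightly careful near $q \approx n^{1/3}$, where the edges have slope $\approx -1$; I would make the counting argument produce a vertex for roughly each integer value of $q^{1/3}$ on each branch.

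\textbf{Upper bound.} This is where I expect the real work. The classical fact (Andrews' theorem / Jarník-type bounds) is that a convex lattice polygon contained in a region of area $A$ has $O(A^{1/3})$ vertices, but here $H_N$ is unbounded, so one cannot apply it as a black box. The approach is to localize: partition the boundary of $\conv{H_N\cap\mathbb{Z}^2}$ according to the slope of the edges, say into $O(\log N)$ dyadic blocks where the slope lies in $[-2^{k+1}, -2^k]$ (and symmetrically for reciprocal slopes past the ``diagonal'' point), and show that within each block the relevant portion of the hyperbola, together with the lattice, lies in a parallelogram of area $O(N)$, so Andrews' bound gives $O(N^{1/3})$ vertices per block. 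Here one must handle the lattice $L$: if $[\mathbb{Z}^2:L] = \Delta$, the area bound gets divided by $\Delta$ in Andrews' estimate, giving $O((N/\Delta)^{1/3})$; if instead $[L:\mathbb{Z}^2]=\Delta$ we get $O((N\Delta)^{1/3})$ — and tracking $N\approx |n|/\Delta$ through both cases yields $\max\{\Delta, |n|/\Delta\}^{1/3}$, i.e. $m^{1/3}$. Multiplying by the $O(\log N) = O(\log m)$ number of blocks gives $C m^{1/3}(\log_2 m + 2)$. The main obstacle is making the dyadic-block decomposition rigorous: one needs that each edge of the hull really falls into one block (edges have a well-defined slope, so that's fine), that the number of blocks is genuinely $O(\log m)$ once one shows vertices cannot occur for $|$slope$|$ outside $[1/m, m]$ roughly (because the hyperbola is squeezed against its asymptotes and the lattice has bounded density), and that the per-block area bound is uniform — this last point requires choosing the bounding parallelogram for block $k$ to have one pair of sides of slope $\approx 2^k$ tangent-ish to the hyperbola and the opposite sides determined by where the hull meets the neighbouring blocks, then estimating its area via the explicit equation $xy = N$ as $\Theta(N)$ independent of $k$. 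I would first carry this out cleanly for $h_n$ and only then feed in the reduction lemma.
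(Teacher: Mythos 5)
Your upper-bound strategy --- partition the convex hull into $O(\log m)$ slope-dyadic (equivalently, $x$-coordinate-geometric) blocks, bound each block by a parallelogram of area $O(m)$, and apply Andrews/Rabinowitz inside each block --- is essentially the same as the paper's. The paper uses rectangles $R_i = [k^i,k^{i+1}]\times[n/k^{i+1},n/k^i]$ (all of equal lattice-normalized area $\tfrac{(k-1)^2}{k}\tfrac{n}{\det\Lambda}$) plus small unit squares $U_i$ to catch vertices sitting just above the $R_i$; the slope-dyadic and $x$-dyadic decompositions coincide for $xy=n$ since slope $\approx -n/x^2$. Your reduction to the standard hyperbola also matches in spirit, although note that the paper's transformation sends $h$ to $\{xy=|n|/\Delta^2\}$ and $\mathbb{Z}^2$ to a translated superlattice of determinant $1/\Delta$ --- the ``effective size'' $\tfrac{n}{\det\Lambda}=|n|/\Delta$ is what matters, and you should track that ratio rather than $N$ alone.

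The lower-bound argument, however, has a genuine gap. First, the arithmetic is wrong: your integral evaluates as
\[
\int_1^{\sqrt n}\sqrt n\,q^{-3/2}\,dq \;=\; \sqrt n\bigl[-2q^{-1/2}\bigr]_1^{\sqrt n}\;=\;2\sqrt n - 2n^{1/4}\;=\;\Theta(\sqrt n),
\]
not $\Theta(n^{1/3})$. Second, and more importantly, the ``edge-gap'' heuristic (step size $\sim q^{3/2}/\sqrt n$) is a density estimate that overcounts and does not by itself certify that any particular lattice point is a vertex: a point $(q,\lceil n/q\rceil)$ at distance $< q^{3/2}/\sqrt n$ from its neighbour might still sit above the chord joining other nearby lattice points if the fractional parts of $n/q'$ for intermediate $q'$ happen to be small. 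The paper avoids this entirely with a local and exact criterion (its Lemma~\ref{lemma:point_in_unit_moon_vertex}): consider the chord $\ell$ through $(q_x-1,\tfrac{n}{q_x-1})$ and $(q_x+1,\tfrac{n}{q_x+1})$; this chord lies \emph{above} the hyperbola inside the unit strip and \emph{below} it outside, so if the bottommost lattice point $\mathbf{q}$ on the vertical line $\{x=q_x\}$ satisfies $\mathbf{q}_y < \tfrac{nq_x}{q_x^2-1}$, then $\mathbf{q}$ lies strictly below every other lattice point of $H_n$ when separated by a small tilt of $\ell$, hence is a vertex. A one-line calculation shows this holds whenever $q_x\le n^{1/3}$, so each of the first $\lfloor n^{1/3}\rfloor$ vertical lattice lines (and, symmetrically, horizontal lines) carries a vertex, giving $2n^{1/3}-2$ directly. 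If you want to salvage your approach, you would need to replace the curvature integral with a per-line certificate of this type; the chord-at-$q_x\pm1$ trick is what makes it clean.
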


One value of $C$ that works is
$C = \pi^{2/3} 2^{7/3} \simeq 10.810$, but a slight optimization gives the better value $C \simeq 8.205$.
See Theorem~\ref{thm:upper-bound-superlattice} and Remark~\ref{rmk:upper-bound-with-constant} for precise statements and more details.

Observe that assuming $|n| \ge 2\Delta^2$ Theorem~\ref{thm:main-new} gives
\[
    2m^{1/3} - 2 \le \cardinal{\vertices{\conv{H \cap \ZZ^2}}} \le C m^{1/3} (\log_2{m} + 2),
\]
where $m = |n|/\Delta$.
Hence, our bounds are tight modulo a logarithmic factor.

Specializing to the standard hyperbola, we have

\begin{corollary}
    \label{coro:main-new}
    There is a constant $C \in \RR$ such that
    \[
        2n^{1/3} - 2 \le \cardinal{\vertices{\conv{H_n\cap \ZZ^2}}} \le C n^{1/3} (\log_2{n} + 2).
    \]
\end{corollary}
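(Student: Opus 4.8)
The plan is to read Corollary~\ref{coro:main-new} off Theorem~\ref{thm:main-new} by specializing the parameters to the standard hyperbola $h_n=\{xy=n\}$. First I would put $h_n$ into the normal form~\eqref{eq:hyp-intro}: with $x_0=y_0=0$, $a=c=0$ and $b=1$ one has $a(x-x_0)^2+b(x-x_0)(y-y_0)+c(y-y_0)^2=xy$, so the equation becomes exactly $xy=n$, the discriminant is $\Delta=\sqrt{b^2-4ac}=1\in\ZZ_{>0}$, and $a,b,c,\Delta$ are all integers. The asymptotes are the two coordinate axes, which are rational lines (if one insists on both asymptotes being non-vertical, apply first a unimodular change of coordinates; this changes neither $\conv{H\cap\ZZ^2}$ up to $\mathrm{GL}_2(\ZZ)$ nor its number of vertices).

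Next I would check that the set $H$ of the theorem is the set $H_n$ of~\eqref{eq:H_n}. The complement $\RR^2\setminus h_n$ has three connected components: the non-convex $\{xy<n\}$, and the two pieces of $\{xy>n\}$ lying in the open first and third quadrants; the first-quadrant piece is convex, and since $xy\ge n>0$ forces $x,y>0$, its closure is exactly $\{(x,y):x\ge 0,\ y\ge 0,\ xy\ge n\}=H_n$. Hence Theorem~\ref{thm:main-new} applies to $\conv{H_n\cap\ZZ^2}$, with $m=\max\{2\Delta,\ |n|/\Delta\}=\max\{2,n\}$.

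For $n\ge 2$ we have $m=n$, so the two inequalities of Theorem~\ref{thm:main-new} become, verbatim, $2n^{1/3}-2\le\cardinal{\vertices{\conv{H_n\cap\ZZ^2}}}\le Cn^{1/3}(\log_2 n+2)$ with the same constant $C$. The only remaining value is $n=1$: there $\conv{H_1\cap\ZZ^2}$ is the quadrant $\{x\ge 1,\ y\ge 1\}$, with the single vertex $(1,1)$, so $0=2\cdot 1^{1/3}-2\le 1\le C\cdot 1^{1/3}(\log_2 1+2)=2C$ for every $C\ge 1/2$. I do not anticipate any real obstacle; the proof is essentially bookkeeping, and the only point needing a moment's care is that $m$ equals $n$ only for $n\ge 2$, so the single case $n=1$ has to be verified (or absorbed into the constant) separately.
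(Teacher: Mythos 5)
Your proof is correct and follows the same route the paper implicitly takes: specialize Theorem~\ref{thm:main-new} with $a=c=0$, $b=1$, $\Delta=1$, identify $H$ with $H_n$, and note that $m=\max\{2,n\}=n$ for $n\ge 2$. The paper gives no explicit proof (it just says ``Specializing to the standard hyperbola''), so your only addition is the careful check of $n=1$, which the paper glosses over but which is indeed needed since then $m=2\ne n$.
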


These results fit in the framework of similar problems studied in convex geometry.
For example, in 1997 B\'ar\'any and
Larman~\cite{Barany1997ConvHullBall} proved that the number of
vertices of the convex hull of lattice points in the $(d - 1)$-sphere
of radius $\sqrt n$ is in
$\Theta\hspace{-0.25em}\left(\vphantom{n^2}\right.\hspace{-0.25em}%
    n^{\binom{d}{2} /(d + 1)}%
\left.\vphantom{n^2}\hspace{-0.25em}\right)$, generalizing the $2$-dimensional bound of
$\Theta\left(n^{1/3}\right)$
established before by Balog and
B\'ar\'any~\cite{BalogBarany1991ConvHullLatticeDisc}.
The upper bound follows from a much more general result of Andrews from 1963~\cite{Andrews1963ALB} bounding the number of vertices of a lattice polytope in terms of its volume.
Since we use that result to prove our upper bound, we state it as Lemma~\ref{lemma:andrews}.
See the survey by B\'ar\'any~\cite{Barany2008Survey} (2008) for more information on similar questions.

As a preparation for the proof of Theorem~\ref{thm:main-new}, in Section~\ref{subsec:reduction-standard-hyperbola} we reduce the case of an arbitrary hyperbola to a standard hyperbola, except now lattice points are considered with respect to a translated superlattice of the integers.
Once this is done, the upper and lower bounds of Theorem~\ref{thm:main-new} are proved in Sections~\ref{subsec:upper-bound} and~\ref{subsec:lower-bound} respectively. For the upper bound we cover the hyperbola with a logarithmic number of rectangles of controlled volume and apply to those rectangles the result of Andrews mentioned above. For the lower bound we show that the first $n^{1/3}$ lattice lines parallel to each of the two asymptotes of the hyperbola contain vertices of the convex hull.

We have computed $\cardinal{\conv{H_n\cap \ZZ^2}}$ for $n \in \left\{1, \dots, 1\, 000\,000\right\}$, and we plot the results against our upper and lower bounds in Section~\ref{subsec:experimental-results}. Although the $\Theta(n^{1/3})$ bound for the disc may suggest that our lower bound is close to tight while the upper bound may be prone to improvement, we would say that the experimental results are not conclusive.

\medskip

In the second part of the paper (Section~\ref{sec:algorithm}) we describe an algorithm to compute the convex hull with $O(\log n)$ operations per vertex:

\begin{theorem} \label{thm:main_algorithm}
    There is an algorithm that, with input $n \in \NN$ and a
    vertex ${\bf p}=({\bf p}_x, {\bf p}_y)$ of $H_n\cap \ZZ^2$, finds the next vertex
    ${\bf p}'=({\bf p}'_x, {\bf p}'_y)$ using at most $O(\log n)$ arithmetic operations.

    Here by ``next'' we mean that
    \[
        {\bf p}'_x = \min{\left\{x > {\bf p}_x : (x, y) \in \vertices{\conv{H_n\cap \ZZ^2}} \text{ for some } y\right\}}.
    \]
\end{theorem}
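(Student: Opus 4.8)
The task is: given a vertex $\mathbf{p}$ of $\conv{H_n \cap \ZZ^2}$, find the next one in $O(\log n)$ arithmetic operations. The key geometric fact is that the region $H_n$ is convex with the hyperbola as its lower-left boundary, so the vertices of $\conv{H_n \cap \ZZ^2}$, read left to right, have strictly increasing $x$-coordinates and the slopes of the edges between consecutive vertices are strictly increasing (they go from near-vertical to near-horizontal, all negative). So finding the next vertex $\mathbf{p}'$ after $\mathbf{p}$ amounts to: among all lattice points of $H_n$ strictly to the right of $\mathbf{p}$, find the one such that the ray from $\mathbf{p}$ through it has the smallest (most negative, i.e. steepest) slope — equivalently, the first lattice point you hit when you sweep a line downward, pivoting at $\mathbf{p}$. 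That is the "edge" emanating from $\mathbf{p}$ on the upper hull. This reformulation is what the notation $\raycast$, $\minsl$, $\nextpt$ in the preamble is clearly set up to capture.

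So the plan is: (1) reduce "find the next vertex" to the subproblem of computing, for a pivot point $\mathbf{p}$ and a slope that we will binary-search on, something like "the leftmost lattice point of $H_n$ lying on or below the line through $\mathbf{p}$ of that slope" — call this a raycast query. Then (2) binary-search on the slope (or rather, on the denominator of the slope, using the Stern–Brocot / continued-fraction structure of rationals) to find the critical slope of the next edge; the answer $\mathbf{p}'$ is the nearest lattice vertex in that direction. For step (1), the raycast query reduces to the following: given a line $\ell$, find $\min\{x \in \ZZ : x > \mathbf{p}_x,\ (x,y)\in H_n \cap\ZZ^2 \text{ for some } y \text{ with }(x,y) \text{ on or below } \ell\}$. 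Because $H_n$ is $\{xy \ge n\}$, for each integer $x$ the smallest admissible $y$ is $\lceil n/x\rceil$, so the question "is there a lattice point of $H_n$ at abscissa $x$ weakly below $\ell$?" is the single inequality $\lceil n/x \rceil \le (\text{value of } \ell \text{ at } x)$, and the set of $x$ satisfying it is an interval-like set that one can locate with floor/ceiling arithmetic and at most one auxiliary binary search — all in $O(\log n)$ arithmetic operations. One must be slightly careful that "arithmetic operation" here is on integers of size $O(n)$, so each is genuinely $O(1)$ in the model.

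The heart of the argument, and the main obstacle, is the $O(\log n)$ bound on the outer search over slopes. A naive linear scan over candidate edge slopes $-q/p$ with $p,q$ ranging up to $\sqrt n$ is far too slow. The fix is the classical observation — going back to Stern–Brocot tree / Harvey–Hittmeir-style techniques for this kind of lattice-hull traversal — that the sequence of edge directions along a convex lattice hull behaves like a continued-fraction expansion: one descends the Stern–Brocot tree, at each node doing a raycast query to decide whether to branch left or right, and the depth needed is $O(\log n)$ because the denominators involved are bounded by $O(\sqrt n)$ and Stern–Brocot depth is logarithmic in the denominator. I would make this precise by showing: (a) the next edge direction from $\mathbf{p}$ is a rational $-q/p$ in lowest terms with $p, q = O(\sqrt n)$ (this follows because the edge connects two lattice points inside $\{1,\dots,n\}^2$ and the hull has $O(n^{1/3}\log n)$ vertices, but even the crude $\{1,\dots,n\}$ box bound on coordinates combined with primitivity of the edge vector gives what we need); (b) a raycast at a trial direction tells us whether the true direction is steeper or shallower, so a Stern–Brocot descent converges to it; (c) each step is one raycast, $O(\log n)$ arithmetic ops, and there are $O(\log n)$ steps, for $O(\log^2 n)$ — and then one has to do better, observing that the raycast queries along the descent are nested/monotone so their costs telescope, or alternatively using that after a raycast one can jump several Stern–Brocot levels at once (the "add a large multiple" step of the continued fraction algorithm), bringing the total down to $O(\log n)$. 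Pinning down this amortization, and verifying the steepness-comparison oracle is correctly implemented by the raycast, is where the real care is needed; the rest is bookkeeping with floors, ceilings, and the convexity of $xy \ge n$.
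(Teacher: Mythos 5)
Your high-level plan is close in spirit to the paper's: both use a continued-fraction / Stern--Brocot style descent to home in on the direction of the next edge, which amounts to finding the minimum-slope lattice vector in $(H_n - {\bf p}) \cap \{x \ge 0\}$. But there is a genuine gap in how you get from that idea to the claimed $O(\log n)$ bound, and the paper's resolution is different from anything you pin down.

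The gap is in the inner primitive. You formulate the ``raycast'' as a point-location / sweep query --- ``find the leftmost integer abscissa $x > {\bf p}_x$ whose lowest lattice point $\lceil n/x\rceil$ lies weakly below a trial line $\ell$'' --- and you estimate this at $O(\log n)$ operations via an auxiliary binary search. That leads you to a naive $O(\log^2 n)$ total, and you then gesture at two possible repairs (``the raycasts telescope'' or ``jump several Stern--Brocot levels at once'') without working out either. The telescoping idea does not obviously give anything, and the level-jumping idea, as you have framed the primitive, still multiplies an $O(\log n)$-cost query by $O(\log n)$ jumps. The paper's fix is to use a different, much simpler primitive: $\raycast(H_n, {\bf q}, {\bf v})$, the largest $m$ such that ${\bf q}, {\bf q}+{\bf v}, \dots, {\bf q}+m{\bf v}$ do not yet cross $h_n$. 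This is just intersecting a straight line with the hyperbola and taking a floor, so it is a single quadratic-formula computation and costs $O(1)$ (Lemmas~\ref{lemma:quadratic_formula} and~\ref{lemma:exists_lattice_ray_cast_alg}). Crucially, this one $O(1)$ call returns exactly the ``quotient'' in the continued-fraction expansion --- i.e.\ how many Stern--Brocot levels to jump in the current direction --- so the outer iteration maintains a lattice basis whose cone shrinks in lockstep with the Euclidean algorithm on the coordinates of $\minsl({\bf p})$ (Lemmas~\ref{lemma:o}, \ref{lemma:i}, and the ``right search basis'' invariant of Definition~\ref{def:right_search_basis}). That yields $O(\log n)$ iterations each of cost $O(1)$. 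Without replacing your $O(\log n)$ query by this $O(1)$ ray-intersection primitive (and without the explicit Euclidean-algorithm correspondence that bounds the iteration count), your argument only reaches $O(\log^2 n)$. Two smaller remarks: your claim that the primitive edge direction has denominator $O(\sqrt n)$ is false --- near the ends of the hull the $x$-component can be $\Theta(n)$ (the paper bounds it by $2n + 2\det\Lambda$ in Lemma~\ref{lemma:exception}); this doesn't change the asymptotics but the stated justification is wrong. And the paper has to special-case the last edge of the hull (the case ${\bf p}_y\in(1,2)$ of Lemma~\ref{lemma:exception}), although for $\Lambda=\ZZ^2$ starting at an integer vertex that case is vacuous, so you are not missing anything essential there.
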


Hence:

\begin{corollary}
    \label{corollary:standard_hyperbola_convex_hull_alg}
    Let $n\in \NN$.
    There is an algorithm to compute $\vertices{H_n \cap \ZZ^2}$ with time
    complexity in
    \[
        O\big( N\log n\big) \le O\big( n^{1/3} \log^2 n\big),
    \]
    where $N=\cardinal{\vertices{\conv{H_n\cap \ZZ^2 }}}$.
\end{corollary}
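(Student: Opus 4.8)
The plan is to bootstrap Theorem~\ref{thm:main_algorithm} into a full enumeration by iterating its ``next vertex'' routine, so the only real work is setting up the initialization and termination correctly. Since $n \in \NN$, every lattice point $(x,y) \in H_n \cap \ZZ^2$ has $x \ge 1$ and $y \ge 1$; moreover $(1,n)$ and $(n,1)$ lie on $h_n$ and hence, by the strict convexity of the region $\{xy \ge n\}$ recalled in the introduction, are vertices of $\conv{H_n\cap\ZZ^2}$, and they are respectively the ones with smallest and largest $x$-coordinate. The algorithm therefore sets ${\bf p} \coloneqq (1,n)$, outputs it, and then repeatedly replaces ${\bf p}$ by the vertex ${\bf p}'$ returned by the procedure of Theorem~\ref{thm:main_algorithm}, outputting each one, until it produces $(n,1)$.

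For correctness, note that no two vertices of $\conv{H_n\cap\ZZ^2}$ share an $x$-coordinate: the bounded part of the boundary is a convex chain from $(1,n)$ to $(n,1)$, and the two unbounded edges (in directions $(0,1)$ and $(1,0)$) each contain only one vertex. Hence ordering the vertices by increasing $x$-coordinate is well defined, and by the meaning of ``next'' in Theorem~\ref{thm:main_algorithm} the loop walks through them consecutively, from $(1,n)$ to $(n,1)$, visiting each exactly once. The number of iterations is thus exactly $N = \cardinal{\vertices{\conv{H_n\cap\ZZ^2}}}$, and each costs $O(\log n)$ arithmetic operations by Theorem~\ref{thm:main_algorithm}, for a total of $O(N\log n)$. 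Since every vertex lies in $\{1,\dots,n\}^2$, all quantities handled have $O(\log n)$ bits, so this bound also reflects the bit complexity up to the usual polylogarithmic factors.

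Finally, substituting the upper bound of Corollary~\ref{coro:main-new}, namely $N \le C n^{1/3}(\log_2 n + 2) = O(n^{1/3}\log n)$, yields $O(N\log n) \le O(n^{1/3}\log^2 n)$, as claimed. I do not anticipate a genuine obstacle here: all the difficulty is concentrated in Theorem~\ref{thm:main_algorithm} itself, and once that is available the remaining content is just the two one-line observations that $(1,n)$ and $(n,1)$ are vertices and that distinct vertices have distinct $x$-coordinates, both immediate from the strict convexity of $\{xy \ge n\}$ already invoked in Section~\ref{sec:introduction}.
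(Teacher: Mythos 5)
Your proposal is correct and follows essentially the same route as the paper: initialize at the leftmost vertex, iterate the next-vertex routine of Theorem~\ref{thm:main_algorithm}, and plug in the bound on $N$ from Corollary~\ref{coro:main-new}. The paper phrases initialization and termination via the more general Proposition~\ref{prop:first_vertex} and Algorithm~\ref{alg:hyperbola_convex_hull} (needed for arbitrary lattices), but for $\Lambda=\ZZ^2$ and $n\in\NN$ these reduce precisely to your observations that $(1,n)$ is the leftmost vertex and $(n,1)$ the rightmost.
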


Again, our  result is more general and can be applied to an arbitrary rational hyperbola. Our precise statement is:

\begin{theorem}
\label{thm:general_algorithm}
    Consider the hyperbola $h$ of Eq.~\eqref{eq:hyp-intro}, with $\Delta = \sqrt{b^2 - 4ac} > 0$ and
    $a, b, c, \Delta \in \ZZ$.
    There is an algorithm that with input $a,b,c,n$ and
    a vertex of $H\cap \ZZ^2$, finds the next vertex
    using at most 
    \[
        O(\log (\max\{\abs{n}, \abs{a}, \abs{b}, \abs{c}\}))
    \]
    arithmetic operations.
\end{theorem}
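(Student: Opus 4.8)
The plan is to reduce Theorem~\ref{thm:general_algorithm} to the algorithm of Theorem~\ref{thm:main_algorithm} through the change of coordinates of Section~\ref{subsec:reduction-standard-hyperbola}. After clearing denominators we may assume the input consists of integers. That reduction turns the data $a,b,c,n$ into an affine isomorphism $\varphi$ of $\RR^2$ that maps $h$ to a standard hyperbola $\{uv = n/\Delta\}$ (flipping the sign of one coordinate if $n/\Delta<0$) and maps $\ZZ^2$ to a translated superlattice $\Lambda \supseteq \ZZ^2$ whose defining data --- a $\ZZ$-basis together with the translation vector --- have entries of size polynomial in $\Delta$. Since $\varphi$ restricts to an order-preserving bijection between $\vertices{\conv{H\cap\ZZ^2}}$ and $\vertices{\conv{H'\cap\Lambda}}$, where $H'$ is the branch of $\{uv=n/\Delta\}$ corresponding to $H$, it suffices to enumerate the vertices of the latter.

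I would then run the lattice-parametrized version of the ``next vertex'' routine on input $(\Lambda, H')$; the dependence on the lattice is exactly what the notation $\Nextpt$ and $\Minsl$ anticipates. Its correctness mirrors Theorem~\ref{thm:main_algorithm}: from the current vertex one does a $\raycast$ step onto the first line of $\Lambda$ meeting $H'$ strictly to the right, followed by a $\minsl$ search that walks down a strictly decreasing sequence of candidate edge slopes by a continued-fraction / Stern--Brocot bisection. The geometry needed is just that the region above a standard hyperbola is strictly convex and that a lattice meets it in a discrete family of parallel lines, both of which hold verbatim with $\Lambda$ in place of $\ZZ^2$; the arithmetic subroutines need only a $\ZZ$-basis of $\Lambda$, which the reduction supplies. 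One asymmetry must be handled: unlike $H_n\cap\ZZ^2$, the set $H'\cap\Lambda$ need not be invariant under $u\leftrightarrow v$, so the two ends of the convex hull near the two asymptotes are treated by the same subroutine after swapping the roles of the coordinates.

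For the running time, each $\raycast$ and each $\minsl$ call still costs $O(\log M)$ arithmetic operations, where $M$ bounds the integers that appear; these are polynomial in $n/\Delta$, in $\Delta$, and in the entries of the basis of $\Lambda$, all of which are $O(\max\{\abs n,\abs a,\abs b,\abs c\})$ since $\Delta\le\sqrt{b^2+4\abs{ac}}=O(\max\{\abs a,\abs b,\abs c\})$ and $\abs n/\Delta\le\abs n$. Computing $\varphi$, $\Lambda$ and the constant $n/\Delta$ from $a,b,c,n$ is a one-time cost of the same order --- evaluate $\Delta$, find the primitive asymptote directions with one $\gcd$, and invert a $2\times2$ integer matrix --- and pushing a vertex through $\varphi$ or $\varphi^{-1}$ is $O(1)$ operations. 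Summing these contributions gives the bound $O(\log(\max\{\abs n,\abs a,\abs b,\abs c\}))$, and Corollary~\ref{corollary:standard_hyperbola_convex_hull_alg} then generalizes accordingly.

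The step I expect to be the real work is the complexity bookkeeping for the generalized $\minsl$ bisection: one must re-prove that the denominators introduced by passing to $\Lambda$ (equivalently, by clearing $\Delta$ from $uv = n/\Delta$) enter the bounds only polynomially and, crucially, never force more than $O(\log M)$ iterations, and one must carry the rational translation part of $\varphi$ through all the exact arithmetic without losing control of the sizes of numerators and denominators. Everything else is a routine adaptation of the $\ZZ^2$ case.
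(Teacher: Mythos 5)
Your high-level plan — reduce to a standard hyperbola with a modified lattice, then run the \textmono{nextpt} routine — matches the paper's strategy, but there are two substantive problems.

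First, the reduction you describe doesn't match the one the paper actually needs. You take $\ZZ^2$ to a \emph{superlattice} with the hyperbola $\{uv = n/\Delta\}$. The paper's Proposition~\ref{prop:transform} (superlattice version) gives $\{xy = |n|/\Delta^2\}$ with $\det\Lambda = 1/\Delta$, and its Lemma~\ref{lemma:transform_sublattice} (the one used for the algorithm) deliberately rescales by $\Delta$ to obtain a \emph{sublattice} $\Lambda \subseteq \ZZ^2$ of determinant $\Delta$ and the hyperbola $\{xy = |n|\}$. Working in a sublattice of $\ZZ^2$ keeps all coordinates integral and is what makes the notion of a \emph{reduced} lattice and the standard-basis computation clean; your superlattice has rational basis vectors, and the bookkeeping you defer would have to track numerators and denominators throughout. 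This is fixable, but it is a real divergence, not a paraphrase of the paper's step.

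Second, and more seriously, you explicitly leave unaddressed exactly the point the proof hinges on: why the Stern--Brocot/Euclid-like loop inside \textmono{nextpt} terminates in $O(\log(\max\{|n|,|a|,|b|,|c|\}))$ iterations. In the paper, this follows from Lemma~\ref{lemma:exception}: when ${\bf p}_y \notin (1,2)$, the $x$-coordinate of $\Minsl({\bf p})$ is bounded by $2n + 2\det\Lambda$, and since $\Lambda$ is reduced, the first coefficient $a$ in $\Minsl({\bf p}) = a\,{\bf b}_1 - b\,{\bf b}_2$ equals that $x$-coordinate, so the simulated gcd runs on bounded inputs. The exceptional case ${\bf p}_y \in (1,2)$ (near the horizontal asymptote) would otherwise let $\Minsl({\bf p})_x$ be unbounded and is handled separately by computing the last vertex directly after a coordinate swap. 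You mention a coordinate swap ``near the asymptotes'' in passing, but you neither isolate the exceptional regime nor prove the bound on $\Minsl({\bf p})_x$; calling the remainder ``routine adaptation'' understates it, because without that bound the claimed $O(\log M)$ iteration count is simply unestablished. This is the genuine gap in the proposal.
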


Our complexity bounds are in the \emph{arithmetic model}, or \emph{RAM model} of computation~\cite{Cormen2022IntroAlgo}. In this model we assume that comparisons, additions, substractions, multiplications, integer divisions, and integer square roots can all be computed in $O(1)$ time. Yet, our bound has a dependence on the bit size of the input (via the parameters $n$ and $\Delta$) which comes from the computation of $\gcd$'s.

\begin{remark}
\label{remark:complexity}

By integer divisions and square root we mean computing $\floor{b/a}$ and $\floor{\sqrt a}$ for input numbers $a,b\in \RR$. Including these as elementary operations in the model has a two-fold justification.
On the one hand, for word-sized integers they can be computed in a single machine instruction.
On the other hand, if multiplication of $L$-bit integers takes $M(L)$ bit operations, then we can also compute the inverse ($1/a$) and square root ($\sqrt a$) of an integer $a$ of that size in $O(M(L))$ operations via the Newton iterations $x_{k+1}=ax_k^2$ and $y_{k+1}=\tfrac{1}{2}\left(y_k+\tfrac{a}{y_k}\right)$ respectively;
while these iterations take $O(\log L)$ and $O(\log^2(L))$ products respectively, in order to compute $\floor{b/a}$ and $\floor{\sqrt a}$ we do not need the full precision of these products in each iteration. By duplicating the precision in each iteration, the division can be computed in $M(L)+M(\lceil L/2 \rceil)+M(\lceil L/4 \rceil) + \dots + 1$ bit operations which, assuming superlinearity of $M$, gives the stated $O(M(L))$ bit operations in total. 

In fact, this argument shows that all the complexity bounds in this paper translate to the bit model (assuming $x_0,y_0,n\in \QQ$) simply multiplying them by $M(L)$, where $L$ is the bit size of the input.
\end{remark}

To simplify the complexity analysis of our algorithm, in Section 3.1 we again first reduce the case of a general hyperbola to that of the standard hyperbola by changing the lattice, except now we assume the new lattice $\Lambda$ to be a sublattice of $\ZZ^2$ rather than a superlattice. We also introduce the concept of \emph{standard basis}, the computation of which for a given $\Lambda$ we consider a preprocessing.

Once this is done,
our algorithm is based on the observation that if ${\bf p}$ and ${\bf q}$ are consecutive vertices of $H_n \cap ({\bf p} + \Lambda)$ with ${\bf p}_x <{\bf q}_x$, then the slope of ${\bf q} - {\bf p}$ is the minimum slope among all the vectors in
\[
(H_n - {\bf p}) \cap \Lambda \cap \left\{(x, y)\in \RR^2: x \ge 0\right\} \subset \Lambda.
\]
%(Here and in the rest of the paper we use the notation ${\bf p}_x, {\bf p}_y$ to denote the $x$ and $y$ coordinates of a point or vector ${\bf p}\in \RR^2$).

Hence, we devise an algorithm to compute that minimum slope for any ${\bf p} \in H_n$ and any lattice $\Lambda$ (Algorithm~\ref{alg:compute_nextpt}). This algorithm maintains a lattice basis that is guaranteed to contain the slope we are seeking, iteratively narrowing the cone spanned by that basis in a way that parallels the steps taken by the Euclidean algorithm for computing a $\gcd$. See the statement of Theorem~\ref{thm:exists_find_nextpt}, and its proof in Section~\ref{subsec:rsmin}.

As said at the beginning of the introduction, the algorithm of Corollary~\ref{corollary:standard_hyperbola_convex_hull_alg}
gives, in particular, a deterministic
factorization algorithm for $n$ with running time $O(n^{1/3} \log^2 n)$.
Faster algorithms, running in time $O(n^{1/4 + \varepsilon})$, were obtained by Strassen~\cite{Strassen1976IntFactAlg},
and slightly improved by Bostan, Gaudry and Schost~\cite{BostanGaudrySchost2007LinRecIntFact}. Yet, our algorithm is quite flexible in that it can start
looking for integer vertices in the hyperbola (hence for factorizations of $n$) at any particular place, not necessarily at a vertex, at a cost of a logarithmic factor in the complexity:

\begin{proposition}[Corollary~\ref{cor:find_next_vertex_from_non_vertex_log}]
    \label{prop:intro_initial_vertex}
    There is an algorithm that, with input $n \in \NN$ and an $x_{start}\in [0,\infty)$, finds the next vertex
    ${\bf p}=({\bf p}_x, {\bf p}_y)\in H_n\cap \ZZ^2$ using at most $O(\log^2 n)$ arithmetic operations.

    Here by ``next'' we mean that
    \[
        {\bf p}_x = \min{\left\{x \ge x_{start} : (x, y) \in \vertices{H_n\cap\ZZ^2} \text{ for some } y\right\}}.
    \]
\end{proposition}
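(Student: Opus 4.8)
\medskip

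The plan is to reduce the task to $O(\log n)$ calls to the next-vertex routine of Theorem~\ref{thm:main_algorithm}, each of which costs $O(\log n)$ arithmetic operations. First some reductions: every vertex of $\conv{H_n\cap\ZZ^2}$ has positive integer coordinates, and $(1,n)$ is always the vertex of smallest $x$-coordinate, so we return $(1,n)$ when $x_{start}\le 1$; and if $x_{start}>n$ there is no vertex to the right of $x_{start}$, since there the boundary of $\conv{H_n\cap\ZZ^2}$ is the ray $y=1$. Hence we may assume $2\le x_1\le n$, where $x_1:=\lceil x_{start}\rceil$. Because vertex $x$-coordinates are integers, the vertex we seek is the one of least $x$-coordinate among those with $x$-coordinate $\ge x_1$; a short convexity argument shows this is either the vertex lying on the line $x=x_1$, if there is one, or else the right endpoint of the unique edge of $\conv{H_n\cap\ZZ^2}$ that crosses that line.

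\emph{Getting near the hull.} The key point is that the lowest lattice point ${\bf q}_0:=(x_1,\lceil n/x_1\rceil)$ of the column $x=x_1$ lies in $H_n$ and, because the boundary of $\conv{H_n\cap\ZZ^2}$ has height in $[\,n/x_1,\lceil n/x_1\rceil\,]$ over that column, it lies within vertical distance $<1$ of that boundary. Using the routine of Theorem~\ref{thm:main_algorithm} together with the $x\leftrightarrow y$ symmetry of $H_n$ (which turns ``previous vertex'' into ``next vertex''), one can decide in $O(\log n)$ operations whether ${\bf q}_0$ is a vertex of $\conv{H_n\cap\ZZ^2}$ — in which case it is the answer — or lies in the relative interior of an edge, or in the interior of the polygon. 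In the latter two cases I would iterate the forward routine starting from ${\bf q}_0$: each call replaces the current lattice point by the one reached along the steepest admissible direction; the produced points move strictly rightward, they reach the boundary, and thereafter run through consecutive vertices of $\conv{(H_n\cap\ZZ^2)\cap\{x\ge x_1\}}$. We stop as soon as a produced point is recognized (by the same forward/backward test) to be a vertex of $\conv{H_n\cap\ZZ^2}$; that point is the right endpoint of the crossing edge, hence the answer.

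\emph{The main obstacle} is to bound the number of iterations by $O(\log n)$: one must control how many vertices of the restricted hull $\conv{(H_n\cap\ZZ^2)\cap\{x\ge x_1\}}$ can precede the first vertex it shares with $\conv{H_n\cap\ZZ^2}$. This is where the estimate ``${\bf q}_0$ is within vertical distance $1$ of the boundary'' is used: the steepest directions chosen along the iteration behave like the intermediate fractions generated by the Euclidean algorithm — mirroring the cone‑narrowing that is already internal to the next‑vertex routine — so that the relevant numerators and denominators grow geometrically and only $O(\log n)$ of them can occur before the chain meets $\conv{H_n\cap\ZZ^2}$. Together with the $O(\log n)$ cost per call, this yields the claimed $O(\log^2 n)$ bound. (A finer analysis may well reduce the number of calls to $O(1)$, but $O(\log^2 n)$ is all we need.)
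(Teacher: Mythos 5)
Your reduction and starting point match the paper's: begin from the bottommost lattice point $\mathbf q_0=(x_1,\lceil n/x_1\rceil)$ of the first lattice column with $x\ge x_{start}$, observe that its vertical distance to the lower boundary is $<1$, and iterate $\textmono{nextpt}$ until you land on a genuine vertex of $\conv{H_n\cap\ZZ^2}$. This is exactly the strategy of Corollary~\ref{cor:find_next_vertex_from_non_vertex_log} and Theorem~\ref{thm:next_vertex}, and your convexity observations in the set-up are fine.

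However, the step you yourself flag as the main obstacle — bounding the number of $\textmono{nextpt}$ calls by $O(\log n)$ — is where the argument has a genuine gap. You assert that ``the steepest directions chosen along the iteration behave like the intermediate fractions generated by the Euclidean algorithm,'' so that $O(\log n)$ steps suffice, but you never prove any contraction property of a single call to $\textmono{nextpt}$, and the Euclidean-algorithm analogy on its own is not a proof (there is no obvious reason the sequence of minimum-slope directions must march through continued-fraction convergents of the slope of the edge $e$ below $\mathbf q_0$, nor that their denominators double). What the paper proves instead is a clean quantitative lemma (Proposition~\ref{prop:find_next_vertex_from_non_vertex}): if $\mathbf p$ lies in the interior of $\conv{H_{n,\mathbf p+\Lambda}}$, $e_{\mathbf p}$ is the edge vertically below it, and $\mathbf q=\Nextpt(\mathbf p)$, then $\operatorname{dist}(\mathbf q,e_{\mathbf p})<\tfrac12\operatorname{dist}(\mathbf p,e_{\mathbf p})$ and $\mathbf q_x\le \mathbf v_x$ (so the relevant edge does not change). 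Combined with the initial distance $<1$ and the fact that the smallest possible nonzero vertical distance from a lattice point to the lattice line through $e$ is $1/x_e>1/(n+1)$, this forces the iteration to land on $e$ after at most $\log_2(n+1)$ calls; one further call yields the right endpoint $\mathbf v$. This halving lemma is the ingredient your sketch is missing, and without it the $O(\log^2 n)$ bound does not follow.

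Two smaller remarks. First, your in-loop ``forward/backward test'' to decide whether the current point is a vertex of the full hull is left unspecified; the paper sidesteps the need for any such per-step test by simply performing $\lceil\log_2(n+\det\Lambda)\rceil$ forward calls, taking the last produced point (which is guaranteed to be a vertex), and then backtracking with $\textmono{nextpt}$ applied after reflecting across $\{x=y\}$, matching the backward trajectory against the stored forward one. Second, be careful with the claim that the iterates ``run through consecutive vertices of $\conv{(H_n\cap\ZZ^2)\cap\{x\ge x_1\}}$'': this is true (each $\mathbf q_i$ is a vertex of that restricted hull, and $\Nextpt$ advances along it), but it is not what bounds the iteration count — that hull may a priori have many more vertices than $\log n$ before it merges with the full hull, and Proposition~\ref{prop:find_next_vertex_from_non_vertex} is precisely what rules that out.
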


This is useful in several ways: for example, it allows for an easy parallelization of Corollary~\ref{corollary:standard_hyperbola_convex_hull_alg} by asking different processors to start searching at different values of $x_{start}$.
Observe also that the space complexity of Strassen's algorithm and its variants is of the same order $O(n^{1/4})$ as its time complexity, while ours has space complexity in $O(1)$ (in the arithmetic model, linear on the input size in the bit model).
In this respect, our algorithm is comparable to Lehman's~\cite{Lehman1974FactLargeInt}.

%%%%%%%%%%%%%%%%%%%%%%%%%%%%%%%%%%%%%%%%%%%%%%%%%%%%%%%%%%%%%%%%%%%%%%%%%%%%%%%%%
\section{The number of vertices}\label{sec:number-of-vertices}

\subsection{Reduction to the standard hyperbola}
\label{subsec:reduction-standard-hyperbola}
We here show that there is no loss of generality in assuming our
hyperbola to have asymptotes parallel to the coordinate axes, at the expense of changing
$\ZZ^2$ to a superlattice of it.

\begin{proposition}
    \label{prop:transform}
    Consider the hyperbola of Eq.~\eqref{eq:hyp-intro}, with $\Delta \coloneqq \sqrt{b^2 - 4ac} > 0$
    and $a, b, c, \Delta \in \ZZ$.
    Then, there is an affine transformation in $\RR^2$ sending it to the hyperbola
    $\{xy = \abs{n}/\Delta^2\}$ and sending $\ZZ^2$ to ${\bf p} + \Lambda$, where ${\bf p} \in \RR^2$ and $\Lambda$ is a
    superlattice of $\ZZ^2$ of index $\Delta$ (that is, of determinant $\Delta^{-1}$).
    Moreover, if $\gcd(a,b,c)=1$ then $(1, 0)$ and $(0, 1)$ are primitive vectors of $\Lambda$.
\end{proposition}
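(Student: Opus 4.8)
The plan is to diagonalize the quadratic form $q(x,y) = ax^2 + bxy + cy^2$ over $\RR$ by factoring it into two linear forms whose directions are the asymptotic directions of the hyperbola, and then use these linear forms as new coordinates. Concretely, since $\Delta = \sqrt{b^2-4ac}$, we can write
\[
    a(x-x_0)^2 + b(x-x_0)(y-y_0) + c(y-y_0)^2 = \ell_1(x,y)\,\ell_2(x,y),
\]
where $\ell_1, \ell_2$ are the two linear forms vanishing on the two asymptotes; their coefficients involve $a,b,c$ and $\Delta$, and are rational (indeed integer, after clearing denominators appropriately) by the standing assumption that $a,b,c,\Delta \in \ZZ$. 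I would choose the scaling of $\ell_1$ and $\ell_2$ so that $\ell_1\ell_2 = \Delta\cdot q$ up to sign, which is exactly what makes the product of the two linear forms equal $\pm\Delta\,n$ on the hyperbola; dividing by $\Delta^2$ then turns the right-hand side into $|n|/\Delta^2$, matching the target hyperbola $\{XY = |n|/\Delta^2\}$ after possibly swapping $\ell_1 \leftrightarrow \ell_2$ to fix the sign. The affine map is then $(x,y) \mapsto (X,Y) = \big(\ell_1(x-x_0,y-y_0)/\Delta,\ \ell_2(x-x_0,y-y_0)/\Delta\big)$, which is affine and invertible because the asymptotes are not parallel ($\Delta>0$).

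Next I would track where $\ZZ^2$ goes. Write the linear part of the map as a matrix $M \in \tfrac{1}{\Delta}\ZZ^{2\times 2}$, i.e. $M = \tfrac1\Delta M'$ with $M' \in \ZZ^{2\times 2}$; the translation by $(x_0,y_0)$ just contributes the vector ${\bf p}$ (note $x_0,y_0$ need not be rational, so ${\bf p}\in\RR^2$ in general). Then $M\ZZ^2 = \Lambda$ is a lattice, and its covolume is $|\det M| = |\det M'|/\Delta^2$. So the key computation is $\det M' = \pm\Delta$: this is the determinant of the coefficient matrix of the two linear forms $\ell_1,\ell_2$, and it equals (up to sign) the product of the leading terms times the difference of the asymptote slopes — which, from Eq.~\eqref{eq:slopes}, is $\Delta/c$ scaled back by the $c$ in the coefficients, giving exactly $\pm\Delta$. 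Hence $\Lambda$ has covolume $\Delta/\Delta^2 = \Delta^{-1}$, i.e. $\Lambda \supseteq \ZZ^2$ with index $\Delta$, as claimed.

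For the last sentence, I would argue in the other direction: $\Lambda = M\ZZ^2$ contains $(1,0)$ and $(0,1)$ as primitive vectors iff $M^{-1}(1,0)$ and $M^{-1}(0,1)$ are primitive in $\ZZ^2$, i.e. iff the columns of $\Delta M^{-1} \in \ZZ^{2\times 2}$ are primitive integer vectors. But $\Delta M^{-1} = \pm M'^{\mathrm{adj}}$ (the adjugate of $M'$, since $\det M' = \pm\Delta$), whose columns are (permuted, signed) rows of $M'$. The gcd of the entries in a row of $M'$ is, up to the global normalization, $\gcd(a,b,c)$ — each $\ell_i$ has coefficients that are integer combinations of $a,b,c,\Delta$ chosen precisely so that the content of the pair of linear forms is governed by $\gcd(a,b,c)$ — so $\gcd(a,b,c)=1$ forces these columns to be primitive, giving the claim.

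The main obstacle I anticipate is purely bookkeeping: pinning down the exact integer coefficients of $\ell_1,\ell_2$ (there is a genuine choice of normalization, and one must be careful because $2c$ or $2a$ may appear in denominators when $\Delta$ and $b$ have the same parity issues), and then verifying cleanly that $\det M' = \pm\Delta$ and that the content statement matches $\gcd(a,b,c)$ rather than, say, $\gcd(a,b,c,\Delta)$ or $\gcd(2a,b,2c)$. I would handle this by writing the form as $q = \tfrac{1}{4a}\big((2ax+by)^2 - \Delta^2 y^2\big) = \tfrac{1}{4a}(2ax+(b-\Delta)y)(2ax+(b+\Delta)y)$ when $a\neq 0$ (and symmetrically using $c$ when $c\neq 0$, and a direct argument when $a=c=0$), then rescaling both factors uniformly to clear the $\tfrac{1}{4a}$ and land on a matrix $M'$ whose determinant is manifestly $\pm\Delta$; the primitivity of the columns of its adjugate then reduces to the elementary fact $\gcd(2a,\,b-\Delta,\,2a,\,b+\Delta) \sim \gcd(a,b,c)$ after accounting for the scaling, using $\Delta^2 = b^2-4ac$.
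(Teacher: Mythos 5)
Your strategy is exactly the one the paper uses: factor the quadratic form $q$ into two linear forms, use their rescaled versions as new coordinates, read off the covolume from a determinant, and get primitivity from the adjugate. (The paper's vectors ${\bf v}_1',{\bf v}_2'$ are precisely the directions annihilated by those forms, and its matrix $\tfrac1\Delta\begin{pmatrix}a_1 & c_2\\ a_2 & c_1\end{pmatrix}$ is your $M$.) So this is not a different route.

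What you dismiss as ``purely bookkeeping,'' however, is where the proof actually lives, and your sketch of it has concrete gaps. First, the scaling is inconsistent: with $\ell_1\ell_2=\Delta q$ and $X=\ell_1/\Delta$, $Y=\ell_2/\Delta$ you get $XY=q/\Delta=n/\Delta$, not $n/\Delta^2$. What you need (and what ``clearing the $\tfrac1{4a}$'' actually produces) is an \emph{integer} factorization $q=\ell_1\ell_2$; such a factorization automatically has coefficient determinant $\pm\Delta$. Second, that integer factorization cannot come from rescaling $2ax+(b\pm\Delta)y$ ``uniformly'' --- the split of $4a$ between the two factors must be asymmetric and is not obvious. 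The paper first shows $b\pm\Delta$ are both even (same parity since $2b$ is even, not both odd since $(b+\Delta)(b-\Delta)=4ac$), then sets $a_1:=\gcd\bigl(a,\tfrac{b-\Delta}2\bigr)$, $a_2:=a/a_1$, $c_1:=\tfrac{(b-\Delta)/2}{a_1}$, $c_2:=\tfrac{(b+\Delta)/2}{a_2}$; the identity $\tfrac{b-\Delta}{2}\cdot\tfrac{b+\Delta}{2}=ac$ makes $c_2\in\ZZ$ and forces $c=c_1c_2$, giving $q=(a_1x+c_2y)(a_2x+c_1y)$ with determinant $a_1c_1-a_2c_2=-\Delta$. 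Third, for primitivity the adjugate idea is right (columns of $M^{-1}=\pm(M')^{\mathrm{adj}}$ must be primitive in $\ZZ^2$; note $M^{-1}$ is already integer because $\det M'=\pm\Delta$, so your ``$\Delta M^{-1}$'' is a slip), but the conditions needed are $\gcd(a_1,c_2)=\gcd(a_2,c_1)=1$, and your closing ``elementary fact'' $\gcd(2a,b-\Delta,2a,b+\Delta)\sim\gcd(a,b,c)$ is garbled and does not deliver them. The actual argument requires the multiplicative relations $a=a_1a_2$, $c=c_1c_2$, $b=a_1c_1+a_2c_2$: a common prime of, say, $a_1$ and $c_2$ divides $a$, $c$ and $b$, hence divides $\gcd(a,b,c)$. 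None of this is visible from a gcd of the raw coefficients $2a$, $b\pm\Delta$ alone.
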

\begin{proof}
    Without loss of generality, we assume that $n > 0$.

    We treat first the case $ac = 0$.
    Assuming without loss of generality $c = 0$, the asymptotes of our
    hyperbola $h$ are parallel to the integer vectors
    \[
        {\bf v}_1 \coloneqq (b, -a) \qquad
        {\bf v}_2 \coloneqq (0,  1).
    \]
    Consider the linear transformation $f: \RR^2 \to \RR^2$ with matrix
    \[
        \frac 1 b
        \begin{pmatrix}
            1 & 0 \\
            a & b
        \end{pmatrix},
    \]
    which has $f({\bf v}_1) = {\bf e}_1$, $f({\bf v}_2) = {\bf e}_2$.
    It sends $h$ to the hyperbola
    \[
        \{(x - x_0')(y - y_0') = \tfrac{n}{b^2}\}
    \]
    where $(x_0', y_0') = f(x_0, y_0)$,
    and it sends the lattice $\ZZ^2$ to the superlattice $\Lambda \supseteq \ZZ^2$ generated by
    $f({\bf e}_1)=\tfrac1b(1, a)$ and $f({\bf e}_2)=(0, 1)$, with $\det \Lambda = b^{-1}$.
    If $\gcd(a, b, c) = 1$ and $c=0$, then $\gcd(a, b)=1$ and ${\bf v}_1$ and ${\bf v}_2$ are primitive vectors of $\ZZ^2$.
    Hence, their images, $(1, 0)$ and $(0, 1)$ are primitive as well in $\Lambda = f(\ZZ^2)$.
    If we now translate by ${\bf p} \coloneqq - (x_0', y_0')$, we have the stated result.

    We now look at the case $ac \ne 0$.
    Now the asymptotes of $h$ are parallel to
    \[
        {\bf v}_1 \coloneqq (-2c,   b + \Delta), \qquad
        {\bf v}_2 \coloneqq ( 2c, -(b - \Delta)),
    \]
    where $\Delta, b, c \in \ZZ$ by hypothesis.
    However, these vectors are not primitive
    since $b + \Delta$ and $b - \Delta$ must be both even: they have the same parity
    since their sum is even, and they cannot be both odd since $(b + \Delta)(b - \Delta) = 4ac$.

    Moreover, $\left(\tfrac{b + \Delta}2\right) \left(\tfrac{b - \Delta}2\right) = ac$ implies that we can write
    $c = c_1c_2$ and $a = a_1a_2$ with $a_1c_1 = \tfrac{b - \Delta}2$ and $a_2c_2 = \tfrac{b + \Delta}2$.
    This decomposition may not be unique, but one that works is to take $a_1=\gcd(a, \tfrac{b - \Delta}2)$ and then derive $a_2,c_1,c_2$ from that.
    Thus, we can reduce the asymptote vectors to the following, still integer, vectors:
    \[
        {\bf v}_1' \coloneqq \tfrac1{2c_2}{\bf v}_1 = \big(-c_1,  a_2\big), \qquad
        {\bf v}_2' \coloneqq \tfrac1{2c_1}{\bf v}_2 = \big( c_2, -a_1\big).
    \]
    The linear transformation $f$ which maps $\left\{{\bf v}_1', {\bf v}_2'\right\}$ to the canonical basis of
    $\ZZ^2$ is given 
    by the matrix
    \[
        \frac{1}{\Delta}
        \begin{pmatrix}
            a_1 & c_2 \\
            a_2 & c_1
        \end{pmatrix}.
    \]
    This transformation maps $\ZZ^2$ to a superlattice $\Lambda \supseteq \ZZ^2$, now with
    determinant $\det \Lambda = \tfrac{1}{\Delta}$, and it sends $h$ to the hyperbola
    \[
        \left\{(x - x_0')(y - y_0') = \tfrac{n}{\Delta^2}\right\},
    \]
    where again $(x_0', y_0') = f(x_0, y_0)$.
    Translating again by ${\bf p} \coloneqq -(x_0', y_0')$, we have the stated result. 

    To show primitivity in this case observe that $\gcd(c_1, a_2)$ divides $a$, $c$, $\tfrac{b + \Delta}2$
    and $\tfrac{b - \Delta}2$.
    Thus, it also divides $b$, so it divides $\gcd(a, b, c)$.
    The same can be said about $\gcd(c_2, a_1)$.
    Hence, $\gcd(a, b, c) = 1$ implies that ${\bf v}_1'$ and ${\bf v}_2'$ are primitive vectors in $\ZZ^2$, and so
    are their images $(1, 0)$ and $(0, 1)$ in $\Lambda$.
\end{proof}

Hence, proving properties of the integer convex hull of general hyperbolas
is equivalent to proving them for translated superlattice convex hulls of the standard
hyperbola.
With this in mind we introduce the following notation. With $H_n$ as in Eq.~\eqref{eq:H_n},
for each lattice $\Lambda \subset \RR^2$ and point ${\bf p} \in \RR^2$ we denote
\begin{equation}
\label{general_hyper}
    H_{n,{\bf p}+\Lambda} \coloneqq H_n \cap ({\bf p}+\Lambda).
\end{equation}

\subsection{Upper bound}\label{subsec:upper-bound}

In this section we prove the upper bound part of Theorem~\ref{thm:main-new} on the number of vertices of the hyperbola.
The affine transformation of Proposition~\ref{prop:transform} translates Theorem~\ref{thm:main-new} to the following,
more specific, result which includes an explicit value for the constant $C$.

\begin{theorem}\label{thm:upper-bound-superlattice}
    Let $n \in \RR_+$, ${\bf p} \in \RR^2$ and $\Lambda$ be a superlattice of the integer lattice,
    such that $(0, 1)$ and $(1, 0)$ are primitive vectors of $\Lambda$.
    Define
    $
    m \coloneqq 
        \tfrac1{\det \Lambda} \max\left\{n, 2\right\}.
    $

    Then, the number of vertices of $\conv{H_{n,{\bf p}+\Lambda}}$ is at most
    \[
    \left(\frac{2^8 \pi^2 (k-1)^2}{k}\right)^{\frac13} m^{\frac13} (\log_k{m} + 2) \in O(m^{1/3} \log m),
    \]
    for any real number $k \ge 2$.
\end{theorem}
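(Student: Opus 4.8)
The plan is to cover the portion of the hyperbola $h_n$ that can carry vertices of $\conv{H_{n,{\bf p}+\Lambda}}$ by a logarithmic number of axis-parallel rectangles of controlled area, and then apply Andrews' theorem (Lemma~\ref{lemma:andrews}) to bound the number of vertices inside each rectangle. First I would observe that, by the symmetry $(x,y)\mapsto(y,x)$ of $H_n$, it suffices to bound the vertices with $x\le\sqrt n$ (equivalently $y\ge\sqrt n$), and that the relevant part of the curve where $1\le x\le\sqrt n$ is what we need to cover; vertices with very small $x$-coordinate (say $x<1$) contribute at most a constant since $(0,1)$ primitive in $\Lambda$ forces the lattice lines $x=\text{const}$ to be spaced by at least... — more precisely I would use primitivity of $(1,0)$ and $(0,1)$ to guarantee that in any vertical or horizontal strip of width $1$ the lattice $\mathbf{p}+\Lambda$ behaves like $\ZZ^2$ in that direction, so that counting is not distorted.

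The key construction is a geometric dyadic-type decomposition indexed by $k\ge2$: partition the range $[x_0,\sqrt n]$ (with $x_0$ a suitable lower cutoff, essentially $\max\{1,\,n/(\text{something})\}$ scaled) into $O(\log_k m)$ consecutive intervals $[x_0 k^{i}, x_0 k^{i+1}]$. Over such an interval the hyperbola $y=n/x$ ranges over a $y$-interval of length comparable to $n/(x_0 k^i)$, so the bounding rectangle $R_i$ has area $\asymp (x_0 k^i)(k-1)\cdot\frac{n(k-1)}{x_0 k^i}\cdot\frac1k = n(k-1)^2/k$ — crucially independent of $i$. Each such rectangle, intersected with the superlattice $\mathbf p+\Lambda$, is an affine image of a region containing $O(\operatorname{area}/\det\Lambda)$ lattice points arranged in a lattice polygon; Andrews' bound then gives $O\!\big((\operatorname{area}/\det\Lambda)^{1/3}\big) = O\!\big((n(k-1)^2/(k\det\Lambda))^{1/3}\big)$ vertices per rectangle. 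Multiplying by the number $O(\log_k m)$ of rectangles, and tracking the explicit constants coming from Andrews (where the $\pi^2$ and the power of $2$ enter) plus the two extra rectangles needed to cover the parts near $x=x_0$ and near $x=\sqrt n$ (hence the $+2$ in $\log_k m+2$), yields the stated bound $\big(2^8\pi^2(k-1)^2/k\big)^{1/3} m^{1/3}(\log_k m+2)$. The role of $m=\max\{n,2\}/\det\Lambda$ rather than just $n/\det\Lambda$ is to absorb the degenerate small-$n$ case and to make $\log_k m\ge 0$.

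The main obstacle, and the step requiring the most care, is twofold. First, one must correctly handle the superlattice: Andrews' theorem is stated for $\ZZ^2$, so I would apply the linear map taking a basis of $\Lambda$ to the standard basis, note that it sends $\conv{R_i\cap(\mathbf p+\Lambda)}$ to a lattice polygon of area $\operatorname{area}(R_i)/\det\Lambda$, and invoke Lemma~\ref{lemma:andrews} there — being careful that every vertex of $\conv{H_{n,{\bf p}+\Lambda}}$ lying in $R_i$ is also a vertex of $\conv{R_i\cap(\mathbf p+\Lambda)}$, which holds because vertices of the big hull are in convex position. Second is the bookkeeping of cutoffs and constants: choosing $x_0$ so that the leftmost rectangle still has area $\asymp n(k-1)^2/k$ while ensuring $x_0\ge 1$ (so the superlattice/$\ZZ^2$ comparison is valid), and verifying that the number of dyadic steps from $x_0$ to $\sqrt n$ is at most $\tfrac12\log_k(n/x_0^2)\le\log_k m$; the "$+2$" is the slack that lets this inequality and the two boundary rectangles go through cleanly. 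Optimizing over $k$ (e.g.\ $k=2$ giving the $10.81$ constant, a better $k$ giving $\simeq 8.205$) is then a one-variable calculus exercise deferred to Remark~\ref{rmk:upper-bound-with-constant}.
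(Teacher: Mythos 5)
Your high-level strategy --- cover the hyperbola by $O(\log_k m)$ axis-parallel boxes of constant normalized area and apply Andrews' bound (in Rabinowitz's explicit form) to each --- is exactly the paper's approach, and your computation that the boxes $R_i=[k^i,k^{i+1}]\times[n/k^{i+1},n/k^i]$ have normalized area $\tfrac{(k-1)^2}{k}\tfrac{n}{\det\Lambda}$, independent of $i$, is correct. But there is a genuine gap in the covering step: a vertex ${\bf q}$ with ${\bf q}_x\in[k^i,k^{i+1})$ need not lie inside $R_i$, because $R_i$ only bounds the curve from below, while ${\bf q}_y$ may exceed $n/k^i$. Your sketch makes no provision for such escaping vertices. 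The paper closes this using that $(0,1)$ and $(1,0)$ are lattice vectors: if the vertex ${\bf q}$ lies above $R_i$, then ${\bf q}-(0,1)\notin H_n$ forces ${\bf q}_y<n/k^i+1$, and then ${\bf q}-(1,0)\notin H_n$ forces ${\bf q}_x<k^i+1$; hence ${\bf q}$ lies in the unit square $U_i\coloneqq(k^i,n/k^i)+[0,1]^2$. The covering actually used is by the pairs $(R_i,U_i)$, and the extra $U_i$ are what give one of the factors of $2$ in the constant $2^8$. Without this (or an equivalent) your bound does not follow as written, and a naive fix of enlarging $R_i$ by a unit strip on top would destroy the crucial $i$-independence of the area.

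Two secondary issues. First, your claim that a vertex of $\conv{H_{n,{\bf p}+\Lambda}}$ lying in $R_i$ is automatically a vertex of $\conv{R_i\cap({\bf p}+\Lambda)}$ is not true: $R_i$ contains lattice points below the hyperbola that can swallow ${\bf q}$ into the interior of that hull. The correct polygon to which Andrews should be applied is $\conv{\vertices{H_{n,{\bf p}+\Lambda}}\cap R_i}$, whose vertices are lattice points by construction and whose area is $\le\tfrac12\area{R_i}$ because the vertices form an $x$-decreasing, $y$-increasing chain and hence lie in a triangle of their bounding box. Second, the detour of mapping $\Lambda$ back to $\ZZ^2$ is unnecessary: Rabinowitz's corollary (used with $\kappa_2=2\pi^{2/3}$) applies directly once area is normalized with respect to $\Lambda$, which is exactly where the $\tfrac1{\det\Lambda}$ in $m$ enters.
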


Before proving this theorem let us show how it implies the upper bound of Theorem~\ref{thm:main-new}.

\begin{proof}[Proof of Theorem~\ref{thm:main-new}, upper bound.]
    Proposition~\ref{prop:transform} implies that counting the number of vertices in our original hyperbola $h$ of discriminant $\Delta$ is equivalent to counting them in the hyperbola $\{xy=n/\Delta^2\}$ with respect to a translation of a superlattice $\Lambda$ of determinant $\det{\Lambda} = 1/\Delta$.
    The assumption made in Theorem~\ref{thm:upper-bound-superlattice} that $(0, 1)$ and $(1, 0)$ are primitive in $\Lambda$ is, by Proposition~\ref{prop:transform}, equivalent to $\gcd(a, b, c) = 1$ in the original hyperbola $h$ of Theorem~\ref{thm:main-new}.
    This is no loss of generality since dividing the equation of $h$ by any common factor goes in our favour: it decreases $\Delta$ and it keeps $n/\Delta$ unchanged.
    Hence, we can apply Theorem~\ref{thm:upper-bound-superlattice} with
    \[
        m = \Delta \max\left\{n/\Delta^2, 2\right\} = \max\left\{n/\Delta, 2\Delta\right\}.
        \qedhere
    \]
\end{proof}

\begin{remark}[On the parameter $k$ and the constant $C$]
    \label{rmk:upper-bound-with-constant}
    The constant multiplying $m^{1/3}\log_2 m$ in Theorem~\ref{thm:upper-bound-superlattice} (hence the constant $C$ in Theorem~\ref{thm:main-new})
    is
    \[
        C \coloneqq \left(2^8 \pi^2\right)^{\frac13} \frac{(k-1)^{2/3}}{k^{1/3} \log_2 k} \simeq
        13.620 \, \frac{(k-1)^{2/3}}{k^{1/3} \log_2 k} = 13.620\, f(k),
    \]
    where we have separated in a function $f(k) \coloneqq \tfrac{(k-1)^{2/3}}{k^{1/3} \log_2 k}$ the factors that depend on the parameter $k \ge 2$ of the statement.
    This $k$, which we can choose, is the ratio of a geometric sequence used in the proof (see~\eqref{points_hyper} and Lemma~\ref{lemma:upper-bound:rectangles}).
    The most natural value for it would be $k = 2$, which gives
    \[
        f(2) = 2^{-\frac13} \simeq 0.794
        \quad\Rightarrow\quad
        C \simeq 13.620 \cdot 2^{-\frac13} = 10.810.
    \]
    But the minimum of $f(k)$ is obtained for $k \simeq 13.140$ and gives
    \[
        f(13.140) \simeq 0.602
        \quad\Rightarrow\quad
        C \simeq 13.620 \cdot f(13.140) \simeq 8.205.
    \]
\end{remark}
\medskip

The remainder of this section is aimed at proving Theorem~\ref{thm:upper-bound-superlattice}.
The general idea is to cover the set of lattice points that can potentially be vertices of $\conv{H_{n,{\bf p}+\Lambda}}$ by a logarithmic number of rectangles with controlled volume, and then apply the following general statement of Andrews~\cite{Andrews1963ALB} about the number of vertices of a lattice polytope with bounded volume:

\begin{lemma}[Andrews, 1963]\label{lemma:andrews}
    If $P \subseteq \RR^d$ is a polytope with integral vertices and nonempty
    interior, then:
    \begin{equation*}
        \cardinal{\vertices{P}} < \kappa_d\,\vol{P}^{\frac{d - 1}{d + 1}}.
    \end{equation*}

    Where $\kappa_d$ is a constant depending on the dimension. 
\end{lemma}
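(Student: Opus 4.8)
The plan is the classical one: reduce the count of vertices to a statement about the total ``size'' of the edge vectors of $P$, and then exploit the fact that integral vectors with pairwise distinct directions cannot all be short.

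First I would observe that the statement is only substantive for polytopes with many vertices: since every $d$-dimensional lattice polytope has volume at least $1/d!$, enlarging $\kappa_d$ if necessary makes the inequality hold automatically whenever $\cardinal{\vertices{P}}$ is below any fixed threshold. So assume $\cardinal{\vertices{P}}$ is large and treat first the planar case $d=2$, which already contains the whole idea. Traverse $\partial P$ counterclockwise and list its edge vectors $e_1,\dots,e_V\in\ZZ^2\setminus\{{\bf 0}\}$; they have pairwise distinct directions and $\sum_i e_i={\bf 0}$. Split the directions into the four closed quadrants; one of them, say the first, contains at least $V/4$ of the $e_i$, and these form a \emph{contiguous} arc $f_1,\dots,f_m$ of $\partial P$ (here $m\ge V/4$ and the $f_k$ are listed in order of slope). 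Let $R$ be the convex region cut off from $P$ by the chord joining the two endpoints of this arc. Then $R\subseteq P$ is a convex lattice polygon and, triangulating it as a fan from one endpoint of the arc,
\[
    \vol{P}\;\ge\;\vol{R}\;=\;\tfrac12\sum_{1\le i<j\le m}\det(f_i,f_j),
\]
where every summand is a positive integer, hence $\ge 1$, because $f_i$ and $f_j$ are integral, independent, and in slope order.

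The heart of the proof is to upgrade the trivial bound $\det(f_i,f_j)\ge 1$ (which only yields $\vol{P}\gtrsim V^2$, not enough) to $\sum_{i<j}\det(f_i,f_j)\ge c\,m^3$ for an absolute constant $c>0$; this is where lattice-point counting enters. Since $\ZZ^2$ has only $O(r^2)$ vectors of norm at most $r$, the $k$-th shortest of $f_1,\dots,f_m$ has norm $\gtrsim\sqrt k$, so a constant fraction of the $f_k$ have norm $\gtrsim\sqrt m$; and for two such long vectors one also has $\det(f_i,f_j)=\|f_i\|\,\|f_j\|\sin\theta_{ij}\gtrsim m\,\theta_{ij}$, where $\theta_{ij}$ is the angular gap. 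A short computation then balances the two regimes---angular gaps large (good for the second estimate) against slopes forced to be almost equal (which, again by counting, forces the vectors to be long)---and produces the cubic lower bound; plugging it back in gives $V\le\kappa_2\,\vol{P}^{1/3}$. I expect this balancing step to be the main obstacle: it is exactly the point where the extremal configurations---convex chains approximating a hyperbola, as throughout this paper---have to be confronted head-on.

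For general $d$ the strategy is unchanged in spirit. By a pigeonhole over the $2^d$ sign-orthants of a generic direction one isolates a large subfamily of vertices of $P$ whose local cones assemble into a convex lattice ``staircase'' polytope contained in $P$ whose volume is a sum of $d\times d$ determinants of integral edge vectors; one then bounds that volume below by $\Omega\bigl(N^{(d+1)/(d-1)}\bigr)$ using that $\ZZ^d$ has $\Theta(r^d)$ vectors of norm at most $r$. Setting up the staircase and carrying the determinant estimate through in arbitrary dimension is considerably more laborious, and that is precisely the content of Andrews' original argument in~\cite{Andrews1963ALB}, which we therefore quote rather than reprove.
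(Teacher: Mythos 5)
The first thing to note is that the paper does not prove this lemma at all: it is stated as a quoted result with a citation to Andrews' 1963 paper, and the only version actually used later is the planar case, imported with an explicit constant from Rabinowitz (Corollary~\ref{coro:Rabinowitz}). So declining to reprove Andrews in full generality is consistent with how the paper treats the statement. Your planar setup is also sound as far as it goes: the reduction to large $\cardinal{\vertices{P}}$, the pigeonholing of edge vectors into a quadrant, the identity $\vol{R}=\tfrac12\sum_{i<j}\det(f_i,f_j)$ for the cap cut off by the chord, and the positivity and integrality of each $\det(f_i,f_j)$ are all correct.

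However, judged as a proof your proposal has a genuine gap exactly where you flag it: the lower bound $\sum_{i<j}\det(f_i,f_j)\ge c\,m^3$ is asserted, not proved, and the ``short computation'' you invoke does not go through as described. Knowing that a constant fraction of the $f_k$ have norm $\gtrsim\sqrt m$ and that $\det(f_i,f_j)\gtrsim m\,\theta_{ij}$ is not enough, because nothing prevents all of those long vectors from lying in an angular sector of total width $o(1)$ (e.g.\ vectors close to $(N,k)$ with $N$ huge), so the angular-gap estimate alone can produce an arbitrarily small bound; one must make quantitative the dichotomy that a tiny angular gap between distinct integer directions forces the corresponding denominators, hence norms and determinants, to be large, and then balance the two regimes (say dyadically over gap sizes). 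That balancing is the actual content of the $\Omega(m^3)$ area bound and is missing. Likewise the general-$d$ case is not argued but deferred wholesale to Andrews. In short: as a citation-plus-sketch your treatment matches the paper's (which also only cites the result), but it should not be presented as a proof, since its central quantitative step in the plane is left unestablished and the higher-dimensional case is not addressed at all.
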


%\begin{remark}\label{rmk:kappa_2}
    The optimal constant for dimension $2$ in the above statement, which we denote
    $\kappa_2$, has been studied by several authors.
    B\'ar\'any and Tokushige~\cite{BaranyTokushige2004MinAreaConvNGons}
    show $\kappa_2 \ge \sqrt[3]{54} \simeq 3.780$ and write $\kappa_2$ as the minimum of a finite-dimensional optimization problem that they then try to solve.
    Their computations, although not complete, suggest that
    $\kappa_2$ is likely $\simeq 3.781$.
    Rabinowitz~\cite{Rabinowitz1993On3BoundAreaConvNGon} proves that the area of every
    lattice $n$-gon is at least
    $
        1/8\pi^2, 
    $
    which translates to
\begin{corollary}[Rabinowitz~\cite{Rabinowitz1993On3BoundAreaConvNGon}]
    \label{coro:Rabinowitz}
    If $P \subseteq \RR^2$ is a polygon with integral vertices and nonempty
    interior, then:
    \begin{equation*}
        \cardinal{\vertices{P}} < \kappa_2 \, \area{P}^{\frac13},
    \end{equation*}
    where $\kappa_2 = 2 \pi^{\frac23}$.
\end{corollary}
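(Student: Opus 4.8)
The plan is to derive this corollary as a direct algebraic consequence of Rabinowitz's lower bound on the area. Set $n \coloneqq \cardinal{\vertices{P}}$; since $P$ has nonempty interior it is a genuine two-dimensional polygon, so $n \ge 3$ and Rabinowitz's estimate $\area{P} \ge \tfrac{n^3}{8\pi^2}$ applies. Using the identity $(8\pi^2)^{1/3} = 2\pi^{2/3}$ and taking cube roots, this rearranges to $n \le 2\pi^{2/3}\,\area{P}^{1/3}$, which is the asserted inequality except possibly for the strictness.

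To upgrade ``$\le$'' to the strict ``$<$'' of the statement, I would use that $\area{P}$ is a positive multiple of $1/2$ (triangulate $P$ into primitive lattice triangles, or invoke Pick's theorem), hence rational, whereas $\tfrac{n^3}{8\pi^2}$ is irrational since $\pi^2 \notin \QQ$. Therefore Rabinowitz's inequality $\area{P} \ge \tfrac{n^3}{8\pi^2}$ can never be an equality, so it is in fact strict, and cubing the rearranged form yields $n < 2\pi^{2/3}\,\area{P}^{1/3}$. If the version of Rabinowitz's bound one quotes is already strict, this step is unnecessary.

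The substance here is entirely Rabinowitz's theorem, which we take as a black box; everything else is bookkeeping, and I expect the only real care to be in matching constants and quantifiers. Specifically, one must check that the quoted bound has the clean form $\area{P} \ge n^3/(8\pi^2)$ valid for \emph{all} $n \ge 3$, rather than an asymptotic estimate with an $O(n^2)$ correction valid only for large $n$. In the latter case one would dispatch the finitely many remaining small values of $n$ separately: the crude Pick bound $\area{P}\ge (n-2)/2$ already beats $n^3/(8\pi^2)$ for $n=3,4$, and for the other small $n$ below the threshold one would appeal to the (tabulated) minimal areas of convex lattice $n$-gons. This is the main potential obstacle, and it is a minor one.
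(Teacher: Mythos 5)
Your proof is correct and takes the same route as the paper: the corollary is obtained by rearranging Rabinowitz's lower bound $\area{P} \ge n^3/(8\pi^2)$ on the area of a convex lattice $n$-gon, which the paper simply describes as ``translating'' the bound. Your supplementary argument for strictness (half-integer area by Pick's theorem versus the irrationality of $n^3/(8\pi^2)$) correctly fills in a detail the paper glosses over.
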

% \end{remark}

In order to apply this result we need to cover all vertices of $\conv{H_{n,{\bf p}+\Lambda}}$ by convex bodies.
A first, trivial, way of doing that is as follows:

\begin{lemma}
    \label{lem:square}
    All vertices of $\conv{H_{n,{\bf p}+\Lambda}}$ except for the first and last, are contained in the
    open square
    \[
        \left(\det \Lambda, \tfrac{n}{\det \Lambda}+1\right)^2.
    \]
\end{lemma}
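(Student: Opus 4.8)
The plan is to make the coordinate geometry of the translated lattice ${\bf p}+\Lambda$ explicit and then read off all four coordinate bounds from the convexity of the ``lower--left'' boundary of $K \coloneqq \conv{H_{n,{\bf p}+\Lambda}}$. Write $d \coloneqq \det\Lambda \le 1$. Two features of $\Lambda$ are relevant. First, since $(0,1)$ is a primitive vector of $\Lambda$ it completes to a basis $\{(0,1),(d',\ast)\}$ with $|d'| = \det\Lambda = d$, so the $x$-coordinates of the points of $\Lambda$ are exactly $d\,\ZZ$; symmetrically, primitivity of $(1,0)$ gives that the $y$-coordinates of the points of $\Lambda$ form $d\,\ZZ$ as well. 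Second, $(0,1),(1,0) \in \ZZ^2 \subseteq \Lambda$, so ${\bf p}+\Lambda$ is invariant under the unit translations $(\pm 1, 0)$ and $(0,\pm 1)$; in particular on any vertical line meeting ${\bf p}+\Lambda$ the lattice points have $y$-coordinates meeting every interval $[\alpha,\alpha+1)$, and likewise (with $x$ and $y$ swapped) on horizontal lines. (If $K$ has at most two vertices the statement is vacuous, so assume there are at least three.)

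Next I would set up the boundary. The recession cone of $H_n$, hence of $K$, is $\RR_{\ge 0}^2$; since $(0,1) \in \operatorname{rec}(K)$, for each $a$ in the $x$-projection of $K$ the slice $\{y : (a,y) \in K\}$ is a half-line $[\ell(a),\infty)$, so $K$ is the epigraph of a convex non-increasing function $\ell$ and every vertex of $K$ has the form $(a,\ell(a))$; in particular distinct vertices have distinct $x$-coordinates. Let $x_1$ be the smallest positive element of ${\bf p}_x + d\,\ZZ$, so $x_1 \in (0,d]$. A short check shows the leftmost point of $K$ is the vertex $v_1 = (x_1,\ell(x_1))$ (the bottom of the vertical unbounded edge of $K$), the unique vertex with that $x$-coordinate. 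Hence any other vertex has $x$-coordinate $> x_1$, and being of the form ${\bf p}_x + k d$, $k\in\ZZ$, it is at least $x_1 + d > d = \det\Lambda$. By the $x \leftrightarrow y$ symmetry of the hypotheses and of $H_n$, every vertex other than the last has $y$-coordinate $> \det\Lambda$.

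For the upper bounds, consider the vertical line $x = x_1 + d$ (a legitimate value, since $x_1 + d \in {\bf p}_x + d\,\ZZ$). By the unit-translation invariance, this line carries a lattice point $(x_1+d,\tilde y) \in {\bf p}+\Lambda$ with $\tilde y \in \bigl[\tfrac{n}{x_1+d},\ \tfrac{n}{x_1+d}+1\bigr)$; this point lies in $H_{n,{\bf p}+\Lambda} \subseteq K$, so $\ell(x_1+d) \le \tilde y < \tfrac{n}{x_1+d}+1 \le \tfrac{n}{d}+1 = \tfrac{n}{\det\Lambda}+1$. Any vertex $v \ne v_1$ has $x$-coordinate $\ge x_1+d$, so $\ell$ being non-increasing its $y$-coordinate is $\ell(\cdot) \le \ell(x_1+d) < \tfrac{n}{\det\Lambda}+1$; symmetrically every vertex $v \ne v_N$ has $x$-coordinate $< \tfrac{n}{\det\Lambda}+1$. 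Combining with the previous paragraph, every vertex except the first and the last lies in $\bigl(\det\Lambda,\ \tfrac{n}{\det\Lambda}+1\bigr)^2$. The step needing genuine care is the lattice bookkeeping of the first paragraph --- that primitivity of \emph{both} $(0,1)$ and $(1,0)$ pins the $x$- and $y$-coordinate lattices to $d\,\ZZ$ (whereas plain membership $(0,1),(1,0)\in\Lambda$ suffices for the unit-translation invariance) --- together with the check that the leftmost lattice point of $H_{n,{\bf p}+\Lambda}$ is a vertex of $K$; the remainder is routine planar convexity.
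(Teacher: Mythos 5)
Your proof is correct and follows essentially the same route as the paper's: both arguments get the lower bound from the fact that primitivity of $(0,1)$ and $(1,0)$ forces the spacing between consecutive vertical (resp.\ horizontal) lattice lines of $\Lambda$ to equal $\det\Lambda$, and then obtain the upper bound using that the spacing of lattice points within each such line is $1$. The only stylistic difference is in the upper bound: the paper argues per-vertex (each vertex on a line $\{x=a\}$ with $a>\det\Lambda$ must be the bottommost lattice point of $H_n$ there, since the vertex minus $(0,1)$ is not in $H_n$, giving $y < n/a + 1$), whereas you exhibit a single lattice point of $H_{n,{\bf p}+\Lambda}$ on the second vertical line with $y$-coordinate below $n/\det\Lambda+1$ and then invoke monotonicity of the lower-boundary function $\ell$; both are clean, and yours makes the role of the recession cone and of $\ell$ slightly more explicit.
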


\begin{proof}
    The leftmost vertical (lattice) line of $({\bf p}+\Lambda) \cap \left\{x > 0\right\}$ always contains a vertex.
    Furthermore, it always contains exactly one, as the vertical edge of
    $\conv{H_{n,{\bf p}+\Lambda}}$ is unbounded.
    The offset introduced by ${\bf p}$ may cause this vertical line to be
    arbitrarily close to the $y$ axis.
    However, all remaining vertices of $\conv{H_{n,{\bf p}+\Lambda}}$ must be in different
    vertical (lattice) lines of ${\bf p}+\Lambda$, all with $x$-coordinate strictly greater than $\det \Lambda$
    (distance between consecutive lattice lines),
    hence $y$-coordinate strictly less than $n/\det \Lambda +1$.
    The same reasoning for horizontal lines of $\Lambda$ allows us to deduce the statement.
\end{proof}

We are going to use a finer covering of the vertices of $\conv{H_{n,{\bf p}+\Lambda}}$.
For this, let $k \ge 2$ be a real parameter to be specified later (see Remark~\ref{rmk:upper-bound-with-constant}) and
consider the points in the hyperbola
\begin{equation}
\label{points_hyper}
    {\bf r}_i \coloneqq \left(k^i, \frac{n}{k^i}\right) \in h_n,
    \qquad i \in \ZZ,
\end{equation}
where we allow $i$ to be negative.

We define the axes-aligned
rectangles $(R_i)_{i \in \ZZ}$ having consecutive ${\bf r}_i$ as vertices; that is,
\[
    R_i \coloneqq \left[k^i, k^{i+1}\right] \times \left[\frac{n}{k^{i+1}}, \frac{n}{k^i}\right],  \qquad \area{R_i}=\frac{(k - 1)^2}{k} \frac{n}{\det \Lambda}
\]
Observe that every $R_i$ has the same area, equal to $\tfrac{(k - 1)^2}{k} \tfrac{n}{\det \Lambda}$ (where we consider area normalized to the lattice).
Additionally, we also consider the unit squares $U_i \coloneqq {\bf r}_i + [0, 1]^2$.
See Figure~\ref{fig:upper-bound-boxes} for the whole setup.

\begin{figure}[htb]
    \centering
    \includegraphics[width=.8\textwidth]{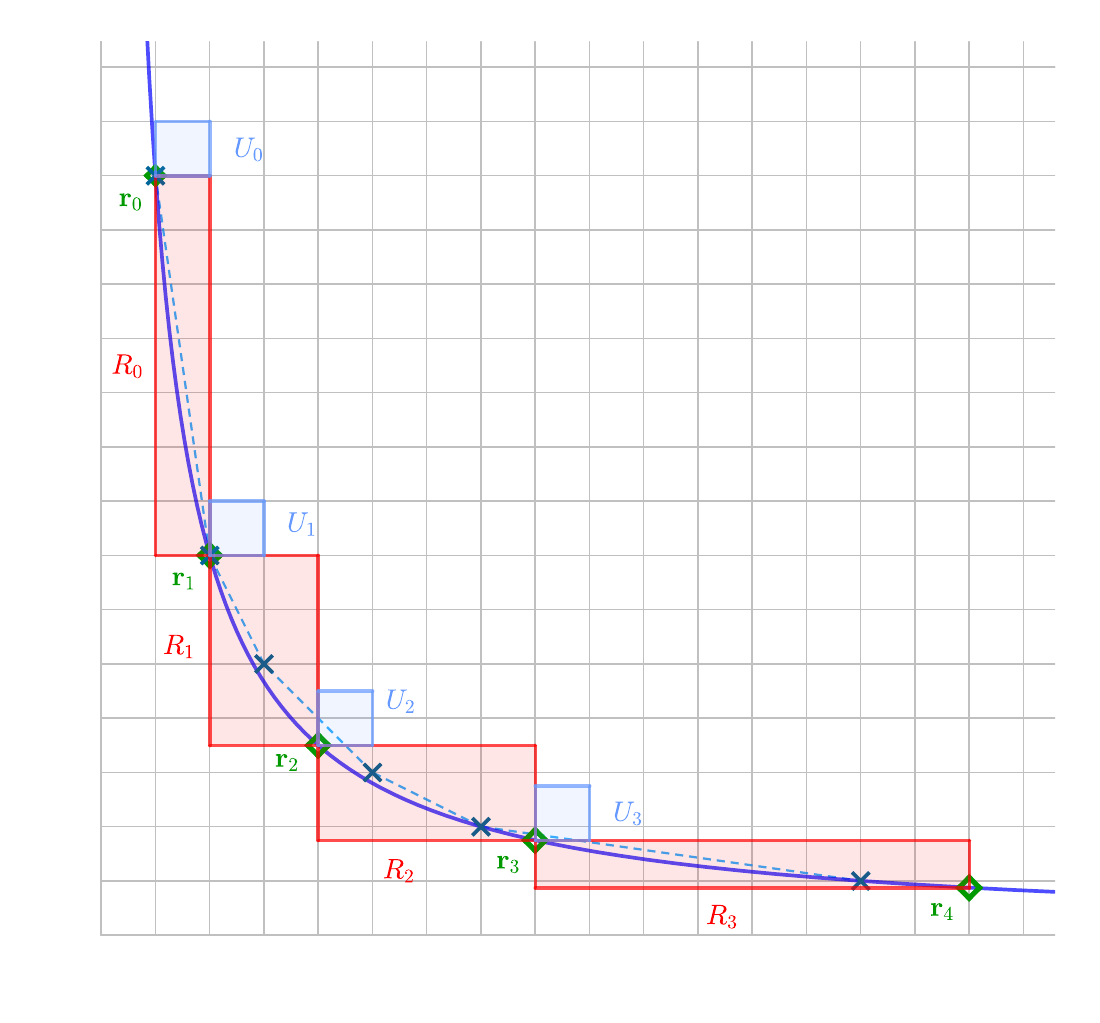}
    \caption{Rectangles $R_i$ and $U_i$ for $k=2$, $n = 14$ and $\Lambda = \ZZ^2$. By Lemma~\ref{lemma:upper-bound:rectangles} this gives $i \in \{0,1,2,3\}$.}
    \label{fig:upper-bound-boxes}
\end{figure}

The following lemma shows that to fence all vertices of $\conv{H_{n,{\bf p}+\Lambda}}$
we need only a logarithmic amount of these boxes.

\begin{lemma}\label{lemma:upper-bound:rectangles}
    Let ${\bf p} \in \RR^2$ and let  ${\bf q} \in \vertices{\conv{H_{n,{\bf p}+\Lambda}}}$ distinct from the leftmost or bottommost
    vertices, then there exists
    $i \in \ZZ$ with $\floor{\log_k{\det \Lambda}} \le i \le \floor{\log_k\left(\frac{n}{\det \Lambda}+1\right)}$
    such that ${\bf q} \in R_i$ or ${\bf q} \in U_i$.
\end{lemma}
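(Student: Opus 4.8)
The plan is to locate the vertex $\mathbf q = (q_x, q_y)$ according to which of the vertical strips $[k^i, k^{i+1}]$ its $x$-coordinate falls into, and then show that the constraint $\mathbf q \in H_{n,\mathbf p+\Lambda}$, together with the fact that $\mathbf q$ is not the leftmost vertex, pins $q_y$ down to lie either in the corresponding $R_i$ or in the unit square $U_i$. First I would set $i \coloneqq \lfloor \log_k q_x \rfloor$, so that $k^i \le q_x < k^{i+1}$ by definition. Since $\mathbf q$ lies above the hyperbola we have $q_x q_y \ge n$, hence $q_y \ge n/q_x > n/k^{i+1}$, which is exactly the lower edge of $R_i$. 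So the only thing that can push $\mathbf q$ out of $R_i$ is $q_y$ being larger than $n/k^i$, the top edge of $R_i$; in that case I would show $\mathbf q \in U_i = \mathbf r_i + [0,1]^2$, i.e. $k^i \le q_x \le k^i + 1$ and $n/k^i \le q_y \le n/k^i + 1$.

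The key step is controlling how far $q_x$ can exceed $k^i$ when $q_y > n/k^i$. Here I would use that $\mathbf q$ is a vertex of $\conv{H_{n,\mathbf p+\Lambda}}$ distinct from the leftmost one, so it sits on a vertical lattice line of $\mathbf p + \Lambda$ that is not the leftmost such line intersecting $\{x>0\}$; combined with the primitivity of $(1,0)$ in $\Lambda$ (consecutive vertical lattice lines are at horizontal distance $\det\Lambda \le 1$), this forces there to be at least one lattice line strictly between the $y$-axis-side and $q_x$. More usefully, I would argue directly as in Lemma~\ref{lem:square}: all vertices other than the leftmost have distinct $x$-coordinates on distinct vertical lattice lines, so by convexity of the branch, the vertex immediately to the left of $\mathbf q$ has an $x$-coordinate $\ge q_x - \det\Lambda \ge q_x - 1$ and $y$-coordinate $\ge q_y$; but that left neighbor still satisfies $x\cdot y \ge n$, and if $q_y > n/k^i$ then we need $q_x - 1 \le k^i$, i.e. $q_x \le k^i + 1$. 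This is the heart of the argument and where I expect the only real subtlety: making precise that being "not the leftmost vertex" guarantees a left neighbor on a lattice line at most $\det\Lambda$ to the left, and that $\det\Lambda \le 1$ because $(1,0)$ is primitive. A symmetric argument (running the roles of $x$ and $y$, using that $\mathbf q$ is not the bottommost vertex and $(0,1)$ is primitive) handles the case $q_x > n/k^i \cdot (\text{something})$; in fact the $x$-argument alone suffices once we observe that $q_y > n/k^i$ combined with $q_x q_y$ bounded (from $\mathbf q$ being in $R_i$'s strip on the $x$-side) forces $q_y \le n/k^i + $ (a controlled error), which I would bound using the left-neighbor inequality $q_y (q_x - 1) < n$... wait, that's backwards; instead I bound $q_y$ from above by noting $q_y < n/(q_x-\det\Lambda)$ is false — rather I use the bottommost-vertex symmetry to get $q_y \le n/k^i + 1$ directly.

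Finally I would check the range of $i$. From $q_x > \det\Lambda$ (Lemma~\ref{lem:square}) we get $i = \lfloor \log_k q_x \rfloor \ge \lfloor \log_k \det\Lambda \rfloor$, and from $q_x < n/\det\Lambda + 1$ (again Lemma~\ref{lem:square}) we get $k^i \le q_x < n/\det\Lambda + 1$, hence $i \le \lfloor \log_k(n/\det\Lambda + 1)\rfloor$. In the exceptional case where $\mathbf q \in U_i$ rather than $R_i$, the index $i$ is the same $\lfloor \log_k q_x \rfloor$ (up to the possibility $q_x = k^i+1$ still having floor $i$ as long as $k^i + 1 < k^{i+1}$, which holds for $k \ge 2$), so the stated range still applies. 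The main obstacle, to reiterate, is the left-neighbor / bottom-neighbor argument: one must invoke convexity of the hyperbolic region plus primitivity of the axis vectors to guarantee that a vertex which is neither leftmost nor bottommost cannot simultaneously have $q_x$ far above $k^i$ and $q_y$ far above $n/k^i$, since then the adjacent vertex on either side would violate $xy \ge n$ or fail to be strictly interior-separated. Everything else is bookkeeping with logarithms.
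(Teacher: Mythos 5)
Your high-level structure matches the paper's proof exactly: set $i = \lfloor \log_k {\bf q}_x \rfloor$, invoke Lemma~\ref{lem:square} to bound the range of $i$, observe that ${\bf q}_x{\bf q}_y \ge n$ places ${\bf q}$ in or above $R_i$, and show that ``above $R_i$'' forces ${\bf q} \in U_i$. However, your key step contains a genuine error. You assert that ``the vertex immediately to the left of ${\bf q}$ has an $x$-coordinate $\ge {\bf q}_x - \det\Lambda$,'' but this is false: consecutive vertices of $\conv{H_{n,{\bf p}+\Lambda}}$ can be arbitrarily far apart horizontally (think of the near-asymptotic parts of the hull). What is true, and what the paper actually uses, is far simpler: since $\Lambda$ is a \emph{superlattice} of $\ZZ^2$, both $(1,0)$ and $(0,1)$ belong to $\Lambda$, so ${\bf q}-(1,0)$ and ${\bf q}-(0,1)$ are lattice points of ${\bf p}+\Lambda$. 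If ${\bf q}_y - 1 \ge n/k^i \ge n/{\bf q}_x$, then ${\bf q}-(0,1) \in H_{n,{\bf p}+\Lambda}$ and ${\bf q}$ lies in the interior of the vertical segment between ${\bf q}-(0,1)$ and ${\bf q}+(0,1)$, contradicting that ${\bf q}$ is a vertex; this gives ${\bf q}_y < n/k^i+1$. Then if ${\bf q}_x - 1 \ge k^i$ while ${\bf q}_y > n/k^i$, the product $({\bf q}_x-1){\bf q}_y > n$ puts ${\bf q}-(1,0)$ in $H_{n,{\bf p}+\Lambda}$, again contradicting vertexhood; this gives ${\bf q}_x < k^i+1$. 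No ``previous vertex'' enters the argument at all.

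A secondary confusion: the hypothesis that ${\bf q}$ is not the leftmost or bottommost vertex is only used to apply Lemma~\ref{lem:square} and pin down the range of $i$; it plays no role in the $R_i$-versus-$U_i$ dichotomy, which you seemed to think required it (``making precise that being not the leftmost vertex guarantees a left neighbor on a lattice line at most $\det\Lambda$ to the left''). Your self-acknowledged tangles (``wait, that's backwards'') reflect exactly this: the argument you are reaching for is not about neighboring vertices or neighboring lattice lines, but about the two specific integer translates ${\bf q}-(1,0)$ and ${\bf q}-(0,1)$, which always lie in the translated lattice because $\ZZ^2 \subseteq \Lambda$.
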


\begin{proof}
    Consider ${\bf q} = ({\bf q}_x, {\bf q}_y) \in \vertices{\conv{H_{n,{\bf p}+\Lambda}}}$. 
    Let $i$ be such that ${\bf q}_x \in \left[k^i, k^{i+1}\right)$; that is, $i \coloneqq \floor{\log_k {\bf q}_x}$.
    By Lemma~\ref{lem:square}, $\det \Lambda < {\bf q}_x < \tfrac{n}{\det \Lambda}+1$, which gives
    $\floor{\log_k \det \Lambda} \le i \le \floor{\log_k\left(\tfrac{n}{\det \Lambda}+1\right)}$.
    
    By the definition of $i$, ${\bf q}$ is either in $R_i$ or directly above it; suppose the latter happens.
    Then, the vertical distance between ${\bf q}$ and $R_i$ must be smaller than $1$, since otherwise ${\bf q} - (0, 1)$ would be in $H_{n,{\bf p}+\Lambda}$,
    contradicting that ${\bf q}$ is a vertex.
    Once we know this, ${\bf q}$ must be in $U_i$ since otherwise ${\bf q} - (1, 0)$ would be in $H_{n,{\bf p}+\Lambda}$, again a contradiction with ${\bf q}$ being a vertex.
\end{proof}

We can now prove Theorem~\ref{thm:upper-bound-superlattice}, by adding up the areas of the $R_i$ and $U_i$ that contain the vertices of $H_{n,{\bf p}+\Lambda}$:

\begin{proof}[Proof of Theorem~\ref{thm:upper-bound-superlattice}]
    Recall that $\area{R_i}=\tfrac{(k - 1)^2}{k} \tfrac{n}{\det \Lambda}$, independent of $i$.
    We first look at the case when $\area{R_i}< 1$.
    Then we have
    \[
        \frac{n}{\det \Lambda} < \frac{k}{(k - 1)^2} \le 2,
    \]
    where the inequality in the right follows from $k \ge 2$.
    Since $\det{\Lambda} \le 1$, this implies $n \le 2$ so that $m = \tfrac2{\det \Lambda}\ge 2$.
    Lemma~\ref{lem:square} implies that all but at most $2$ vertices lie in $[0, 3]^2$, which is a square of (normalized) area $\tfrac9{\det \Lambda}$.
    Hence, by Corollary~\ref{coro:Rabinowitz} the total number of vertices is at most
    \[
        \frac{2 \pi^{2/3}9^{1/3}}{\det{\Lambda}^{1/3}} +2 =
        (2^2 9\pi^2m)^{1/3} +2<
        2 (2^8 \pi^2 m)^{1/3} <
        (2^8 \pi^2 m)^{1/3} (\log_k{m} + 2),
    \]
    where the inequality in the middle uses that $\max\{2^29\pi^2m,2^3\}\le 2^8\pi^2 m$ and the
    last inequality uses that $\log_k m \ge \log_k 2 \ge 0$.
    \medskip

    So, for the rest of the proof we assume that $\area{R_i} \ge 1$, giving
    \[
        1\le \area{R_i}= \frac{(k - 1)^2 n}{k \det{\Lambda}} \le \frac{(k - 1)^2}{2k} m.
    \]
    Although the rectangles $R_i$ and $U_i$ defined above are not
    necessarily lattice polygons in ${\bf p}+\Lambda$, the convex hull of the vertices
    they each contain, are.
    Furthermore, since the points in $\vertices{\conv{H_{n,{\bf p}+\Lambda}}}$ form an $x$-decreasing and
    $y$-increasing sequence, the vertices that lie in a particular $R_i$ are contained in the lower triangle of their
    bounding box; this bounding box is itself a rectangle contained in $R_i$, so
    \[
        \area{\conv{\vertices{H_{n,{\bf p}+\Lambda}} \cap R_i}}
        < \frac{1}{2}\area{R_i}
    \]

    Now, if the number of vertices lying in any region is $\ge 3$, their convex hull is $2$-dimensional, so unless there are $2$ vertices or less, we can apply Corollary~\ref{coro:Rabinowitz}.
    Thus we can conclude that the number of vertices within each $R_i$ is at most
    \[
        \max\left\{2, \kappa_2 \left(\frac{\area{R_i}}{2}\right)^{1/3}\right\} = \kappa_2 \left(\frac{\area{R_i}}{2}\right)^{1/3} \le
        \kappa_2 \left(\frac{(k - 1)^2}{2k} m\right)^{1/3}
    \]
    where the first equality uses that $\area{R_i}\ge 1$ and $\kappa_2 \coloneqq 2 \pi^{2/3} > 4$.

    Similarly, since $\area{U_i} = 1/\det{\Lambda} \ge 1$, the number of vertices in each $U_i$ is at most
    \begin{align*}
        \max\left\{2, \kappa_2 \left(\frac{\area{U_i}}{2}\right)^{1/3}\right\}
        &= \kappa_2 \left(\frac{\area{U_i}}{2}\right)^{1/3} =\\
        &= \kappa_2 \left(\frac1{2 \det{\Lambda}}\right)^{1/3} \le \\
        &\le \kappa_2 \left(\frac{2(k - 1)^2}{2k \det \Lambda}\right)^{1/3} \le
           \kappa_2 \left(\frac{(k - 1)^2}{2k} m\right)^{1/3}.
    \end{align*}
    We then have that the combined number of vertices in an $R_i$ and an $U_i$ is at most
%    \pacoS{esto vale tambi\'en si $m$ es la media, en lugar del m\'aximo}
    \[
        2 \kappa_2  \left(\frac{(k - 1)^2}{2k}m\right)^{\frac13} =
        \left(\frac{2^5 \pi^2 (k - 1)^2}{k} m\right)^{\frac13}.
    \]

    By Lemma~\ref{lemma:upper-bound:rectangles}, the number of boxes $R_i$ or $U_i$ that contain vertices other than the first and last is at most
    \begin{align*}
        \floor{{\log_k{\left(\frac{n}{\det \Lambda}+1\right)}}} + \ceil{\log_k{\frac{1}{\det \Lambda}}}
        &\le \log_k\left(m+1\right) + \log_k\frac{m}2 + 1 \le\\
        &\le \log_k\left(2m\right) + \log_k\frac{m}2 + 1=\\
        &=2 \log_k m + 2=
        2(\log_k{m} + 1),
    \end{align*}
    where we use that $m = \max\{\tfrac{n}{\det \Lambda}, \tfrac2{\det \Lambda}\}\ge 2$.
    % \pacoS{esto vale tambi\'en si $m$ es la media, en lugar del m\'aximo}

    Hence, the total number of vertices (including the first and last) is at most
    \[
        \left(\frac{2^8 \pi^2 (k - 1)^2}{k} m\right)^{\frac13} (\log_k{m} + 1) + 2 \le
        \left(\frac{2^8 \pi^2 (k - 1)^2}{k} m\right)^{\frac13} (\log_k{m} + 2).\qedhere
    \]
\end{proof}

%RESCATE 1

\subsection{Lower Bound}\label{subsec:lower-bound}
In this section we prove the lower bound part of Theorem~\ref{thm:main-new}.
In fact, we prove the following stronger result about the distribution of the vertices.

\begin{theorem}\label{thm:closest-parallel-asymptotes-contain-vertex}
    Let $h$ be a general hyperbola with asymptotes of rational slope,
    as given by Eq.~\eqref{eq:hyp-intro} with $a, b, c, \Delta \in \ZZ$.
    Let $H$ be the closure of one of the convex regions of $\RR^2 \setminus h$.

    Then, the closest $\floor{\left(\tfrac{n}{\Delta}\right)^{1/3}}$ parallel lines to each
    asymptote of $h$ that intersect $H$ each contain a vertex of
    $\conv{H \cap \ZZ^2}$.
    Hence,
    \[
        \cardinal{\vertices{\conv{H}}} \ge 2\left(\frac{|n|}{\Delta}\right)^{1/3} - 2.
    \]
\end{theorem}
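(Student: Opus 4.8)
The plan is to reduce to the standard hyperbola, then certify, for each of the first $J$ lattice lines parallel to an asymptote, that the lattice point of $H$ lying lowest over that line is a vertex. Assume $n>0$; dividing Eq.~\eqref{eq:hyp-intro} by $\gcd(a,b,c)$ leaves $H$, the asymptotic slopes and the ratio $n/\Delta$ unchanged while making $\gcd(a,b,c)=1$, so Proposition~\ref{prop:transform} applies. Composing its affine map with $(x,y)\mapsto(-x,-y)$ if needed, we are reduced to the statement for $\conv{H_{N,{\bf p}+\Lambda}}$, where $N\coloneqq n/\Delta^2$, $\delta\coloneqq\det\Lambda=1/\Delta$, and $(1,0),(0,1)$ are primitive in the superlattice $\Lambda\supseteq\ZZ^2$; the asymptotes become the coordinate axes. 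Every vertical line with positive $x$ meets $H_N$, and since $(0,1)$ is primitive the vertical lattice lines are $\ell_j\colon x=t_j$ with $t_j=t_1+(j-1)\delta$, $j\ge1$, $t_1\in(0,\delta]$, consecutive lattice points on each being at vertical distance $1$; hence the lowest lattice point of $\ell_j\cap H_N$ is $v_j=(t_j,y_j)$ with $N/t_j\le y_j<N/t_j+1$. Put $J\coloneqq\floor{(N/\delta)^{1/3}}=\floor{(n/\Delta)^{1/3}}$. By the $x\leftrightarrow y$ symmetry it suffices to prove that $v_j$ is a vertex of $\conv{H_{N,{\bf p}+\Lambda}}$ for $1\le j\le J$, and then to check that the $2J$ points obtained this way are pairwise distinct.

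To show $v_j$ is a vertex I will produce a linear functional minimized over $H_{N,{\bf p}+\Lambda}$ uniquely at $v_j$. Consider $g_a(x,y)\coloneqq x/a+ay/N$ for $a>0$: by AM--GM, $g_a\ge 2\sqrt{xy/N}\ge 2$ on $H_N$ with equality only at $(a,N/a)$, and $g_a\ge0$ on the recession cone $\RR_{\ge0}^2$. Every lattice point of $H_N$ has positive $x$-coordinate, so lies on some $\ell_i$, and there $g_a$ is at least $g_a(v_i)$ (it increases in $y$); hence it is enough to choose $a>0$ with $g_a(v_j)<g_a(v_i)$ for every integer $i\ge1$, $i\ne j$, for then $v_j$ is the unique minimizer of $g_a$ over $\conv{H_{N,{\bf p}+\Lambda}}$, hence a vertex. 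Substituting $y_j<N/t_j+1$ and $y_i\ge N/t_i$, this is implied by $\tfrac aN<\tfrac{(t_i-t_j)(t_it_j-a^2)}{a\,t_it_j}$ for all such $i$. Writing $m=i-j$ and $t_i=t_j+m\delta$, the function $m\mapsto(t_i-t_j)(t_it_j-a^2)-\tfrac{a^2t_it_j}{N}$ is a quadratic in $m$ with positive leading coefficient and negative value at $m=0$, so it has one positive and one negative root; thus it is positive at every integer $m\ge1$ iff it is positive at $m=1$, and likewise on the negative side. So only $i=j\pm1$ impose conditions, and these reduce to requiring $a^2$ to lie in the open interval with endpoints $\tfrac{\delta N\,t_{j-1}t_j}{\delta N-t_{j-1}t_j}$ and $\tfrac{\delta N\,t_jt_{j+1}}{\delta N+t_jt_{j+1}}$ (only the upper endpoint is present when $j=1$, and then the choice of $a$ is immediate).

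The step I expect to be the tightest is that this interval is nonempty exactly when $j\le J$. Cross-multiplying the two endpoint conditions and using $t_{j+1}-t_{j-1}=2\delta$ and $t_{j-1}t_{j+1}=t_j^2-\delta^2$, nonemptiness is equivalent to $t_j^3/\delta^2<N+t_j$; and since $t_j\le j\delta$ we get $t_j^3/\delta^2\le j^3\delta$, which is $\le N$ precisely because $j\le J$ means $j^3\le N/\delta$. Hence a valid $a$ exists and $v_j$ is a vertex for all $1\le j\le J$. To conclude: by symmetry the leftmost lattice points $w_1,\dots,w_J$ of $H_N$ on the first $J$ horizontal lattice lines are vertices as well. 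If $n\Delta^2\le1$ then $n/\Delta\le1$ and the asserted bound is $\le0$, so there is nothing to prove; otherwise $(J\delta)^2\le(n/\Delta)^{2/3}/\Delta^2<N$, whence the $v_j$ (with $x$-coordinate $\le J\delta$) and the $w_j$ (with $x$-coordinate $\ge N/(J\delta)>J\delta$) are all distinct. Therefore
\[
  \cardinal{\vertices{\conv{H}}}\ \ge\ 2J\ \ge\ 2\left(\frac{n}{\Delta}\right)^{1/3}-2.
\]
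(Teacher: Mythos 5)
Your proof is correct, and it takes a genuinely different intermediate route from the paper while sharing the same skeleton (reduce to the standard hyperbola via Proposition~\ref{prop:transform}, show that the bottommost lattice point on each of the first $J$ vertical lattice lines is a vertex, then use the $x\leftrightarrow y$ symmetry and a disjointness check). Where the paper goes on to rescale to a ``vertical shear'' lattice so that vertical lattice lines are spaced $1$ apart, and then certifies vertexhood geometrically via the secant line $\ell$ through the hyperbola at $x={\bf q}_x\pm1$ (Lemmas~\ref{lemma:point_in_unit_moon_vertex} and~\ref{lemma:leftmost-cbrt-n-moon}), you instead stay with the $\delta$-spaced superlattice and exhibit an explicit linear functional $g_a(x,y)=x/a+ay/N$ — supporting the hyperbola at $(a,N/a)$ by AM--GM — and choose $a$ so that $v_j$ is its unique lattice minimizer. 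Your convexity-of-the-quadratic observation neatly replaces the paper's secant argument by reducing the infinitely many constraints from all $i$ to just $i=j\pm 1$, and the resulting nonemptiness condition $t_j^3/\delta^2<N+t_j$ is checked exactly where $j\le J$. I verified the computations (including that $N\delta>t_{j-1}t_j$ holds in the relevant range, that the cross-multiplication is legitimate, and the final distinctness of the $v_j$ and $w_j$ using $(J\delta)^2<N$). The trade-off: the paper's shear-lattice lemma is shorter and more geometric, and is reused elsewhere; your LP-certificate argument is a bit more computational but is self-contained and avoids introducing the auxiliary shear lattice.
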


Once again, using Proposition~\ref{prop:transform}, this theorem follows immediately from the following equivalent result.

\begin{proposition}\label{prop:leftmost-vertical-lines-contain-vertex}
    Let $n \in \RR_+$, ${\bf p} \in \RR^2$ and let $\Lambda$ be a superlattice of the integer lattice.
    Then, the $\floor{\left(\frac{n}{\det \Lambda}\right)^{1/3}}$ leftmost vertical and bottommost horizontal lines of ${\bf p}+\Lambda$ that intersect
    $h_n$ each contain a vertex of $\conv{H_{n,{\bf p}+\Lambda}}$.

    Hence,
    \[
        \cardinal{\vertices{\conv{H_{n,{\bf p}+\Lambda}}}} \ge 2\left(\frac{|n|}{\det \Lambda}\right)^{1/3} - 2.
    \]
\end{proposition}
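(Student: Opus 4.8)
The plan is to show that each of the leftmost $\floor{(n/\det\Lambda)^{1/3}}$ vertical lattice lines of ${\bf p}+\Lambda$ meeting $h_n$ carries a vertex of $\conv{H_{n,{\bf p}+\Lambda}}$; the argument for horizontal lines is symmetric, and the final inequality then follows because the two families of lines can share at most $2$ common vertices (one near each ``end'' of the staircase), so the vertex count is at least $2\floor{(n/\det\Lambda)^{1/3}} - 2 \ge 2(n/\det\Lambda)^{1/3} - 2$. Fix a vertical lattice line $\ell_j$, the $j$-th from the left among those meeting $h_n$, with $1 \le j \le \floor{(n/\det\Lambda)^{1/3}}$. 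On $\ell_j$ there is a lowest lattice point ${\bf q}$ of $H_{n,{\bf p}+\Lambda}$ (the line enters $H_n$ because it meets $h_n$, and $H_n$ is closed). I want to show ${\bf q}$ is a vertex of $\conv{H_{n,{\bf p}+\Lambda}}$.

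The key step is a local convexity/separation estimate. Since ${\bf q}$ is the lowest lattice point of $H_n$ on $\ell_j$, the point ${\bf q}-(0,d)$ lies strictly below $h_n$, where $d=\det\Lambda$ is the vertical lattice spacing (recall $(0,1)$, $(1,0)$ are primitive, so consecutive lattice lines are at distance $d$, and in fact we only need $d \le 1$ here since $\Lambda \supseteq \ZZ^2$ — but using the exact spacing $d$ is what the bound $(n/d)^{1/3}$ needs). To certify that ${\bf q}$ is a vertex it suffices to exhibit a supporting line: I will argue that the line through ${\bf r}_j$ and $h_n$'s geometry forces ${\bf q}$ to stick out. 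Concretely, the danger is that ${\bf q}$ lies on or below the segment joining a lattice point on $\ell_{j-1}$ (or further left) to one on $\ell_{j+1}$ (or further right). Because the staircase of vertices is $x$-decreasing in slope (the hyperbola is convex), the only way ${\bf q}$ fails to be a vertex is if the chord of $h_n$ spanning from $\ell_{j-1}$'s region to $\ell_{j+1}$'s region passes at or above ${\bf q}$; I will bound the vertical drop of the hyperbola across three consecutive lattice lines near $x$-value $x_j$ and show it is less than $1$ (indeed less than $d$) when $x_j$ is small, i.e.\ when $j$ is in the stated range. The hyperbola $y=n/x$ has $|y'(x)| = n/x^2$, and over the leftmost lines $x_j \le x_0' + j d$ with $x_0'$ the abscissa where $h_n$ is first met; the vertical variation over a horizontal step $d$ is about $n d / x^2$. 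Forcing this to be $< d$, i.e.\ $x > \sqrt n$, is too weak; instead the right comparison is that over the first $j$ lines the accumulated horizontal span is $\le jd$ while the hyperbola's $y$-range there is controlled, and the $1/3$ exponent emerges from balancing $x_j^3 \sim n$ type inequalities — this is the computation I would carry out carefully.

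The main obstacle, and the part needing genuine care, is pinning down exactly why ``leftmost $\floor{(n/d)^{1/3}}$ lines'' is the right count: one must show that for each such $j$ the lowest lattice point ${\bf q}$ on $\ell_j$ cannot be expressed as a convex combination of lattice points of $H_{n,{\bf p}+\Lambda}$ lying on strictly earlier and strictly later lines. I expect the cleanest route is: let ${\bf q}'$ be the lowest lattice point on $\ell_{j+1}$ and note ${\bf q}' $ has $y$-coordinate at most (roughly) $n/x_j - (\text{drop}) + d$; the segment from any lattice point left of $\ell_j$ down to ${\bf q}'$ has, at abscissa $x_j$, height at least $y({\bf q}')$ minus one lattice step of slope, and comparing with $y({\bf q}) \le n/x_j + d$ one gets a contradiction precisely while $j \le (n/d)^{1/3}$, because the curvature gap between the chord and the curve over an interval of length $\Theta(jd)$ around $x_j \approx $ (small) is of order $n (jd)^2 / x_j^4$, and smallness of $x_j$ (of order $\sqrt{n/\,\cdot\,}$-free, just $x_j \le$ const$\cdot (n/d)^{1/3}\cdot$something) keeps this below the lattice spacing. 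I would make this rigorous by working in the transformed standard-hyperbola picture of Proposition~\ref{prop:transform}, writing the lattice lines explicitly as $x = x_0' + j d$ for $j = 1, 2, \dots$, using $y(x) = n/x$ and its convexity to get the exact chord-minus-curve formula, and then verifying the inequality $\floor{(n/d)^{1/3}}$ lines' worth of $j$ all satisfy it; the strictness in Lemma~\ref{lem:square} (the leftmost line always contains exactly one vertex) handles the boundary case $j$ near the ends and explains the ``$-2$''.
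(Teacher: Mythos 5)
Your high-level plan is in the right spirit and shares the same shape as the paper's: on each of the leftmost vertical lattice lines take the bottommost lattice point ${\bf q}$ of $H_n$, compare it with a chord of the hyperbola spanning the two neighboring vertical lattice lines, and argue ${\bf q}$ is strictly below the chord, hence a vertex. But several details are off, and in a few places they are off in a way that changes the exponent or the bound, so this is not yet a proof.

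First, the crucial computation is not carried out, and the heuristic formula you propose is wrong in two ways. The chord only needs to span two lattice-line widths, i.e.\ the interval $[{\bf q}_x - d, {\bf q}_x + d]$ with $d = \det\Lambda$, not ``an interval of length $\Theta(jd)$''. And for $f(x)=n/x$ the chord--curve gap at $x={\bf q}_x$ over that interval is exactly
\[
    \frac{n{\bf q}_x}{{\bf q}_x^2 - d^2} - \frac{n}{{\bf q}_x}
    = \frac{nd^2}{{\bf q}_x({\bf q}_x^2 - d^2)},
\]
of order $nd^2/{\bf q}_x^3$, not $nL^2/x_j^4$ as you write. If you carry your formula through with $x_j\approx jd$ you get a threshold $j\lesssim n^{1/2}/d$ instead of $(n/d)^{1/3}$. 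The paper avoids this bookkeeping by first dilating the $x$-axis by $1/\det\Lambda$ (Lemma~\ref{lemma:leftmost-cbrt-n-moon} with the ``vertical shear'' normalization), after which the lattice lines have unit spacing and the chord is over $[{\bf q}_x-1,{\bf q}_x+1]$ with $n$ replaced by $n/\det\Lambda$; that is exactly the role of Lemma~\ref{lemma:point_in_unit_moon_vertex}.

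Second, you conflate two different spacings. You claim ``${\bf q}-(0,d)$ lies strictly below $h_n$'' and later want the gap to beat $d$. The horizontal distance between consecutive vertical lattice lines is $d=\det\Lambda$, but the spacing of lattice points \emph{along} a single vertical line is $1$, because $(0,1)$ is primitive in $\Lambda$. So the correct statement is ${\bf q}-(0,1)\notin H_n$, i.e.\ ${\bf q}_y < n/{\bf q}_x+1$, and the chord gap must exceed $1$, not $d$. Mixing these up again breaks the exponent.

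Third, your explanation of the $-2$ is incorrect. The $-2$ in the statement comes purely from $\floor{x}\ge x-1$, giving $2\floor{(n/\det\Lambda)^{1/3}}\ge 2(n/\det\Lambda)^{1/3}-2$. Shared vertices between the vertical and horizontal families --- which, if any existed, would be near $x=y=\sqrt n$ in the \emph{middle} of the staircase, not at the ends --- must be shown not to occur. The paper does this by observing that a shared vertex would force $n\le\det\Lambda\le 1$, in which case the claimed lower bound is nonpositive and there is nothing to prove. If you subtract $2$ for overlap on top of the floor you would only get the weaker bound $2(n/\det\Lambda)^{1/3}-4$.

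Fix these three points --- use the chord over $\pm d$, compare with $1$, and handle overlap as the paper does --- and your argument becomes essentially the paper's.
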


To prove this result, we use another transformation which allows
us to work under a more practical family of lattices.

We say that a lattice $\Gamma \subset \RR^2$ is a \emph{vertical shear} if it is generated
by the vectors $(1, s)$ and $(0, 1)$ for some $s \in \RR$.
Equivalently, if $\Gamma$ is unimodular and its vertical lattice lines are at
distance 1 to one another.

\begin{lemma}\label{lemma:point_in_unit_moon_vertex}
        Let $n \in \RR_+$, ${\bf p} \in \RR^2$ and $\Gamma$ a vertical
    shear lattice. If ${\bf q}= ({\bf q}_x, {\bf q}_y) \in H_{n,{\bf p}+\Lambda}$ is such that
    \[
        {\bf q}_y < \min\left\{\frac{n{\bf q}_x}{{\bf q}_x^2 - 1}, \frac{n}{{\bf q}_x} + 1\right\},
    \]
    then ${\bf q}$ is a vertex of $\conv{H_{n, {\bf p}+\Gamma}}$.
\end{lemma}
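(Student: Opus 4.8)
The plan is to certify that ${\bf q}$ is a vertex by producing a linear functional that attains its minimum over $H_{n,{\bf p}+\Gamma}=H_n\cap({\bf p}+\Gamma)$ uniquely at ${\bf q}$. Concretely, I will look for a line of negative slope $-\mu$ through ${\bf q}$ so that the ``moon'' $M_\mu:=H_n\cap\{(x,y):\mu x+y\le \mu{\bf q}_x+{\bf q}_y\}$ trapped between that line and the hyperbola contains no point of ${\bf p}+\Gamma$ besides ${\bf q}$; then $(x,y)\mapsto\mu x+y$ is the desired functional. Note first that ${\bf q}_x,{\bf q}_y>0$ since ${\bf q}_x{\bf q}_y\ge n>0$, and that the second hypothesis forces ${\bf q}_x>1$: its right-hand side $\tfrac{n{\bf q}_x}{{\bf q}_x^2-1}$ is negative when ${\bf q}_x<1$, whereas ${\bf q}_y>0$; the boundary case ${\bf q}_x=1$ is dealt with at the end.

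Next I would translate the hypotheses into conditions on $\mu$. For $\mu>0$ set $g(x):=\mu x^2-(\mu{\bf q}_x+{\bf q}_y)x+n$; clearing the denominator $x>0$ shows that a point $(x,y)\in H_n$ lies in $M_\mu$ exactly when $g(x)\le 0$, i.e.\ when $x$ lies between the two roots of this upward parabola. Direct substitution gives $g({\bf q}_x)=n-{\bf q}_x{\bf q}_y\le 0$ (because ${\bf q}\in H_n$), $g({\bf q}_x+1)=({\bf q}_x+1)(\mu-{\bf q}_y)+n$, and $g({\bf q}_x-1)=n-({\bf q}_x-1)(\mu+{\bf q}_y)$. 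Hence $g({\bf q}_x+1)>0\iff\mu>\mu_1:={\bf q}_y-\tfrac{n}{{\bf q}_x+1}$ and $g({\bf q}_x-1)>0\iff\mu<\mu_2:=\tfrac{n}{{\bf q}_x-1}-{\bf q}_y$ (here using ${\bf q}_x>1$). One checks $\mu_1>0$ from ${\bf q}_y\ge \tfrac n{{\bf q}_x}>\tfrac n{{\bf q}_x+1}$, and — this is the one place the essential hypothesis is used — the inequality $\mu_1<\mu_2$ rearranges to exactly $2{\bf q}_y<\tfrac n{{\bf q}_x-1}+\tfrac n{{\bf q}_x+1}=\tfrac{2n{\bf q}_x}{{\bf q}_x^2-1}$, i.e.\ to the second hypothesis. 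So I may fix any $\mu\in(\mu_1,\mu_2)$; in particular $\mu>0$.

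With such a $\mu$ the parabola $g$ satisfies $g({\bf q}_x-1)>0$, $g({\bf q}_x)\le 0$, $g({\bf q}_x+1)>0$, so both of its roots lie strictly inside $({\bf q}_x-1,{\bf q}_x+1)$; therefore $M_\mu$ is contained in the open strip ${\bf q}_x-1<x<{\bf q}_x+1$. Since $\Gamma$ is a vertical shear, the points of ${\bf p}+\Gamma$ have $x$-coordinates exactly ${\bf p}_x+\ZZ$, of which only ${\bf q}_x$ lies in that strip, and on the line $x={\bf q}_x$ they have $y$-coordinates ${\bf q}_y+\ZZ$. A point of $M_\mu$ on this line has $\tfrac n{{\bf q}_x}\le y\le{\bf q}_y$ (lower bound from $H_n$, upper bound from the inequality defining $M_\mu$ at $x={\bf q}_x$), and the first hypothesis, $\tfrac n{{\bf q}_x}>{\bf q}_y-1$, leaves ${\bf q}_y$ as the only possible value of $y$. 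Thus $M_\mu\cap({\bf p}+\Gamma)=\{{\bf q}\}$: every $s\in H_{n,{\bf p}+\Gamma}$ with $\mu s_x+s_y\le\mu{\bf q}_x+{\bf q}_y$ equals ${\bf q}$, so ${\bf q}$ uniquely minimizes $(x,y)\mapsto\mu x+y$ over $H_{n,{\bf p}+\Gamma}$ and is therefore a vertex of $\conv{H_{n,{\bf p}+\Gamma}}$.

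I expect the only genuine content to be the last sentence of the second paragraph: the admissible window of slopes $(\mu_1,\mu_2)$ is nonempty precisely when the second hypothesis holds; everything else is bookkeeping plus a ``sign of a parabola at three consecutive lattice abscissae'' argument. Two small loose ends remain: the degenerate case ${\bf q}_x=1$, where one reads $\mu_2=+\infty$ (no upper constraint on $\mu$) and uses $g(0)=n>0$ to see $M_\mu\subset\{0<x<2\}$, again a strip meeting only $x={\bf q}_x$; and the routine fact that a point of a set $S$ uniquely minimizing a linear functional on $S$ is a vertex of $\conv{S}$, applied here with $S=H_{n,{\bf p}+\Gamma}$ (whose convex hull is a polyhedron).
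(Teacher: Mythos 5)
Your proof is correct and ultimately rests on the same geometric idea as the paper's: since $\Gamma$ is a vertical shear, only the lattice column $x = {\bf q}_x$ falls inside the strip $({\bf q}_x-1, {\bf q}_x+1)$, and one shows that an appropriately sloped line through (or near) ${\bf q}$ confines the relevant part of $H_n$ to that strip. The paper goes directly to the chord $\ell$ with $y = n\frac{2{\bf q}_x - x}{{\bf q}_x^2-1}$, which meets the hyperbola at $x = {\bf q}_x \pm 1$, and observes that the hypotheses put ${\bf q}$ below $\ell$ and make it bottommost on its column; you instead parameterize a family of candidate separating lines by slope $-\mu$, compute the admissible window $(\mu_1,\mu_2)$ from sign conditions of the parabola $g$ at $x = {\bf q}_x \pm 1$, and verify the window is nonempty. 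In fact the paper's chord slope $n/({\bf q}_x^2-1)$ lies inside your window $(\mu_1,\mu_2)$ precisely under the same hypothesis, so the paper picks a single explicit witness where you describe the whole admissible set. Your version is a little more explicit about why the argument produces a bona fide separating functional (unique minimizer $\Rightarrow$ vertex), which the paper's terse biconditional leaves to the reader. One cosmetic note: you consistently call ${\bf q}_y < \frac{n{\bf q}_x}{{\bf q}_x^2-1}$ the ``second'' hypothesis and ${\bf q}_y < \frac{n}{{\bf q}_x}+1$ the ``first,'' which is the reverse of the order in the min in the statement; the labeling is internally consistent so the logic is unaffected, but it could confuse a reader checking against the lemma.
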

\begin{proof}

%
%    Let us consider the points ${\bf q}^-,{\bf q}^+\in h_n$ in adjacent vertical lattice lines:
%    \[
%        {\bf q}^- \coloneqq \left({\bf q}_x - 1, \frac{n}{{\bf q}_x - 1}\right),
%        \quad
%        {\bf q}^+ \coloneqq \left({\bf q}_x + 1, \frac{n}{{\bf q}_x + 1}\right),
%    \]
%    and consider the line $\ell$ with equation
%    $y = n\frac{2{\bf q}_x - x}{{\bf q}_x^2 - 1}$ that cuts the hyperbola through those two points.

Let us consider the line $\ell$ with equation
    $y = n\frac{2{\bf q}_x - x}{{\bf q}_x^2 - 1}$ that cuts the hyperbola at $x\in\{{\bf q}_x-1, {\bf q}_x+1\}$.
     
    There is a vertex of $\conv{H_{n, {\bf p}+\Gamma}}$ in $({\bf q}_x - 1, {\bf q}_x + 1)\times \RR^2$ if and only if there are lattice points of $H_n$ in the vertical lattice line $\{x={\bf q}_x\}$ that are below $\ell$.

       The condition $ {\bf q}_y <\frac{n{\bf q}_x}{{\bf q}_x^2 - 1}$ guarantees ${\bf q}$ to be below $\ell$, and
       $ {\bf q}_y <\frac{n}{{\bf q}_x} + 1$ implies that ${\bf q} \in H_n$ is the bottommost such point on that vertical line, hence a vertex of $\conv{H_{n, {\bf p}+\Gamma}}$.
\end{proof}

% A generalized version of this lemma can be proven:
% \begin{lemma}\label{lemma:c_y_0_leq_k_prec_k_succ_mean}
%     If $(\ceil{\frac{n}{y_0}}, y_0)$ is not a vertex and the closest edge to $(\ceil{\frac{n}{y_0}}, y_0)$
%     has ends with height outside $[y_0 - k, y_0 + k]$, then
%     $\ceil{\frac{n}{y_0}} \ge \frac{\frac{n}{y_0 - k} + \frac{n}{y_0 + k}}{2}$.
%
%     Thus, if $\ceil{\frac{n}{y_0}} \le \frac{\frac{n}{y_0 - k} + \frac{n}{y_0 + k}}{2}$,
%     there is a vertex of $\conv{H_{n,{\bf p}+\ZZ^2}}$ with height in the interval
%     $[y_0 - k, y_0 + k]$.
% \end{lemma}

\begin{lemma}\label{lemma:leftmost-cbrt-n-moon}
    Let $n \in \RR_+$, ${\bf p} \in \RR^2$ and let $\Gamma$ be a vertical shear lattice.
    Then, the leftmost $\floor{\sqrt[3]n}$ vertical lines of ${\bf p}+\Gamma$ each contain a vertex
    of $\conv{H_{n,{\bf p}+\Gamma}}$.
\end{lemma}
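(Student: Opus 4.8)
The plan is to produce, on each of the relevant vertical lattice lines, an explicit lattice point of $H_n$ and then invoke Lemma~\ref{lemma:point_in_unit_moon_vertex} to certify it is a vertex. First I would set up coordinates for the lines. Since $\Gamma$ is a vertical shear, it is generated by $(1,s)$ and $(0,1)$, so the first coordinates of the points of ${\bf p}+\Gamma$ form the coset ${\bf p}_x+\ZZ$, and the vertical lattice lines with positive $x$-coordinate are $\{x=x_j\}$ with $x_j = x_1+(j-1)$ for $j\ge 1$ and some $x_1\in(0,1]$. Because $(0,1)\in\Gamma$, the lattice points of ${\bf p}+\Gamma$ on a fixed line $\{x=x_j\}$ are spaced by $(0,1)$, so that line contains a unique lattice point ${\bf q}^{(j)}=(x_j,{\bf q}^{(j)}_y)$ of $H_n$ of minimal $y$-coordinate, and it satisfies $\tfrac{n}{x_j}\le {\bf q}^{(j)}_y<\tfrac{n}{x_j}+1$; in particular ${\bf q}^{(j)}_y<\tfrac{n}{{\bf q}^{(j)}_x}+1$.

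The leftmost line $\{x=x_1\}$ always carries a vertex of $\conv{H_{n,{\bf p}+\Gamma}}$, namely the bottom endpoint of its unbounded vertical edge, exactly as observed in the proof of Lemma~\ref{lem:square}; so it remains to treat $j$ with $2\le j\le\floor{\sqrt[3]n}$. For such $j$ we have $x_j\ge x_1+1>1$, hence $x_j^2-1>0$ and Lemma~\ref{lemma:point_in_unit_moon_vertex} is applicable on that line; we also have $x_j = x_1+(j-1)\le 1+\floor{\sqrt[3]n}-1\le\sqrt[3]n$, so $x_j^3-x_j\le x_j^3\le n$. An elementary manipulation — cross-multiplying by the positive quantity $x_j(x_j^2-1)$ — shows that $x_j^3-x_j\le n$ is equivalent to $\tfrac{n}{x_j}+1\le\tfrac{n x_j}{x_j^2-1}$. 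Combining this with ${\bf q}^{(j)}_y<\tfrac{n}{x_j}+1$ gives ${\bf q}^{(j)}_y<\min\bigl\{\tfrac{n{\bf q}^{(j)}_x}{({\bf q}^{(j)}_x)^2-1},\ \tfrac{n}{{\bf q}^{(j)}_x}+1\bigr\}$, so Lemma~\ref{lemma:point_in_unit_moon_vertex} yields that ${\bf q}^{(j)}$ is a vertex of $\conv{H_{n,{\bf p}+\Gamma}}$ lying on $\{x=x_j\}$. As all these lines are distinct, this proves the statement (and Proposition~\ref{prop:leftmost-vertical-lines-contain-vertex} then follows by symmetry between vertical and horizontal lines, exactly as in its reduction).

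I do not expect any genuine difficulty here; the only point requiring a little care is the bookkeeping near the leftmost line. The vertical-shear hypothesis is precisely what guarantees $x_1\le 1$ (equal spacing $1$ of the vertical lattice lines), which is what keeps the $j$-th line inside the region $x\le\sqrt[3]n$ all the way up to $j=\floor{\sqrt[3]n}$; and for $j=1$ one may have $x_1^2-1\le 0$, so Lemma~\ref{lemma:point_in_unit_moon_vertex} cannot be applied to that line and one has to fall back on the unbounded-vertical-edge argument instead.
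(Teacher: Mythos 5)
Your proof is correct and follows essentially the same route as the paper's: pick the bottommost lattice point of $H_n$ on each vertical lattice line, use $x\le n^{1/3}$ to verify the hypothesis of Lemma~\ref{lemma:point_in_unit_moon_vertex}, and handle the leftmost line (where $x\le 1$ may make that lemma inapplicable) by the unbounded-vertical-edge observation. The only difference is presentational: you index the lines explicitly via $x_j=x_1+(j-1)$ with $x_1\in(0,1]$ and spell out the cross-multiplication, whereas the paper phrases the same inequality chain directly in terms of ${\bf q}_x$.
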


\begin{proof}
    Let ${\bf q} = ({\bf q}_x, {\bf q}_y) \in {\bf p}+\Gamma$ be the bottommost lattice point above the hyperbola along a lattice vertical line. In particular,  $ {\bf q}_y <\frac{n}{{\bf q}_x} + 1$.
    We have that
    \[
        {\bf q}_x \le n^{1/3} \Rightarrow {\bf q}_x({\bf q}_x^2 - 1) < n \Rightarrow (n + {\bf q}_x)({\bf q}_x^2 - 1) < n {\bf q}_x^2.
    \]
    If ${\bf q}_x \le 1$ then it is clear that ${\bf q}$ is a vertex (there is no other lattice vertical line with $0<x<{\bf q}_x$), and otherwise the above equation gives
    \begin{equation*}
        {\bf q}_y < \frac{n}{{\bf q}_x} + 1 < \frac{n {\bf q}_x}{{\bf q}_x^2 - 1},
    \end{equation*}
    so Lemma~\ref{lemma:point_in_unit_moon_vertex} implies that ${\bf q}$ is a vertex.
\end{proof}

\begin{proof}[Proof of Proposition~\ref{prop:leftmost-vertical-lines-contain-vertex}]
    Without loss of generality, assume that $(1, 0)$ and $(0, 1)$ are primitive in $\Lambda$.
    Consider the linear transformation $f$ which dilates the $x$ axis by a
    factor of $\tfrac1{\det \Lambda}$, 
    which maps $\Lambda$ to a vertical shear lattice $\Gamma$.
    It also maps the hyperbola $\left\{xy = n\right\}$ to a hyperbola $\left\{xy = n'\right\}$ with
    $n' \coloneqq \tfrac{n}{\det \Lambda}$, and it sends ${\bf p}$ to a certain point ${\bf p}'$.

    It follows from Lemma~\ref{lemma:leftmost-cbrt-n-moon} that the leftmost
    $\floor{\sqrt[3]{n'}}$ vertical lines of ${\bf p}'+\Gamma$ in the first quadrant,
    which are the image by $f$ of the leftmost $\floor{\sqrt[3]{n'}}$ vertical lines
    of ${\bf p}+\Lambda$, each contain a vertex of $\conv{H_{n',{\bf p}'+\Gamma}}$, image
    of $\conv{H_{n,{\bf p}+\Lambda}}$.
    With the same argument with a dilation in the $y$ axis, we conclude that
    the bottommost $\floor{\sqrt[3]{n'}}$ horizontal  lines of ${\bf p}+\Lambda$
    each contain a vertex of $\conv{H_{n,{\bf p}+\Lambda}}$.

    This gives a priori
    \[
        2\floor{\sqrt[3]{n'}} = 2\floor{\left(\frac{n}{\det \Lambda}\right)^{\frac13}} \ge 2\left(\frac{n}{\det \Lambda}\right)^{\frac13} - 2
    \]
    vertices, except we need to check that the ones obtained with horizontal and vertical lines are all different.
    If one vertex comes simultaneously from a vertical and horizontal line then $(\sqrt[3]{n'} \det \Lambda)^2 \ge n$, and we have that
    \[
        (\sqrt[3]{n'}\det \Lambda)^2 \ge n
        \quad\Leftrightarrow\quad
        n^{2/3}(\det \Lambda)^{4/3} \ge n
        \quad\Rightarrow\quad
        n \le (\det \Lambda)^4 \le \det \Lambda.
    \]
    Assuming that this does not happen is no loss of generality since $n \le \det \Lambda$ implies our lower bound to be zero.
\end{proof}
%RESCATE 2
 
\subsection{Experimental Results}\label{subsec:experimental-results}

Let $V(n)$ denote the number of vertices of $\conv{H_{n,\ZZ^2}}$.

Using the algorithm of the next section, we have computed $V(n)$
for all values of $n$ up to $10^6$ and sparse values up to $10^{21}$.
The result is plotted in Figure~\ref{fig:vert_conv_H_n}.

\begin{figure}[H]
    \centering
    \includegraphics[width=.98\textwidth]%
        {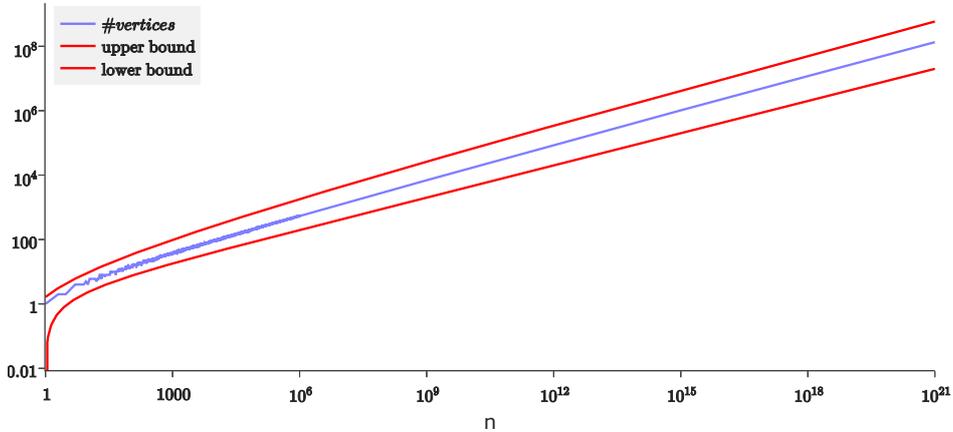}
    \caption{Number of vertices of $\conv{H_{n,\ZZ^2}}$, plotted against the upper bound of $8.2 n^{1/3}(\log_2{n} + 2)$ and the lower bound of $2n^{1/3} - 2$. Plot is logarithmic in both axes.}
    \label{fig:vert_conv_H_n}
\end{figure}

In order to have experimental evidence of whether $V(n)$ behaves closer to our upper bound or to our lower bound,
Figures~\ref{fig:vert_conv_H_n__div__cbrt_n_ln_n} and~\ref{fig:vert_conv_H_n__div__cbrt_n} show
respectively $V(n)/n^{1/3}\log n$ and $V(n)/n^{1/3}$.
In lighter blue we plot the individual values for each $n$, while in darker blue we plot the moving averages of 2000 and 20000 values of $n$ respectively.
It is difficult to draw conclusions but, to the naked eye, the first of these two plots seems closer to a constant for high values of $n$, suggesting that the true asymptotics of $V(n)$ is closer to $n^{1/3}\log n$ than to $n^{1/3}$.

\begin{figure}[H]
    \centering
    \includegraphics[width=.98\textwidth]%
        {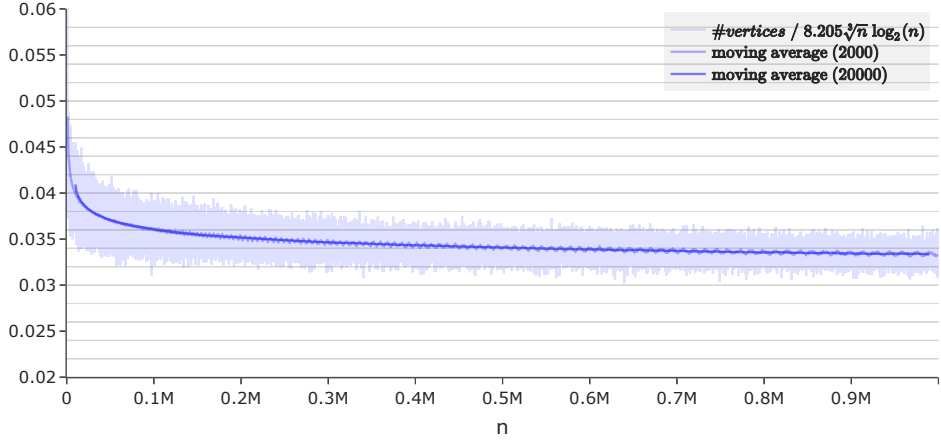}
    \caption{Number of vertices of $\conv{H_{n,\ZZ^2}}$ divided by $8.2n^{1/3} \log_2{n}$.}
    \label{fig:vert_conv_H_n__div__cbrt_n_ln_n}
\end{figure}

\begin{figure}[H]
    \centering
    \includegraphics[width=.98\textwidth]%
        {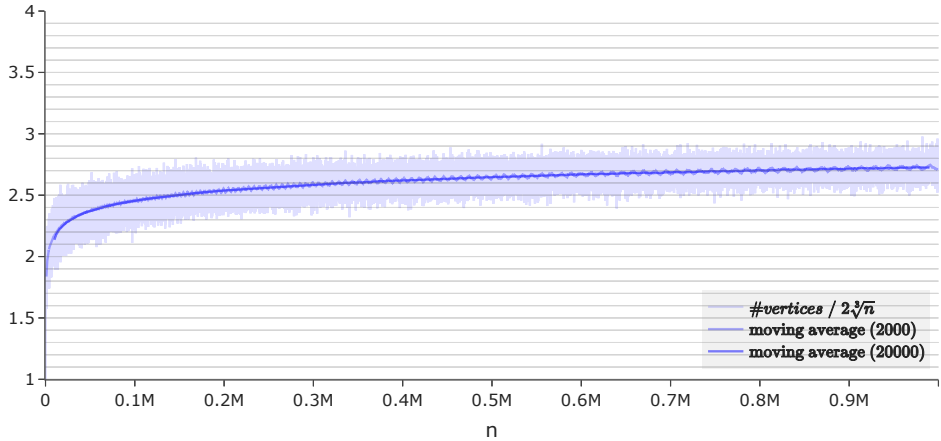}
    \caption{Number of vertices of $\conv{H_{n,\ZZ^2}}$ divided by $2n^{1/3}$.}
    \label{fig:vert_conv_H_n__div__cbrt_n}
\end{figure}

\section{Algorithm to enumerate the vertices}\label{sec:conv-hull-algorithm}
\label{sec:algorithm}

In this section we propose an algorithm that is able to find the vertices of $\conv{H_{n,\ZZ^2}}$
iteratively in $O(\log n)$ time per vertex.
Our algorithm is also applicable to find the vertices of $\conv{H_{n,{\bf p}+\Lambda}}$
for an arbitrary ${\bf p}\in H_n$ and $\Lambda$ an arbitrary rational lattice. However, whenever we state  complexity bounds we will typically assume $\Lambda$ to be a
reduced  sub-lattice of $\ZZ^2$, with the following definition:

\begin{definition}
\label{def:reduced_sublattice}
    A sublattice $\Lambda \subset \ZZ^2$ is called \emph{reduced} if it satisfies the following (equivalent) conditions:
    \begin{enumerate}
        \item $\pi_1(\Lambda)=\pi_2(\Lambda) = \ZZ$, where $\pi_i: \RR^2 \to \RR$ are the coordinate projections.
        \item $(\det \Lambda, 0)$ and $(0, \det \Lambda)$ are primitive lattice vectors.
        \item For some basis (equivalently, for any basis) $\{{\bf b}_1, {\bf b}_2\}$ of $\Lambda$ we have that
        \[
            \gcd({{\bf b}_1}_x, {{\bf b}_2}_x) = \gcd({{\bf b}_1}_y, {{\bf b}_2}_y) = 1.
        \]
    \end{enumerate}
\end{definition}

Observe that part (3) provides an easy way to transform any rational lattice into a reduced one: simply divide the $x$ and $y$ coordinates by the $\gcd$'s in the statement.

\subsection{Preprocessing: Reduction to the standard hyperbola and computation of standard basis}
\label{subsec:preprocessing}

The following version of Proposition~\ref{prop:transform} tells us how to translate results and algorithms on the standard hyperbola with respect to a reduced sublattice to an arbitrary hyperbola with asymptotes of rational slope.

\begin{lemma}
    \label{lemma:transform_sublattice}
    Consider the hyperbola $h$ of Eq.~\eqref{eq:hyp-intro}, with $\Delta = \sqrt{b^2 - 4ac} > 0$,
    $a, b, c, \Delta \in \ZZ$ and $x_0,\;y_0 \in \RR$.
    Then, there is an affine transformation in $\RR^2$ sending $h$ to the hyperbola
    $\{xy = \abs{n}\}$ and sending $\ZZ^2$ to a sublattice ${\bf p}+\Lambda \subseteq \ZZ^2$,
    with ${\bf p} \in \RR^2$ and $\det{\Lambda} = \Delta$.
    Moreover:
    \begin{enumerate}
        \item If $\gcd(a,b,c)=1$ then $\Lambda$ is reduced.
        \item     In $O(\log \max\{\abs{a}, \abs{b}, \abs{c}\})$ operations we can compute ${\bf p}$ and a basis of $\Lambda$ with coordinates bounded by $\max\{\abs{a}, \abs{b}, \abs{c}\}$.

    \end{enumerate}
\end{lemma}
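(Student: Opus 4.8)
The plan is to mimic the proof of Proposition~\ref{prop:transform}, but instead of mapping the asymptote directions to the canonical basis of $\ZZ^2$ (which forced $\ZZ^2$ into a superlattice), I would map the canonical basis of $\ZZ^2$ to the (primitive, integral) asymptote directions. That is, if ${\bf w}_1,{\bf w}_2$ are primitive integer vectors parallel to the two asymptotes of $h$, I take $g\colon\RR^2\to\RR^2$ to be the linear map with $g({\bf e}_1)={\bf w}_1$, $g({\bf e}_2)={\bf w}_2$; then $g^{-1}$ sends $h$ to a hyperbola with axis-parallel asymptotes and sends $\ZZ^2$ to the sublattice $\Lambda=g^{-1}(\ZZ^2)$, whose index equals $\abs{\det g}=\abs{\det({\bf w}_1\mid{\bf w}_2)}$. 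So the first step is to compute ${\bf w}_1,{\bf w}_2$ explicitly. In the case $ac\ne 0$ these are (up to sign) the vectors ${\bf v}_1'=(-c_1,a_2)$, ${\bf v}_2'=(c_2,-a_1)$ already produced in Proposition~\ref{prop:transform}, with $a=a_1a_2$, $c=c_1c_2$, $a_1c_1=\tfrac{b-\Delta}2$, $a_2c_2=\tfrac{b+\Delta}2$; one checks $\det({\bf w}_1\mid{\bf w}_2)=a_1c_1-(-a_2 c_2)$... more carefully, $\det\begin{pmatrix}-c_1 & c_2\\ a_2 & -a_1\end{pmatrix}=c_1a_1-c_2a_2=\tfrac{b-\Delta}2-\tfrac{b+\Delta}2=-\Delta$, so the index is exactly $\Delta$, as claimed. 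In the degenerate case $c=0$ one uses ${\bf w}_1=(b,-a)$, ${\bf w}_2=(0,1)$, with determinant $b$, and after dividing out $\gcd(a,b,c)$ the discriminant is $\abs{b}$, again matching $\det\Lambda=\Delta$. After applying $g^{-1}$ the hyperbola becomes $\{(x-x_0')(y-y_0')=\pm n/(\text{const})\}$; scaling one axis turns the right-hand side into $\abs{n}$, and translating by ${\bf p}=-(x_0',y_0')$ (suitably scaled) finishes; since $g^{-1}(\ZZ^2)\subseteq\ZZ^2$ only after we have arranged ${\bf w}_i\in\ZZ^2$, one must keep track that the scaling is by an integer so that $\Lambda$ stays inside $\ZZ^2$ — this is where the bookkeeping with $a_1,a_2,c_1,c_2$ matters.

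For part (1), I would reuse verbatim the primitivity argument at the end of the proof of Proposition~\ref{prop:transform}: $\gcd(c_1,a_2)$ and $\gcd(c_2,a_1)$ both divide $a$, $c$, $\tfrac{b+\Delta}2$ and $\tfrac{b-\Delta}2$, hence divide $b$ and thus $\gcd(a,b,c)$; so if $\gcd(a,b,c)=1$ then ${\bf w}_1,{\bf w}_2$ are primitive in $\ZZ^2$. Now $\Lambda=g^{-1}(\ZZ^2)$ is generated by $g^{-1}({\bf e}_1),g^{-1}({\bf e}_2)$, and the columns of $g$ (namely ${\bf w}_1,{\bf w}_2$) are primitive, which is exactly condition (3) of Definition~\ref{def:reduced_sublattice} — the $x$-coordinates of a basis of $\Lambda$ are, up to $\det$-scaling, the cofactors, and their $\gcd$ being $1$ is equivalent to primitivity of the corresponding column of $g$. (I should double-check the precise correspondence between ``columns of $g$ primitive'' and ``condition (3) for $\Lambda$'', but it is the standard duality between a sublattice and its ``defining'' integer matrix, so this is routine.) The extra scaling of one coordinate axis (by an integer) used to normalize the right-hand side to $\abs n$ does not spoil reducedness, since condition (3) is checked coordinatewise and the scaling only rescales one coordinate of every lattice vector by a fixed integer — here one must be slightly careful, and it may be cleaner to absorb that scaling into the choice of ${\bf w}_i$ from the start, or to observe directly that $\pi_1(\Lambda)=\pi_2(\Lambda)=\ZZ$ (condition (1)) is preserved.

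For part (2), the point is that computing $a_1=\gcd(a,\tfrac{b-\Delta}2)$ and then $a_2=a/a_1$, $c_1=\tfrac{b-\Delta}{2a_1}$, $c_2=c/c_1$ takes one Euclidean $\gcd$ plus a constant number of divisions, i.e.\ $O(\log\max\{\abs a,\abs b,\abs c\})$ arithmetic operations (using that $\Delta\le\sqrt{4\abs{ac}}\le 2\max\{\abs a,\abs c\}$, so $\Delta$ is not larger than the bound claimed). Each of $a_1,a_2,c_1,c_2$ is bounded in absolute value by $\max\{\abs a,\abs c\}$, hence the entries of ${\bf w}_1,{\bf w}_2$, and therefore of the matrix $g$, are bounded by $\max\{\abs a,\abs b,\abs c\}$ (the $b$ entering through the $c=0$ case); the basis of $\Lambda$ we output is $g^{-1}$ applied to ${\bf e}_1,{\bf e}_2$, which in reduced form (dividing by $\det$) again has entries bounded by these cofactors, i.e.\ by $\max\{\abs a,\abs b,\abs c\}$. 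The vector ${\bf p}$ is $-g^{-1}(x_0,y_0)$, computed with $O(1)$ further operations. The main obstacle I expect is not any single step but getting the normalization consistent: we have three competing requirements — the right-hand side should be exactly $\abs n$ (not $n/\Delta^2$ as in Proposition~\ref{prop:transform}), the lattice $\Lambda$ should sit \emph{inside} $\ZZ^2$ with index exactly $\Delta$, and $\Lambda$ should be reduced — and the axis rescaling needed for the first requirement interacts with the other two; I would handle this by first doing the unimodular-up-to-index change of basis (which gives index $\Delta$ and reducedness), arriving at $\{(x-x_0')(y-y_0')=\pm n/\Delta^{2}\}$ as in Proposition~\ref{prop:transform} but now \emph{read inside} $\ZZ^2$, and then applying the diagonal map $\operatorname{diag}(\Delta^2,1)$ (or $\operatorname{diag}(\Delta,\Delta)$, whichever keeps things integral and whichever the authors actually want) to clear the denominator, checking at the end that the resulting lattice is still a reduced sublattice of $\ZZ^2$ of determinant $\Delta$.
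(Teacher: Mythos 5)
Your central claim — that the linear map $g$ with $g(\mathbf{e}_i)=\mathbf{w}_i$ for primitive integer asymptote vectors $\mathbf{w}_i$ gives a \emph{sublattice} $\Lambda=g^{-1}(\ZZ^2)\subseteq\ZZ^2$ — has the inclusion backwards. Since the $\mathbf{w}_i$ are integer vectors, $g$ has an integer matrix $G$ with $|\det G|=\Delta>1$, so $g(\ZZ^2)\subset\ZZ^2$ is a sublattice; but $g^{-1}(\ZZ^2)=G^{-1}\ZZ^2\supseteq\ZZ^2$ is a \emph{superlattice} of $\ZZ^2$ of determinant $1/\Delta$. (Indeed $g^{-1}(\ZZ^2)\subseteq\ZZ^2$ would require $G$ unimodular.) This is exactly the superlattice $\Lambda'$ that Proposition~\ref{prop:transform} already produces; renaming it a sublattice does not make it one. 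Your subsequent duality remark about cofactors and the discussion of Definition~\ref{def:reduced_sublattice}(3) is built on top of this mis-identification, so as written the reducedness argument does not apply to the lattice you actually have.

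The last sentence of your first paragraph, however, essentially lands on the paper's actual proof: apply Proposition~\ref{prop:transform} to get the superlattice $\Lambda'\supseteq\ZZ^2$ of determinant $\Delta^{-1}$ and the hyperbola $\{xy=|n|/\Delta^2\}$, and then rescale. But the rescaling must be $\operatorname{diag}(\Delta,\Delta)$, not $\operatorname{diag}(\Delta^2,1)$: the generators of $\Lambda'$ are $\tfrac1\Delta(a_1,a_2)$ and $\tfrac1\Delta(c_2,c_1)$, so $\operatorname{diag}(\Delta,\Delta)\Lambda'$ has integer generators $(a_1,a_2),(c_2,c_1)$ and hence lies inside $\ZZ^2$ with determinant $\Delta$, whereas $\operatorname{diag}(\Delta^2,1)$ would leave the $y$-coordinates fractional. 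The diagonal scaling also turns $\{xy=|n|/\Delta^2\}$ into $\{xy=|n|\}$ as required. Once you fix this, part (1) follows directly — and more simply than via cofactor duality — because $\gcd(a,b,c)=1$ makes $(1,0),(0,1)$ primitive in $\Lambda'$ (this is stated in Proposition~\ref{prop:transform}), hence $(\Delta,0),(0,\Delta)$ are primitive in $\Lambda=\Delta\Lambda'$, which is precisely condition (2) of Definition~\ref{def:reduced_sublattice}. Your part (2) argument (one $\gcd$ plus $O(1)$ arithmetic, coordinate bounds from $a_1,a_2,c_1,c_2$) is correct and matches the paper.
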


\begin{proof}
    Via Proposition~\ref{prop:transform} we first transform the hyperbola into
    $\left\{xy = \abs{n}/\Delta^2\right\}$, with an affine map
    that sends $\ZZ^2$ to an affine lattice ${\bf q}+\Lambda'$ where ${\bf q} \in \RR^2$ and $\Lambda' \supseteq \ZZ^2$ is a superlattice of
    determinant $\Delta^{-1}$.
    By the calculations in the proof of that proposition, the point ${\bf q}$ is obtained as
%\[
%        f(x,y)=
%        \frac{1}{\Delta}
%        \begin{pmatrix}
%            a_1 & c_2 \\
%            a_2 & c_1
%        \end{pmatrix}
%        \begin{pmatrix}
%            x \\
%            y
%        \end{pmatrix}
%    \]
%   
    \[
        {\bf q} = - f(x_0,y_0)=-\frac{1}{\Delta}
        \begin{pmatrix}
            a_1 & c_2 \\
            a_2 & c_1
        \end{pmatrix}
        \begin{pmatrix}
            x_0 \\
            y_0
        \end{pmatrix},
    \]
    where $f$ is the linear map associated with the affine map, and $a_1,a_2, c_1,c_2$ are integers bounded in absolute value by $\max\{\abs{a}, \abs{b}, \abs{c}\}$ (this holds even in the case $ac=0$).
    Thus, scaling both coordinates by $\Delta $ the hyperbola is further transformed to the
    standard hyperbola $\left\{xy = \abs{n}\right\}$, and the lattice to ${\bf p} +\Lambda$,
    where $\Lambda\coloneqq \Delta \,\Lambda'$ is a sublattice of $\ZZ^2$ of determinant $\Delta $ and ${\bf p}\coloneqq \Delta{\bf q}$.

    The first property after the ``moreover'' follows from the fact that if $\gcd(a,b,c)=1$ then $(1,0)$ and $(0,1)$ are primitive in $\Lambda'$, hence $(\Delta,0)$ and $(0,\Delta)$ are primitive in $\Lambda$.

    For the second property, we multiply by $\Delta$ the basis of $\Lambda'$ obtained naturally in Proposition~\ref{prop:transform}, that is, the image of ${\bf e}_1, {\bf e}_2$ by the linear transformation $f$. More precisely, the basis is:
    \[
        {\bf b}_1= \Delta f({\bf e}_1)=  %\Delta \begin{pmatrix}
           % a_1 & c_2 \\
           % a_2 & c_1
        %\end{pmatrix}
        %{\bf e}_1=
             \Delta \begin{pmatrix}
            a_1 \\
            a_2
        \end{pmatrix},
        \quad
        {\bf b}_2 =    \Delta f({\bf e}_1)=%\Delta \begin{pmatrix}
           % a_1 & c_2 \\
            %a_2 & c_1
        %\end{pmatrix}
        %{\bf e}_2=
             \Delta \begin{pmatrix}
            c_2 \\
            c_1
        \end{pmatrix}.
    \]
    As seen in Proposition~\ref{prop:transform}, to compute the numbers $a_1,a_2, c_1, c_2$ we can take $a_1=\gcd(a, \tfrac{b - \Delta}2)$, which is computed in $O(\log \max\{\abs{a}, \abs{b}, \abs{c}\})$, and compute the rest from $a_1$ in $O(1)$ time.
\end{proof}

Our algorithms use a specific basis of any lattice $\Lambda\subset \QQ^2$ that we call standard:
\begin{definition}
    \label{def:standard_basis}
    Let $\Lambda \subset \QQ^2$ be a rational lattice.
    The \emph{standard basis} of $\Lambda$ is the unique basis ${\bf b}_1 = ({{\bf b}_1}_x, {{\bf b}_1}_y)$, ${\bf b}_2 = (0, {{\bf b}_2}_y)$ satisfying
    \begin{equation}
        \label{eq:basis}
        {{\bf b}_1}_x > 0, \quad {{\bf b}_2}_y > 0, \quad 0 \le {{\bf b}_1}_y < {{\bf b}_2}_y.
    \end{equation}
\end{definition}

The standard basis of a  reduced lattice $\Lambda$ has the form ${\bf b}_1 = (1, {{\bf b}_1}_y)$, ${\bf b}_2 = (0, \det \Lambda)$, with ${{\bf b}_1}_y \in \{0,\dots, \det \Lambda -1\}$. More generally:

\begin{proposition}
    \label{prop:standard_basis}
    The standard basis exists and is unique for every rational lattice $\Lambda\subset \QQ^2$.
    It can be computed from an arbitrary basis of $\Lambda$ in time linear on the bit size of that basis. If $\Lambda\subset \ZZ^2$, its coordinates are bounded by $\det(\Lambda)$.
\end{proposition}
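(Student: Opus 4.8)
The plan is to prove existence, uniqueness, the complexity bound, and the coordinate bound, in that order.

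\emph{Existence.} Starting from an arbitrary basis $\{{\bf c}_1,{\bf c}_2\}$ of $\Lambda$, I first want to arrange that one basis vector is vertical. The $x$-coordinates ${{\bf c}_1}_x,{{\bf c}_2}_x$ are rationals; if both are nonzero, a suitable integer combination (obtained by running the Euclidean algorithm on their numerators, after clearing denominators) replaces ${\bf c}_2$ by a vector whose $x$-coordinate is $0$ while keeping a basis — this is exactly the first column-reduction step of Hermite normal form. After possibly swapping and negating we may assume ${{\bf c}_1}_x>0$ and ${{\bf c}_2}=(0,{{\bf c}_2}_y)$ with ${{\bf c}_2}_y>0$. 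Finally, replacing ${\bf c}_1$ by ${\bf c}_1 - \lfloor {{\bf c}_1}_y/{{\bf c}_2}_y\rfloor\,{\bf c}_2$ brings the $y$-coordinate of the first vector into the interval $[0,{{\bf c}_2}_y)$ without changing ${{\bf c}_1}_x$ or the lattice. This produces a basis satisfying~\eqref{eq:basis}.

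\emph{Uniqueness.} Suppose $\{{\bf b}_1,{\bf b}_2\}$ and $\{{\bf b}_1',{\bf b}_2'\}$ both satisfy~\eqref{eq:basis}. Since $\Lambda\cap(\{0\}\times\RR)$ is a rank-one lattice generated by its shortest positive vector, and both ${\bf b}_2$ and ${\bf b}_2'$ lie in it with positive $y$-coordinate, minimality forces ${\bf b}_2={\bf b}_2'$: indeed ${{\bf b}_2}_y$ must be the smallest positive value of $\pi_2$ on $\Lambda\cap(\{0\}\times\RR)$, because otherwise ${\bf b}_1$ could not have $x$-coordinate equal to the minimal positive value of $\pi_1(\Lambda)$ — any vector in $\Lambda$ with that minimal positive $x$-coordinate differs from ${\bf b}_1$ by an element of $\Lambda\cap(\{0\}\times\RR)$. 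Now ${{\bf b}_1}_x={{\bf b}_1'}_x$ is this minimal positive value of $\pi_1(\Lambda)$ (it divides the $x$-coordinate of every lattice vector, hence must agree for both bases), so ${\bf b}_1-{\bf b}_1'\in\Lambda\cap(\{0\}\times\RR)=\ZZ\,{\bf b}_2$; the constraint $0\le{{\bf b}_1}_y,{{\bf b}_1'}_y<{{\bf b}_2}_y$ then forces ${\bf b}_1={\bf b}_1'$.

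\emph{Complexity and bounds.} Each of the three steps above is a constant number of arithmetic operations except the Euclidean algorithm on the (numerators of the) $x$-coordinates, which runs in time linear in the bit size of the input basis; this gives the stated complexity. For the coordinate bound when $\Lambda\subseteq\ZZ^2$: after the reduction ${{\bf b}_1}_x$ divides $\det\Lambda$ (since $\det\Lambda={{\bf b}_1}_x\,{{\bf b}_2}_y$ with ${{\bf b}_2}_y\in\ZZ_{>0}$), so ${{\bf b}_1}_x\le\det\Lambda$; likewise ${{\bf b}_2}_y\le\det\Lambda$; and ${{\bf b}_1}_y<{{\bf b}_2}_y\le\det\Lambda$. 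The only mild subtlety — the step I expect to require the most care in writing up — is the uniqueness argument, specifically pinning down that ${{\bf b}_2}_y$ is forced to be the minimal positive value of $\pi_2$ on the vertical sublattice; once that is stated cleanly the rest is immediate. I would also remark in passing that for a reduced $\Lambda$ one has ${{\bf b}_1}_x=\gcd$ of all $x$-coordinates $=1$ and ${{\bf b}_2}_y=\det\Lambda$, recovering the special form stated before the proposition.
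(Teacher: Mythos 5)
Your proof is correct and follows essentially the same route as the paper: compute ${{\bf b}_1}_x$ as the $\gcd$ of the $x$-coordinates (via the Euclidean algorithm, which dominates the running time), obtain ${{\bf b}_2}_y$ from the determinant, reduce ${{\bf b}_1}_y$ modulo ${{\bf b}_2}_y$, and get the coordinate bound from $\det\Lambda = {{\bf b}_1}_x{{\bf b}_2}_y$. One small wrinkle in the uniqueness step: the clause ``because otherwise ${\bf b}_1$ could not have $x$-coordinate equal to the minimal positive value of $\pi_1(\Lambda)$'' is not the right reason — the minimality of ${{\bf b}_2}_y$ and of ${{\bf b}_1}_x$ are each forced \emph{independently} by the basis property alone (any $(0,v)\in\Lambda$ written as $a{\bf b}_1+b{\bf b}_2$ must have $a=0$ since ${{\bf b}_1}_x\neq 0$, so ${\bf b}_2$ generates $\Lambda\cap(\{0\}\times\RR)$; and $\pi_1(\Lambda)={{\bf b}_1}_x\ZZ$ for the same reason), so neither condition needs to be deduced from the other.
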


\begin{proof}
    Existence and uniqueness follows from the following description:
    \begin{align*}
        {{\bf b}_1}_x &= \min\{x > 0: (x, y) \in \Lambda \text{ for some $y$}\}, \\
        {{\bf b}_2}_y &= \min\{y > 0: (0, y) \in \Lambda\}, \\
        {{\bf b}_1}_y &= {\bf b}_y \bmod {{\bf b}_2}_y,
    \end{align*}
    where ${\bf b}_y \in \QQ$ is any number such that $({{\bf b}_1}_x, {\bf b}_y) \in \Lambda$ and, for two rational numbers $p, q$ we denote
    \[
        p \bmod q \coloneqq \ceil{p/q}q - p.
    \]

    These three numbers can be computed from an arbitrary basis ${\bf w}_1, {\bf w}_2$ of $\Lambda$ as follows:
    \begin{align*}
        {{\bf b}_1}_x &= \gcd({{\bf w}_1}_x, {{\bf w}_2}_x), \\
        {{\bf b}_2}_y &= \det(\Lambda)/{{\bf b}_1}_x,\\
        {{\bf b}_1}_y &= (s\cdot{{\bf w}_1}_y + t\cdot{{\bf w}_2}_y) \bmod {{\bf b}_2}_y,\quad \text{ where }{{\bf b}_1}_x = s\cdot{{\bf w}_1}_x + t\cdot{{\bf w}_2}_x.
    \end{align*}
    (That is, $s$ and $t$ are the Bezout coefficients obtained from the extended Euclidean algorithm in the computation of $\gcd({{\bf w}_1}_x, {{\bf w}_2}_x)$).

    The complexity of this computation is dominated by the $\gcd$; hence, in the arithmetic model, it is linear in the bit size of the input basis. The size of the coordinates follows from the fact that $\det(\Lambda) = {{\bf b}_1}_x {{\bf b}_2}_y$ and ${{\bf b}_1}_y < {{\bf b}_2}_y$.
\end{proof}

\subsection{The algorithms}\label{subsec:putting-things-together}

Via the transformations from the previous section, in this section we assume without loss of generality that we are given an $n>0$, ${\bf p} \in H_n$ and the standard basis $({\bf b}_1, {\bf b}_2)$ of a rational lattice $\Lambda\subset \ZZ^2$.
We want to compute the vertices of $\conv{H_{n,{\bf p}+\Lambda}}$.
In some of the statements (typically those related to complexity bounds) we need to assume that $\Lambda$ is a reduced sublattice of $\ZZ^2$. 

\subsubsection{Finding the first vertex}

The following result tells us how to find the vertex with the smallest $x$-coordinate.

\begin{proposition}
\label{prop:first_vertex}
Let ${\bf p} \in H_n$ and let $\Lambda\subset \QQ^2$ be a rational lattice.
Having a standard basis of $\Lambda$, the first vertex of $\conv{H_{n,{\bf p} + \Lambda}}$ can be computed in $O(1)$ arithmetic operations.
\end{proposition}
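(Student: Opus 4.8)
The plan is to write the first vertex down explicitly from the standard basis, using only two rounding operations and a bounded number of arithmetic operations.

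First I would recall, as in the proof of Lemma~\ref{lem:square}, that the vertex of smallest $x$-coordinate lies on the leftmost vertical lattice line of ${\bf p}+\Lambda$ meeting the open first quadrant: every point of $H_{n,{\bf p}+\Lambda}$ has positive coordinates, every vertical line $\{x=c\}$ with $c>0$ meets $H_n$ (it contains $(c,n/c)$), and on the leftmost such line the lowest lattice point of $H_n$ is an extreme point of $\conv{H_{n,{\bf p}+\Lambda}}$, since no point of $H_{n,{\bf p}+\Lambda}$ lies strictly to its left. So it suffices to produce that line and that point in $O(1)$ operations.

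Then I would exploit the shape of the standard basis $({\bf b}_1,{\bf b}_2)$, with ${\bf b}_1=({{\bf b}_1}_x,{{\bf b}_1}_y)$, ${{\bf b}_1}_x>0$, and ${\bf b}_2=(0,{{\bf b}_2}_y)$, ${{\bf b}_2}_y>0$. Since the second basis vector is vertical, every lattice element $a{\bf b}_1+b{\bf b}_2$ has $x$-coordinate $a{{\bf b}_1}_x$, so the $x$-coordinates occurring in ${\bf p}+\Lambda$ are exactly $\{{\bf p}_x+k{{\bf b}_1}_x:k\in\ZZ\}$; hence the leftmost one that is positive is
\[
    x^\star \coloneqq {\bf p}_x+k^\star {{\bf b}_1}_x, \qquad k^\star \coloneqq 1-\ceil{{\bf p}_x/{{\bf b}_1}_x},
\]
computed with a single ceiling. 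By uniqueness of the coordinates of a lattice point in the basis, the lattice points of ${\bf p}+\Lambda$ on the line $\{x=x^\star\}$ are exactly $(x^\star,\,y_0+j{{\bf b}_2}_y)$, $j\in\ZZ$, where $y_0\coloneqq {\bf p}_y+k^\star {{\bf b}_1}_y$. Finally I would take the lowest of these lying above the hyperbola, that is, with $y\ge n/x^\star$, which is
\[
    y^\star \coloneqq y_0 + \ceil{\frac{n/x^\star - y_0}{{{\bf b}_2}_y}}\,{{\bf b}_2}_y,
\]
again one ceiling. Then $(x^\star,y^\star)$ is the bottommost lattice point of $H_n$ on the leftmost occupied vertical line, hence by the first step the first vertex of $\conv{H_{n,{\bf p}+\Lambda}}$, and the whole computation consists of a bounded number of additions, multiplications, divisions and two ceilings, i.e.\ $O(1)$ arithmetic operations.

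I do not expect a real obstacle here; the only point that needs care is the rounding defining $x^\star$, so that it is strictly positive even when ${\bf p}_x$ is itself a lattice $x$-coordinate. The choice $k^\star=1-\ceil{{\bf p}_x/{{\bf b}_1}_x}$ takes care of it: it gives $x^\star={{\bf b}_1}_x$ in that degenerate case and a value in $(0,{{\bf b}_1}_x)$ otherwise, and in all cases $x^\star\le{\bf p}_x$ because ${\bf p}_x>0$ already belongs to the progression $\{{\bf p}_x+k{{\bf b}_1}_x\}$ --- so we genuinely land on the leftmost vertical lattice line.
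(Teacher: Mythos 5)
Your proposal is correct and follows essentially the same approach as the paper: locate the leftmost vertical lattice line of ${\bf p}+\Lambda$ in the open first quadrant with one rounding operation, then the bottommost point of $H_{n,{\bf p}+\Lambda}$ on that line with a second one, using the shape of the standard basis (${\bf b}_2$ vertical) to justify both formulas. The paper writes the first step as ${\bf q} = {\bf p} - \floor{({\bf p}_x - 1)/{{\bf b}_1}_x}\,{\bf b}_1$ instead of your $k^\star = 1 - \ceil{{\bf p}_x/{{\bf b}_1}_x}$, which agrees with yours when ${\bf p}_x$ is an integer; your ceiling-based expression is arguably more robust when ${\bf p}_x$ is not an integer multiple of ${{\bf b}_1}_x$'s granularity, but otherwise the two proofs are the same.
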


\begin{proof}
    The first vertical line of ${\bf p}+\Lambda$ intersecting the positive open quadrant has
    $x$-coordinate congruent to ${\bf p}_x$ modulo ${{\bf b}_1}_x$. Hence, one of its lattice points is
    \[
        {\bf q} \coloneqq {\bf p} - \floor{\frac{{\bf p}_x - 1}{{{\bf b}_1}_x}} {\bf b}_1.
    \]
    The first vertex of $\conv{H_n,{\bf p} + \Lambda}$ is just the bottommost point of $H_{n,{\bf p}+\Lambda}$ along that line, which equals
    \[
        {\bf q} + \ceil{\frac{n/{\bf q}_x - {\bf q}_y}{{{\bf b}_2}_y}} {\bf b}_2. \qedhere
    \]
\end{proof}

\subsubsection{Finding the next vertex}

To find the next vertex of a given vertex we introduce the following concepts.

\begin{definition}
    Let ${\bf p} \in  H_n$ and let $\Lambda \subseteq \QQ^2$ be a
    rational lattice.
    We denote by $\Minsl({\bf p})$ the primitive non-zero vector
    with the smallest slope in
    \[
        (H_{n,{\bf p}+\Lambda} - {\bf p}) \cap \left\{(x, y): x \ge 0\right\} \subset \Lambda,
    \]
    and by
    $\Nextpt({\bf p})$ the furthest point of $H_{n,{\bf p}+\Lambda}$ along the ray from ${\bf p}$ in that direction; that is,
    \begin{align*}
        &\Nextpt({\bf p}) \coloneqq
            {\bf p} + k\cdot\Minsl({\bf p}),
            \qquad \text{ where}\\
        &\,k = \max{\{j \in \ZZ: {\bf p} + j\cdot\Minsl({\bf p}) \in H_n\}} \ge 1,
    \end{align*}
    If $ k = \infty$ we take $\Nextpt({\bf p}) ={\bf \infty}$.
\end{definition}

\begin{remark}
    Let ${\bf b}_1 = ({{\bf b}_1}_x, {{\bf b}_1}_y), {\bf b}_2 = (0, {{\bf b}_2}_y)$ be the standard basis of a rational lattice $\Lambda$, and  ${\bf p}= ({\bf p}_x, {\bf p}_y) \in H_n$.
    \begin{itemize}
        \item If ${\bf p} - {\bf b}_2 \in H_n$ (equivalently, if ${\bf p}_y \ge n/{\bf p}_x + {{\bf b}_2}_y$) then $\Minsl({\bf p}) = -{\bf b}_2$ and
        $\Nextpt({\bf p})$ is the bottommost lattice point of $H_{n,{\bf p}+\Lambda}$ in the vertical line containing ${\bf p}$.
        Put differently,
        \[
            \Nextpt({\bf p}) = {\bf p} + \ceil{\frac{n/{\bf p}_x- {\bf p}_y }{{{\bf b}_2}_y}} {\bf b}_2.
        \]
        Moreover, $\Nextpt({\bf p})$ is the first vertex of $\conv{H_{n,{\bf p}+\Lambda} \cap\{x \ge {\bf p}_x\}}$.

        \item If ${\bf p} - {\bf b}_2 \notin H_n$ (equivalently, if $n/{\bf p}_x \le {\bf p}_y < n/{\bf p}_x + {{\bf b}_2}_y$) then ${\bf p}$ is the first vertex of
        $
            \conv{H_{n,{\bf p}+\Lambda} \cap\{x \ge {\bf p}_x\}}
        $
        and
        $\Nextpt({\bf p})$ is the next one (or $\infty$, if there is no vertex after ${\bf p}$).
%        \item
%        \pacoS{We do not need this case anymore, do we?}
%        If ${\bf p}_y < n/{\bf p}_x$, that is, when ${\bf p} \notin H_n$, $\Minsl({\bf p})$ is still well-defined and it equals the vertex of
%        $\conv{H_{n,{\bf p}+\Lambda}}$ most to the right that is visible from ${\bf p}$.
%% In this case, computing $\Nextpt({\bf p})$ knowing $\Minsl({\bf p})$
%% may be done lattice ray casts.
    \end{itemize}
\end{remark}

Hence:

\begin{corollary}
\label{cor:nextpt_is_next_vertex}
    If ${\bf p}$ is a vertex of $\conv{H_{n,{\bf p}+\Lambda}}$ then $\Nextpt({\bf p})$ is the next one.
    \qed
\end{corollary}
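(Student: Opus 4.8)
The plan is to read the corollary off the second bullet of the preceding Remark. The first step is to verify that its hypothesis holds, i.e.\ that a vertex ${\bf p}$ of $\conv{H_{n,{\bf p}+\Lambda}}$ necessarily satisfies ${\bf p}-{\bf b}_2\notin H_n$. If it did not, then ${\bf p}-{\bf b}_2$ and ${\bf p}+{\bf b}_2$ would both lie in $H_{n,{\bf p}+\Lambda}$ (the former by assumption, the latter because it sits on the same vertical lattice line as ${\bf p}$ but with larger ordinate, and $H_n$ is upward-closed along vertical lines), so ${\bf p}$, being their midpoint, could not be an extreme point of $\conv{H_{n,{\bf p}+\Lambda}}$. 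Hence the second bullet of the Remark applies and tells us that $\Nextpt({\bf p})$ is the vertex of $\conv{H_{n,{\bf p}+\Lambda}\cap\{x\ge {\bf p}_x\}}$ immediately following ${\bf p}$, or $\infty$ if there is none.

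The second step is to identify this with the vertex of $\conv{H_{n,{\bf p}+\Lambda}}$ immediately following ${\bf p}$, that is, with the ``next vertex'' in the sense of Theorem~\ref{thm:main_algorithm}. I would do this through the set identity $\conv{H_{n,{\bf p}+\Lambda}\cap\{x\ge {\bf p}_x\}}=\conv{H_{n,{\bf p}+\Lambda}}\cap\{x\ge {\bf p}_x\}$: the inclusion $\subseteq$ is immediate, while for $\supseteq$ the point is that the vertical line $x={\bf p}_x$ meets $\conv{H_{n,{\bf p}+\Lambda}}$ exactly in the ray emanating upward from the vertex ${\bf p}$ — the lower chain of vertices of $\conv{H_{n,{\bf p}+\Lambda}}$ reaches abscissa ${\bf p}_x$ precisely at ${\bf p}$, since consecutive vertices have strictly increasing $x$-coordinate — so clipping along that line introduces no new vertex, and the clipped hull is generated by the vertices of $\conv{H_{n,{\bf p}+\Lambda}}$ lying in $\{x\ge {\bf p}_x\}$ together with the first quadrant as recession cone, all of which are already present after clipping. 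The two polyhedra thus have the same vertices in the half-plane $\{x\ge {\bf p}_x\}$, and since ${\bf p}$ is the one with smallest abscissa among them in both cases, the vertex immediately after ${\bf p}$ is the same for both. Together with the first step, this finishes the proof.

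I expect the only real friction to be in making the second step rigorous — the intuitively clear fact that slicing a two-dimensional convex polyhedron along a line through one of its vertices leaves the polyhedron unchanged on the far side of that line — and in the bookkeeping for the degenerate case where ${\bf p}$ is the last vertex of the hull: there, ``no vertex after ${\bf p}$'' must be reconciled with $\Nextpt({\bf p})=\infty$, which holds because $\Minsl({\bf p})$ is then horizontal and translating to the right keeps one inside $H_n$, so the parameter $k$ defining $\Nextpt({\bf p})$ is infinite.
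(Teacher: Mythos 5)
Your proof is correct and follows the paper's implicit reasoning exactly: you first verify that a vertex ${\bf p}$ of $\conv{H_{n,{\bf p}+\Lambda}}$ necessarily satisfies ${\bf p}-{\bf b}_2\notin H_n$ (since otherwise ${\bf p}$ would be the midpoint of ${\bf p}\pm{\bf b}_2$, both of which lie in $H_{n,{\bf p}+\Lambda}$), so that the second bullet of the preceding Remark applies, and you then identify the next vertex of the clipped hull $\conv{H_{n,{\bf p}+\Lambda}\cap\{x\ge{\bf p}_x\}}$ with that of the full hull via the observation that the line $x={\bf p}_x$ meets $\conv{H_{n,{\bf p}+\Lambda}}$ in the upward ray from ${\bf p}$. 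The paper states the corollary as an immediate consequence of the Remark, and your write-up supplies exactly the two verification steps the authors left implicit, including the degenerate case ${\bf v}=\infty$.
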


To analyze the complexity of our main algorithm we need the following:

\begin{lemma}
\label{lemma:exception}
    Let $\Lambda\subset \ZZ^2$ be a reduced lattice and let
     ${\bf p} \in H_n$.
     \begin{enumerate}
     \item If ${\bf p}_y\in (1,2)$ then $\Nextpt({\bf p})$ can be computed from the standard basis of $\Lambda$ in $O(\log \det \Lambda)$ operations.
     \item If ${\bf p}_y\notin (1,2)$ then the $x$ coordinate of $\Minsl({\bf p})$ is bounded above by $2n+2\det(\Lambda)$.
     \end{enumerate}
\end{lemma}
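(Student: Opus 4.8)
The two parts are independent, so I would treat them separately. For part (1), the hypothesis ${\bf p}_y \in (1,2)$ forces the lattice line containing ${\bf p}$ to be close to the $x$-axis, and since ${\bf p} \in H_n$ means ${\bf p}_x \ge n/{\bf p}_y > n/2$, the point ${\bf p}$ is already far to the right of the hyperbola. The key geometric observation is that there can be at most one further vertex after ${\bf p}$ before the convex hull ``closes up'' against the $x$-axis: indeed, any vertex ${\bf q}$ with ${\bf q}_x > {\bf p}_x$ has ${\bf q}_y < {\bf p}_y < 2$, and because $\Lambda$ is reduced its horizontal lattice lines are spaced at distance $\det\Lambda/\gcd(\cdot) \ge 1$ apart — wait, more precisely, reduced means $\pi_2(\Lambda)=\ZZ$, so consecutive lattice points in a vertical line differ by ${{\bf b}_2}_y = \det\Lambda$ in the $y$-coordinate, but consecutive \emph{lattice lines} (vertical) are at $x$-distance ${{\bf b}_1}_x = 1$. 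So the strip $1 < y < 2$ contains lattice points, but only finitely many can be vertices. I would argue that $\Nextpt({\bf p})$ is either $\infty$ or lies in the band $\{1 \le y < 2\}$, and in the latter case it is found by a single explicit computation: walk to the next vertical lattice line (shift by ${\bf b}_1$, which changes $x$ by $1$ and $y$ by ${{\bf b}_1}_y \in \{0,\dots,\det\Lambda-1\}$), then drop down to the floor of the hyperbola using ${\bf b}_2$; the only cost is computing $\Nextpt$ via the $O(\log\det\Lambda)$ procedure of the next subsection (Algorithm~\ref{alg:compute_nextpt}), or handling the at-most-one remaining vertex directly. The $\log\det\Lambda$ appears only through the reduction of ${{\bf b}_1}_y$ modulo ${{\bf b}_2}_y$ and the ceiling division, each of which is a single arithmetic operation in our model, so in fact the bound is generous.

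For part (2), I want an a priori bound on the $x$-coordinate of $\Minsl({\bf p})$ when ${\bf p}_y \notin (1,2)$. The vector $\Minsl({\bf p})$ is primitive in $\Lambda$, has $x$-coordinate $\ge 0$, and points from ${\bf p}$ to the next vertex ${\bf q} = \Nextpt({\bf p})$ — or rather, $k\cdot\Minsl({\bf p}) = {\bf q}-{\bf p}$ for some $k\ge 1$. The idea is that the next vertex cannot be too far to the right: once $x$ exceeds roughly $n$, the hyperbola floor $n/x$ drops below $1$, and combined with the lattice spacing this pins down where vertices can live. Concretely: if ${\bf p}_y \ge 2$, then either ${\bf p}-{\bf b}_2 \in H_n$ (so $\Minsl({\bf p}) = -{\bf b}_2$, which has $x$-coordinate $0 \le 2n+2\det\Lambda$ trivially), or ${\bf p}$ is a vertex and the next vertex ${\bf q}$ satisfies ${\bf q}_y < {\bf p}_y$; since ${\bf q} \in H_n$ we get ${\bf q}_x \le n/({\bf q}_y - \text{something})$... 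I would bound ${\bf q}_x$ by noting that all vertices strictly between ${\bf p}$ and the band $y<2$ lie in the region $\{x \le n/(1) + \det\Lambda\} = \{x \le n + \det\Lambda\}$ because the bottommost lattice point above the hyperbola in a vertical line at abscissa $x$ has $y < n/x + \det\Lambda \le n/x + \det\Lambda$, and for this to be $\ge 1$ we need roughly $x \le n + $ (spacing correction). Then ${\bf q}_x - {\bf p}_x \le {\bf q}_x \le n + \det\Lambda$ — but I also need to account for the symmetric case ${\bf p}_y \le 1$, where ${\bf p}$ is deep in the lower-right and $\Minsl$ could have a positive-slope branch; there the $x$-coordinate of $\Minsl({\bf p})$ is at most ${\bf q}_x - {\bf p}_x \le {\bf q}_x$, and the factor $2$ in $2n+2\det\Lambda$ is the slack I would keep to absorb these boundary cases cleanly, together with the fact that $k\cdot\Minsl({\bf p})_x = {\bf q}_x - {\bf p}_x$ with $k \ge 1$ means $\Minsl({\bf p})_x \le {\bf q}_x - {\bf p}_x$ anyway.

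The main obstacle I anticipate is part (2): making the ``next vertex is not too far right'' argument fully rigorous requires carefully handling the case where ${\bf p}$ itself is very far to the right already (so that ${\bf p}_x$ could be huge), in which case the claim is really that $\Minsl({\bf p})$ — a \emph{direction} vector, not the displacement to the next vertex — has bounded $x$-coordinate. This is where primitivity of $\Minsl({\bf p})$ in $\Lambda$ and the structure of the standard basis must be used: the minimal-slope primitive vector in the relevant cone can be read off from a continued-fraction / Euclidean-type analysis of the basis $\{{\bf b}_1,{\bf b}_2\}$ against the constraint of staying above the hyperbola, and one shows its $x$-component is governed by where the hyperbola becomes ``flat enough'' relative to the lattice, which happens by $x \approx n$. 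I would isolate the subcases ${\bf p}_y \ge 2$ and ${\bf p}_y \le 1$, reduce the first to ``${\bf p}$ is a vertex and ${\bf p}-{\bf b}_2 \notin H_n$'' via the Remark preceding Corollary~\ref{cor:nextpt_is_next_vertex}, and in that subcase bound the next vertex's abscissa using $n/x + \det\Lambda \ge 1$; the subcase ${\bf p}_y \le 1$ is handled symmetrically by the fact that then ${\bf p}_x \ge n$ already and $\Minsl({\bf p})$ must have small positive $x$-coordinate (at most $1 = {{\bf b}_1}_x$ in the reduced case, since moving right by one lattice line suffices to find the next vertex or to exit to infinity).
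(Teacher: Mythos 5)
Your proposal identifies the right geometric intuition but has genuine gaps in both parts, and the central trick for part (2) is missing.

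\textbf{Part (2).} You try to bound ${\bf q}_x$ itself by $n + \det\Lambda$, but this is false: when ${\bf p}_y$ is small, ${\bf p}_x$ (and hence ${\bf q}_x > {\bf p}_x$) can be arbitrarily large. What must be bounded is the \emph{displacement} ${\bf q}_x - {\bf p}_x$, and the naive bound ${\bf q}_x - {\bf p}_x \le \max\{0,\, n/{\bf q}_y - {\bf p}_x\} + \det\Lambda$ is useless when ${\bf q}_y$ is close to $0$. You acknowledge this as ``the main obstacle'' but do not resolve it --- the appeal to a ``continued-fraction / Euclidean-type analysis'' is hand-waving, and the guessed bound $\le 1$ for the subcase ${\bf p}_y \le 1$ is wrong: there $\Minsl({\bf p}) = (\det\Lambda, 0)$, with $x$-coordinate $\det\Lambda$, not $1$. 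The paper closes the gap with a construction you don't consider: introduce ${\bf p}' \in {\bf p}+\Lambda$, the first lattice point above the hyperbola at the $y$-level $\fra({\bf p}_y)+1$ with ${\bf p}'_x \ge {\bf p}_x$. Applying the displacement bound to ${\bf p}'$ (whose $y$-level is $> 1$) gives ${\bf p}'_x - {\bf p}_x \le n + \det\Lambda$, and then a slope comparison between ${\bf q}-{\bf p}'$ and ${\bf p}'-{\bf p}$ derives a contradiction if ${\bf q}_x - {\bf p}_x > 2n+2\det\Lambda$ unless ${\bf p}' = {\bf p}$, which forces ${\bf p}_y \in (1,2)$. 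That two-step argument (bound the intermediate point, then compare slopes) is the content of the proof.

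\textbf{Part (1).} Two issues. First, a minor one: you claim $\Nextpt({\bf p})$ lies in $\{1 \le y < 2\}$, but in fact ${\bf q}_y = {\bf p}_y - 1 \in (0,1)$, since $\Lambda$ reduced forces $y$-levels of ${\bf p}+\Lambda$ to differ by integers. Second, and more seriously, you invoke ``the $O(\log\det\Lambda)$ procedure of the next subsection (Algorithm~\ref{alg:compute_nextpt})'' to compute $\Nextpt$. This is circular: Theorem~\ref{thm:exists_find_nextpt} explicitly excludes the case ${\bf p}_y \in (1,2)$ from its complexity guarantee, and Lemma~\ref{lemma:exception} exists precisely to fill that hole. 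The paper's actual method is to observe that $\Nextpt({\bf p})$ equals the \emph{last} vertex of the hull shifted right by a multiple of $(\det\Lambda,0)$; the last vertex is found by reflecting the lattice across the diagonal $\{x=y\}$, computing the standard basis of the reflected lattice (this gcd computation is the source of the $O(\log\det\Lambda)$), and applying Proposition~\ref{prop:first_vertex} with coordinates swapped. Your idea of ``shift by ${\bf b}_1$ and drop down'' only advances one vertical lattice line, which does not locate ${\bf q}$ when it is many lines away.
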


\begin{proof}
    We assume $\Nextpt({\bf p})\ne \infty$ (equivalently, ${\bf p}_y > 1$), since otherwise $\Minsl({\bf p}) = (\det(\Lambda), 0)$. Let ${\bf q} = \Nextpt({\bf p}) $.
    Observe that ${\bf q}$ is the first lattice point in the horizontal half-line $\{y={\bf q}_y\} \cap \{x \ge \max({\bf p}_x, n/ {\bf q}_y)\}$.
    In particular, ${\bf q}_x$ is bounded above by $\max\{{\bf p}_x, n/ {\bf q}_y\} + \det \Lambda$ and the $x$-coordinate of $\Minsl({\bf p})$ is bounded by
    \begin{align}
    \label{eq:nextpt_x}
    {\bf q}_x - {\bf p}_x \le \max\{0, n/ {\bf q}_y-{\bf p}_x\} + \det \Lambda.
    \end{align}

    If $1 < {\bf p}_y < 2$ we clearly have ${\bf q}_y = {\bf p}_y - 1$ and the observation above allows us to compute ${\bf q}_x$ as follows:
    Let ${\bf q}'$ be the last vertex of $\conv{H_{n,{\bf p} +\Lambda}}$, which is the point in ${\bf p} + \Lambda$ with ${\bf q}'_y = {\bf p}_y -1$ with the minimum $x$-coordinate beyond $n/{\bf q}'_y$.
    Take:
    \begin{align*}
    \label{eq:last_vertex}
    {\bf q} = {\bf q}' + \ceil{ \frac{ \max({\bf p}_x - {\bf q}'_x, 0)}{\det \Lambda} }  (\det \Lambda,0).
    \end{align*}
    To find ${\bf q}'$ we first need to compute the standard basis of the reflected lattice $\Lambda'\coloneqq\{(y,x): (x,y)\in \Lambda\}$, which takes $O(\log \det \Lambda)$ operations.
    After that, we can apply Proposition~\ref{prop:first_vertex} with the $x$ and $y$ coordinates swapped, which takes $O(1)$ operations more.

    For the second part of the statement we assume ${\bf q}_x - {\bf p}_x > 2n+2\det \Lambda$. Let us denote $\fra( {\bf p}_y) \coloneqq  {\bf p}_y - \floor{{\bf p}_y}$ and let ${\bf p}' \in {\bf p} + \Lambda$ be the first lattice point with ${\bf p}'_y=  \fra({\bf p}_y)+1$ and
    $
    {\bf p}'_x \ge \max \{ {\bf p}_x, n/{\bf p}_y'\}.
    $
    Our claim is that in fact ${\bf p}' = {\bf p}$, which implies the statement since then
    \[
    {\bf p}_y =  {\bf p}'_y  = \fra({\bf p}_y)+1 <2.
    \]

    We prove the claim by contradiction, so suppose ${\bf p}' \ne {\bf p}$.
    By construction of ${\bf p}' $ we have that ${\bf p}' \in H_{n, {\bf p} + \Lambda} \cap \{x \ge {\bf p}_x\}$, so the slope of ${\bf q}- {\bf p}$ should be less than (or equal to) the slope of ${\bf p}'- {\bf p}$.
    Equivalently, since ${\bf q}_y < {\bf p}'_y < {\bf p}_y$, the slope of ${\bf q} - {\bf p}'$ is less than (or equal to) the slope of ${\bf p}' - {\bf p}$.
    The same argument that proved Eq.~\eqref{eq:nextpt_x}, applied to ${\bf p}'$ instead of ${\bf q}$, implies that 
    \[
        {\bf p}'_x - {\bf p}_x \le \det \Lambda + \max\{0, n/ {\bf p}'_y-{\bf p}_x\} \le  n + \det \Lambda,
    \]
    and from this we get
    \[
        {\bf q}_x - {\bf p}'_x  = ({\bf q}_x - {\bf p}_x) - ({\bf p}'_x - {\bf p}_x) > (2n+2\det\Lambda) - (n + \det \Lambda) = n + \det \Lambda.
    \]
    On the other hand, assuming ${\bf p}' \ne {\bf p}$ we have
    \begin{align*}
    {\bf p}_y - {\bf p}'_y \ge 1,
    \qquad \text{ and } \qquad
    {\bf p}'_y - {\bf q}_y =  1.
    \end{align*}
    These four equations imply
    \[
    \slope{{\bf q}- {\bf p}'} > \frac1{n+\det\Lambda} \ge \slope{{\bf p}'- {\bf p}},
    \]
    a contradiction.
\end{proof}

We can now state our main algorithmic result, computing $\Nextpt({\bf p})$ in logarithmic time.
Its (long) proof is deferred to Section~\ref{subsec:rsmin}.

\begin{theorem}
    \label{thm:exists_find_nextpt}
    \label{thm:exists_find_next_vertex}
    Let $\Lambda \subset \QQ^2$ be a rational lattice.
    Given an $n  > 0$, a point ${\bf p} \in  H_n$, and the standard basis  ${\bf b}_1, {\bf b}_2$ of $\Lambda$, Algorithm~\ref{alg:compute_nextpt} (described in the next section) computes $\textmono{nextpt}({\bf b}_1, {\bf b}_2, n, {\bf p})\coloneqq\Nextpt({\bf p})$.

    If $\Lambda$ is a reduced sublattice of $\ZZ^2$, the number of operations taken by the algorithm is in $O(\log (n+\det \Lambda))$  except (perhaps) if ${\bf p}_y \in (1,2)$.
\end{theorem}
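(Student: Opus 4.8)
The plan is to describe Algorithm~\ref{alg:compute_nextpt} and prove both its correctness and its complexity bound, reducing as much as possible to the geometry of $\Minsl({\bf p})$ and to the two lemmas already established (Lemma~\ref{lemma:exception} and the remark preceding Corollary~\ref{cor:nextpt_is_next_vertex}). First I would dispatch the easy and degenerate cases: if ${\bf p}_y\le 1$ then $\Nextpt({\bf p})=\infty$ and $\Minsl({\bf p})=(\det\Lambda,0)$, recoverable from the standard basis in $O(1)$; if ${\bf p}-{\bf b}_2\in H_n$ (i.e.\ ${\bf p}_y\ge n/{\bf p}_x+{{\bf b}_2}_y$) then by the remark $\Minsl({\bf p})=-{\bf b}_2$ and $\Nextpt({\bf p})={\bf p}+\ceil{(n/{\bf p}_x-{\bf p}_y)/{{\bf b}_2}_y}{\bf b}_2$, again $O(1)$. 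So the substance is the case $n/{\bf p}_x\le {\bf p}_y<n/{\bf p}_x+{{\bf b}_2}_y$ and ${\bf p}_y>1$, where $\Minsl({\bf p})$ is a primitive vector of $\Lambda$ with negative slope and strictly positive $x$-coordinate, and $\Nextpt({\bf p})$ is the next vertex of $\conv{H_{n,{\bf p}+\Lambda}}$.

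The core of the argument is the invariant maintained by the algorithm: it keeps a basis (or a pair of generators) $\{{\bf u},{\bf v}\}$ of $\Lambda$ such that the sought vector $\Minsl({\bf p})$ lies in the cone $\cone{{\bf u},{\bf v}}$, with ${\bf u}$ having the smaller (more negative) slope and ${\bf v}$ the larger one; initially one takes ${\bf u}=-{\bf b}_2$ and ${\bf v}$ a suitable multiple-free combination giving a positive $x$-coordinate (e.g.\ ${\bf b}_1$ or ${\bf b}_1-{\bf b}_2$ adjusted so that the cone straddles the ray of minimum slope). At each step the algorithm replaces one generator by an integer combination ${\bf u}+j{\bf v}$ (or ${\bf v}+j{\bf u}$) with $j$ chosen maximal subject to staying on the correct side of the separating test — ``is ${\bf p}+({\bf u}+j{\bf v})$ still above the hyperbola / still has the property that the minimum slope is not yet isolated?'' — exactly mirroring one quotient step of the Euclidean algorithm. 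The termination-and-correctness claim is that this narrows the cone to a single ray, at which point that ray's primitive generator is $\Minsl({\bf p})$; then $\Nextpt({\bf p})$ is read off as ${\bf p}+k\cdot\Minsl({\bf p})$ with $k=\max\{j:{\bf p}+j\,\Minsl({\bf p})\in H_n\}$, computed by solving the quadratic $({\bf p}_x+jm_x)({\bf p}_y+jm_y)\ge n$ via one integer square root. The detailed case analysis for \emph{which} generator to update and how to compute the maximal $j$ at each step (this depends on whether the current cone still contains points of $H_n$ on both sides) is deferred to Section~\ref{subsec:rsmin}; here I would only state the invariant, assert it is preserved, and note that because each step strictly decreases a nonnegative integer (the total ``Euclidean complexity'' of the pair of slopes relative to $\Minsl({\bf p})$), the algorithm terminates.

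For the complexity bound, assume $\Lambda$ is a reduced sublattice of $\ZZ^2$ and ${\bf p}_y\notin(1,2)$. By Lemma~\ref{lemma:exception}(2) the $x$-coordinate of $\Minsl({\bf p})$ is at most $2n+2\det\Lambda$. Since $\Lambda$ is reduced, the standard basis has ${{\bf b}_1}_x=1$ and ${{\bf b}_2}_y=\det\Lambda$, so every lattice vector's coordinates are controlled by its $x$-coordinate up to an additive $\det\Lambda$; hence $\Minsl({\bf p})$ has both coordinates of size $O(n+\det\Lambda)$. The number of Euclidean-style steps to reach a vector of this size, starting from generators whose coordinates are $O(n+\det\Lambda)$ (one verifies that ${\bf v}$ can be chosen this small using Proposition~\ref{prop:first_vertex}-type bounds), is $O(\log(n+\det\Lambda))$ by the standard bound on the number of quotient steps in the Euclidean algorithm on inputs of that magnitude, and each step is $O(1)$ arithmetic operations in the RAM model (one integer division for the quotient $j$, and one integer square root for the $k$-computation at the end). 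The case ${\bf p}_y\in(1,2)$ is explicitly excluded in the statement — and is instead handled by Lemma~\ref{lemma:exception}(1), which is where the computation of the standard basis of the reflected lattice is needed, costing $O(\log\det\Lambda)$ outside the scope of this theorem.

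The step I expect to be the main obstacle is establishing that the cone-narrowing invariant is genuinely preserved and that the loop's stopping condition coincides with ``the cone has collapsed to the ray of $\Minsl({\bf p})$'' — in other words, that the geometric minimum-slope problem really does unfold as a (two-sided) Euclidean algorithm without the vertex $\Nextpt({\bf p})$ ``escaping'' the current cone when the quotient $j$ is truncated by the hyperbola constraint rather than by the lattice. Making this precise requires a careful monotonicity argument about how $H_n\cap({\bf p}+\cone{{\bf u},{\bf v}})$ shrinks as the cone is refined, and is exactly what Section~\ref{subsec:rsmin} is devoted to; the bound on the number of steps, by contrast, is then a routine consequence of the coordinate bound from Lemma~\ref{lemma:exception}(2) together with the classical logarithmic bound on Euclidean quotient counts.
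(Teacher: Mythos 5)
Your proposal follows the same overall strategy as the paper: maintain a cone/basis invariant (the paper calls it a \emph{right search basis}, Definition~\ref{def:right_search_basis}), update it by $\raycast$ steps that mirror Euclidean quotient steps (Lemmas~\ref{lemma:o} and~\ref{lemma:i}), read off $\Nextpt({\bf p})$ from $\Minsl({\bf p})$ by one final ray cast, and feed the $x$-coordinate bound from Lemma~\ref{lemma:exception}(2) into a logarithmic bound on the number of iterations. The degenerate cases are also dispatched in the same way. So on correctness you are matching the paper modulo deferring the invariant-preservation details to Section~\ref{subsec:rsmin}, which the theorem statement itself allows.

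The genuine gap is in the complexity argument, where you say the bound follows from ``the standard bound on the number of quotient steps in the Euclidean algorithm on inputs of that magnitude,'' with the inputs allegedly being ``generators whose coordinates are $O(n+\det\Lambda)$.'' That is not the right object. The starting generators are ${\bf b}_1, -{\bf b}_2$, whose coordinates are at most $\det\Lambda$, and the Euclidean process does not run on vector magnitudes; it runs on the \emph{coefficients} $a,b\in\ZZ_{>0}$ in the expansion $\Minsl({\bf p}) = a\,\textmono{in}+b\,\textmono{out}$. What makes the bound rigorous in the paper is precisely part~(4) of Lemmas~\ref{lemma:o} and~\ref{lemma:i}, which shows these coefficients evolve \emph{step-for-step} as a slightly nonstandard $\gcd(a,b)$ computation (Algorithm~\ref{alg:gcd}), hence the loop runs for $O(\log\min\{a,b\})$ iterations. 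One then still has to bound the initial $a$: the paper does this by observing that, for a reduced $\Lambda$, the standard basis has ${{\bf b}_1}_x=1$ and ${{\bf b}_2}_x=0$, so $a$ \emph{equals} the $x$-coordinate of $\Minsl({\bf p})$, which Lemma~\ref{lemma:exception}(2) bounds by $2n+2\det\Lambda$. You state that the coordinates of $\Minsl({\bf p})$ are $O(n+\det\Lambda)$, but you never identify the Euclidean inputs with those coordinates, and without the ${{\bf b}_1}_x=1$ observation that identification is false in general. To close the gap you should: (i) invoke parts~(4) of the two update lemmas to show the loop is literally running Algorithm~\ref{alg:gcd} on the coefficients of $\Minsl({\bf p})$ in the initial basis; and (ii) use reducedness to conclude $a=\Minsl({\bf p})_x \le 2n+2\det\Lambda$, which gives the stated $O(\log(n+\det\Lambda))$.
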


We can now prove Theorem~\ref{thm:general_algorithm}.

\begin{proof}[Proof of Theorem~\ref{thm:general_algorithm}]
We first apply Lemma~\ref{lemma:transform_sublattice} to find an affine transformation sending our original hyperbola $h$ to the standard hyperbola $\{xy = \abs{n}\}$, the starting vertex to a point ${\bf p}\coloneq ({\bf p}_y,{\bf p}_x) \in \RR^2$ and the integer lattice to a reduced lattice $\Lambda\subset \QQ^2$, of determinant $\Delta$. This step takes $O(\log (\max\{\abs{a}, \abs{b}, \abs{c}\}))$ operations and provides a basis of $\Lambda$ with coordinates bounded by $\max\{\abs{a}, \abs{b}, \abs{c} \}$, so the standard basis of $\Lambda$ can also be computed in the same number of operations.

We then apply Theorem~\ref{thm:exists_find_nextpt}, which allows us to compute the next vertex in time $O(\log(\abs{n}+\Delta))$. This is in $O(\log (\max\{\abs{n}, \abs{a}, \abs{b}, \abs{c}\}))$ since
\[
\abs{n}+\Delta  \le \abs{n} + 3 \max\{\abs{a}, \abs{b}, \abs{c}\} \le 4 \max\{\abs{n}, \abs{a}, \abs{b}, \abs{c}\}.
\]

We have to deal with the exceptional case of Theorem~\ref{thm:exists_find_nextpt}, namely the case ${\bf p}_y \in (1,2)$. If this happens we apply Lemma~\ref{lemma:exception} instead, to find the next (and last) vertex in the slightly better number $O(\log (\Delta)) \le O(\log(\max\{\abs{a}, \abs{b}, \abs{c}\}))$ of operations .
\end{proof}

\subsubsection{Computing all vertices}

Let us now show  how to compute the convex hull
of $H_{n,{\bf p}+\Lambda}$ for any ${\bf p} \in \RR^2$ and reduced sublattice $\Lambda \subseteq \ZZ^2$, using the function $\textmono{nextpt}$.

\begin{corollary}
    \label{corollary:exists_hyperbola_convex_hull_alg}
    Given a standard basis $({\bf b}_1, {\bf b}_2)$ of a reduced lattice $\Lambda \subseteq \ZZ^2$, an $n >0$ and a ${\bf p} \in \RR^2$,
    Algorithm~\ref{alg:hyperbola_convex_hull} computes $\conv{H_{n,{\bf p}+\Lambda}}$ with time
    complexity in
    \[
        O\big( N\log(n+\det \Lambda)\big),
    \]
    where $N=\cardinal{\vertices{\conv{H_{n,{\bf p}+\Lambda}}}}$.
\end{corollary}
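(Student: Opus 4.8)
The plan is to assemble Corollary~\ref{corollary:exists_hyperbola_convex_hull_alg} out of the pieces already developed, treating it as a straightforward bookkeeping exercise. First I would describe Algorithm~\ref{alg:hyperbola_convex_hull} at the level needed for the proof: it computes the first vertex via Proposition~\ref{prop:first_vertex}, and then repeatedly calls \textmono{nextpt} to walk along the lower-right boundary of $\conv{H_{n,{\bf p}+\Lambda}}$ until \textmono{nextpt} returns $\infty$ (i.e.\ the last vertex, the bottommost one, has been reached). Correctness is immediate from Corollary~\ref{cor:nextpt_is_next_vertex}, which guarantees that if ${\bf p}$ is a vertex then $\Nextpt({\bf p})$ is the next one, so the iteration enumerates exactly $\vertices{\conv{H_{n,{\bf p}+\Lambda}}}$ in order of increasing $x$-coordinate; the first call is justified by Proposition~\ref{prop:first_vertex}.

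For the complexity, the loop runs $N$ times, so it suffices to bound the cost of each call. The computation of the first vertex is $O(1)$ by Proposition~\ref{prop:first_vertex} (we are given the standard basis), which is absorbed. Each call to \textmono{nextpt} costs $O(\log(n+\det\Lambda))$ by Theorem~\ref{thm:exists_find_nextpt}, \emph{except} possibly when the current vertex ${\bf p}$ has ${\bf p}_y \in (1,2)$. The key observation is that this exceptional situation can occur at most once during the whole run: a vertex with $y$-coordinate in $(1,2)$ can only be the last vertex (any subsequent vertex would need $y$-coordinate $\le 1$, which is impossible for a point of $H_n$ in the open positive quadrant, forcing $\Nextpt$ to be $\infty$). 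Hence at most one call falls into the exceptional case, and by Lemma~\ref{lemma:exception}(1) that single call still costs only $O(\log\det\Lambda) \le O(\log(n+\det\Lambda))$. Multiplying the per-call bound by the number $N$ of iterations gives the claimed $O(N\log(n+\det\Lambda))$.

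The main (minor) obstacle is making the ``exceptional case happens at most once'' argument airtight and phrasing the loop termination correctly: one must check that \textmono{nextpt} applied at a vertex with ${\bf p}_y\in(1,2)$ indeed returns the sentinel $\infty$ — or more precisely, that the algorithm detects it is at the last vertex — so the handling of that one case via Lemma~\ref{lemma:exception} is well-defined and the loop then stops. Beyond that, everything is routine: there are no hidden costs because the standard basis is supplied as input (so no per-iteration $\gcd$ computation is needed), the arithmetic within each \textmono{nextpt} call is already accounted for in Theorem~\ref{thm:exists_find_nextpt}, and summing $N$ identical bounds is trivial. I would therefore keep the proof to a short paragraph, citing Corollary~\ref{cor:nextpt_is_next_vertex} for correctness and Theorem~\ref{thm:exists_find_nextpt} together with Lemma~\ref{lemma:exception}(1) for the complexity.
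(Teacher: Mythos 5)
Your overall structure matches the paper's proof: first vertex in $O(1)$ by Proposition~\ref{prop:first_vertex}, then $N$ calls to $\textmono{nextpt}$ at $O(\log(n+\det\Lambda))$ each, with a special handling of the case ${\bf q}_y\in(1,2)$. However, your justification of the ``exceptional case happens at most once'' step contains a genuine error, and the parenthetical argument you give would not survive scrutiny.

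You claim that a vertex with ${\bf q}_y\in(1,2)$ must be the last vertex because ``any subsequent vertex would need $y$-coordinate $\le 1$, which is impossible for a point of $H_n$ in the open positive quadrant, forcing $\Nextpt$ to be $\infty$.'' This is false on all three counts. Points of $H_n$ with $y\in(0,1)$ certainly exist (take any $x>n/y$). Correspondingly, $\Nextpt({\bf q})=\infty$ holds precisely when ${\bf q}_y\le 1$, not when ${\bf q}_y\in(1,2)$; in fact, for a reduced lattice the horizontal lattice lines are unit-spaced, so a vertex with ${\bf q}_y\in(1,2)$ is the \emph{second-to-last} vertex, and there is exactly one more vertex below it on the line $y={\bf q}_y-1\in(0,1)$ — this is precisely what the proof of Lemma~\ref{lemma:exception}(1) computes. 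Also, your later remark that one ``must check that $\textmono{nextpt}$ applied at a vertex with ${\bf p}_y\in(1,2)$ indeed returns the sentinel $\infty$'' compounds the same misreading. The correct reasoning, which is what the paper does, is structural rather than analytic: the main loop's guard is ${\bf q}_y\ge 2$, so the algorithm \emph{by design} never invokes $\textmono{nextpt}$ at a vertex with ${\bf q}_y\in(1,2)$; it exits the loop and in the separate \textbf{if} branch directly computes the one remaining (last) vertex via Lemma~\ref{lemma:exception}(1), a computation costing $O(\log\det\Lambda)$. That is why the exceptional case contributes at most once, and Theorem~\ref{thm:exists_find_nextpt}'s $O(\log(n+\det\Lambda))$ bound applies to every call made inside the loop because those calls all have ${\bf q}_y\ge 2$.
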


\begin{algorithm}
    \caption{Compute $\conv{H_{n,{\bf p}+\Lambda}}$.}
    \label{alg:hyperbola_convex_hull}

    \DontPrintSemicolon
    \SetKwInOut{Input}{Input}
    \SetKwInOut{Output}{Output}

    \Input{$n >0, {\bf p} \in \RR^2, \Lambda = \left\langle{\bf w}_1, {\bf w}_2\right\rangle \subseteq \QQ^2$, with~${\bf w}_1,~{\bf w}_2$~linearly~independent}
    \Output{$\textmono{hull} = \vertices{\conv{H_{n,{\bf p}+\Lambda}}}$}

    \SetKw{And}{and}

    \SetKwProg{Fn}{Function}{}{end}
    \Fn{\textmono{convex\_hull\_H\_n(}${\bf w}_1, {\bf w}_2\typeHint{\in \NonZero\Lambda}, n\typeHint{ >0}, {\bf p}\typeHint{\in \RR^2}$\textmono{)}} {
        ${\bf b}_1, {\bf b}_2\typeHint{\in \NonZero\Lambda} \gets \textmono{standard\_basis(}{\bf w}_1, {\bf w}_2\textmono{)}$\;
        ${\bf q}\typeHint{\in {\bf p}+\Lambda} \gets {\bf p} - \ceil{\frac{{\bf p}_x - 1}{{{\bf b}_1}_x}} {\bf b}_1$\;
%        \uIf {${\bf q} \in H_n$} {${\bf q} \gets {\bf q} - \raycast(H_n,\,{\bf q},\,-{\bf b}_2)\cdot{\bf b}_2$}
%        \lElse
        ${\bf q} \gets {\bf q} + \ceil{\frac{n/{\bf q}_x - {\bf q}_y}{{{\bf b}_2}_y}} {\bf b}_2$\;
%       {\bf q} + (\raycast(H_n,\,{\bf q},\,{\bf b}_2) + 1)\cdot{\bf b}_2$}
        $\textmono{hull} \typeHint{\subseteq {\bf p}+\Lambda} \gets \{{\bf q}\}$\;

        \While {${\bf q}_y \ge 2$ \And $\textmono{nextpt(}{\bf b}_1,\,{\bf b}_2,\,n,\,{\bf q}\textmono{)} \ne \infty$} {
            ${\bf q} \gets \textmono{nextpt(}{\bf b}_1,\,{\bf b}_2,\,n,\,{\bf q}\textmono{)}$\;\label{line:nextpt}
            $\textmono{hull} \gets \textmono{hull} \cup \left\{{\bf q}\right\}$\;
            \label{line:union}
        }
        \If {${\bf q}_y \in (1,2)$} {
            ${\bf q} \gets ({\bf q}_y, {\bf q}_x)$\;
            ${\bf b}_1', {\bf b}_2'\typeHint{\in \NonZero\Lambda} \gets \textmono{standard\_basis(}({{\bf w}_1}_y, {{\bf w}_1}_x),({{\bf w}_2}_y, {{\bf w}_2}_x)\textmono{)}$\;
            ${\bf q}'\typeHint{\in {\bf p}+\Lambda} \gets {\bf q} - {{\bf b}_1'}$\;
            ${\bf q}' \gets {\bf q}' + \ceil{\frac{n/{\bf q}'_x - {\bf q}'_y}{{{\bf b}_2'}_y}} {{\bf b}_2'}$\;
%            ${\bf q}' \gets {\bf q}' + \ceil{\frac{\max({\bf q}_y - {\bf q}'_y, 0)}{{{\bf b}_2'}_y}} {{\bf b}_2'}$\;\label{line:q'}
            ${\bf q} \gets ({\bf q}'_y, {\bf q}'_x)$\;
            $\textmono{hull} \gets \textmono{hull} \cup \left\{{\bf q}\right\}$\;
            \label{line:union_2}
        }
        \Return $\textmono{hull}$\;
    }
\end{algorithm}

\begin{proof}
    Algorithm~\ref{alg:hyperbola_convex_hull} first computes the leftmost vertex of $\conv{H_{n,{\bf p}+\Lambda}}$
    as described in Proposition~\ref{prop:first_vertex}, and inserts it in $\textmono{hull}$. This part has complexity $O(1)$,
    since we are given a standard basis.
    Then it iterates $\textmono{nextpt}({\bf b}_1,{\bf b}_2, n, {\bf q})$ which, by Corollary~\ref{cor:nextpt_is_next_vertex} and Theorem~\ref{thm:exists_find_nextpt}, returns the subsequent vertices each in time $O(\log(n+\det \Lambda))$.
    This goes on until the rightmost vertex, for which $\Nextpt$ equals $\infty$,  is reached, with one exception: if we arrive at a vertex ${\bf q}$ with ${\bf q}_y\in (1,2)$ then we apply Lemma~\ref{lemma:exception}: we directly compute the last vertex of $\conv{H_{n,{\bf p} + \Lambda}}$ exactly as the first vertex was computed, but swapping the coordinates $x$ and $y$.

    Although $\textmono{hull}$ in the algorithm is formally a set the elements
    added to it are always new, so the union is really an insertion, taking constant time.
\end{proof}

\subsubsection{Starting at an intermediate point}

If we are given a value $x_{start}\in \RR_{>0}$, we can easily find the first vertex ${\bf q}$ of $\conv{H_{n,{\bf p} + \Lambda} \cap \{x\ge x_{start}\}}$ in $O(1)$ time, by a slight modification of the proof of Proposition~\ref{prop:first_vertex}.
However, such ${\bf q}$ may not be a vertex of $\conv{H_{n,{\bf p} + \Lambda}}$.
We now show that the first vertex of $\conv{H_{n,{\bf p} + \Lambda}}$ lying in $\{x\ge x_{start}\}$ can be found with a logarithmic number of iterations of  the same function $\textmono{nextpt}$ used in Algorithm~\ref{alg:hyperbola_convex_hull}.

\begin{proposition}
    \label{prop:find_next_vertex_from_non_vertex}
    Let ${\bf p} \in H_n$ be a point in the interior of $\conv{H_{n,{\bf p}+\Lambda}}$.
    Let $e_{\bf p}$ be the edge of $\conv{H_{n,{\bf p}+\Lambda}}$ vertically below
    ${\bf p}$ and let ${\bf v}$ be the right end-point of $e_{\bf p}$ (we admit ${\bf v}=\infty$).

    If ${\bf q} \coloneqq \Nextpt({\bf p}) \ne \infty$ then
    $\operatorname{dist}({\bf q}, e_{\bf p}) < \tfrac12 \operatorname{dist}({\bf p}, e_{\bf p})$
    and ${\bf q}_x \le {\bf v}_x$.
\end{proposition}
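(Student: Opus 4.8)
The plan is to compare everything against the line $\ell$ that contains $e_{\bf p}$, which is a supporting line of $\conv{H_{n,{\bf p}+\Lambda}}$; write ${\bf u},{\bf v}$ for the left and right endpoints of $e_{\bf p}$. Two facts come for free: (i) since $\ell$ supports the hull, every lattice point of $H_{n,{\bf p}+\Lambda}$ lies weakly above $\ell$; and (ii) since $e_{\bf p}\subseteq\conv{H_{n,{\bf p}+\Lambda}}\subseteq H_n$, the line $\ell$ stays weakly above the hyperbola throughout the $x$-range $[{\bf u}_x,{\bf v}_x]$ of $e_{\bf p}$, i.e.\ $\ell(x)\ge n/x$ there. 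Before the main argument I would clear the degenerate configurations. If ${\bf v}=\infty$ (so $e_{\bf p}$ is the horizontal bottom edge of the hull), I replace ${\bf v}$ by any lattice point of $e_{\bf p}$ with first coordinate larger than ${\bf p}_x$; such a point exists and lies in $H_{n,{\bf p}+\Lambda}$ (translate the last vertex by a positive multiple of the primitive horizontal vector of $\Lambda$). If the vertical ray below ${\bf p}$ meets a vertex ${\bf w}$ of the hull, I take $e_{\bf p}$ to be the edge to the right of ${\bf w}$, and observe that when $\Minsl({\bf p})$ happens to be vertical one gets ${\bf q}=\Nextpt({\bf p})={\bf w}$, so both assertions are immediate. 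Thus I may assume ${\bf v}\in H_{n,{\bf p}+\Lambda}$, ${\bf v}_x>{\bf p}_x$, and that $\Minsl({\bf p})$ has positive first coordinate.

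The engine is a slope comparison. The vector ${\bf v}-{\bf p}$ is nonzero, has positive first coordinate, and ${\bf v}\in H_{n,{\bf p}+\Lambda}$, so it is one of the competitors in the definition of $\Minsl({\bf p})$, giving $\slope{\Minsl({\bf p})}\le\slope{{\bf v}-{\bf p}}$; and since ${\bf p}$ is strictly above $\ell$ while ${\bf v}\in\ell$ with ${\bf v}_x>{\bf p}_x$, we get $\slope{{\bf v}-{\bf p}}<\slope{\ell}$. Hence $\slope{\Minsl({\bf p})}<\slope{\ell}$ strictly. Parametrise the ray as ${\bf p}+t\,\Minsl({\bf p})$ and let $g(t)$ be the vertical height of this point above $\ell$; it is a strictly decreasing affine function $g(t)=d_{\bf p}-\delta t$ with $\delta>0$ and $d_{\bf p}\coloneqq\operatorname{dist}({\bf p},e_{\bf p})$ (measured vertically). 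Evaluating at the value $t_{\bf v}$ for which the ray has first coordinate ${\bf v}_x$, the slope inequality says exactly that the ray is then at or below $\ell$: $g(t_{\bf v})\le 0$, equivalently $\delta t_{\bf v}\ge d_{\bf p}$.

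Both conclusions now follow. Write ${\bf q}=\Nextpt({\bf p})={\bf p}+k\,\Minsl({\bf p})$ with $k\ge 1$. By fact (i), ${\bf q}$ is weakly above $\ell$, so $g(k)\ge 0\ge g(t_{\bf v})$; monotonicity of $g$ forces $k\le t_{\bf v}$, hence ${\bf q}_x\le{\bf v}_x$. For the distance bound it suffices to prove $g(k)<\delta$: then $d_{\bf p}=g(k)+k\delta\ge g(k)+\delta>2g(k)$, so $\operatorname{dist}({\bf q},e_{\bf p})<\tfrac12\operatorname{dist}({\bf p},e_{\bf p})$ (distances to $e_{\bf p}$ and to $\ell$ coincide because ${\bf q}_x,{\bf p}_x\in[{\bf u}_x,{\bf v}_x]$). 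To show $g(k)<\delta$ I split on the position of $k+1$ relative to $t_{\bf v}$. If $t_{\bf v}<k+1$, then $g(k)=d_{\bf p}-\delta k\le\delta t_{\bf v}-\delta k=\delta(t_{\bf v}-k)<\delta$. If $t_{\bf v}\ge k+1$, then the lattice point ${\bf p}+(k+1)\Minsl({\bf p})$ has first coordinate in $[{\bf p}_x,{\bf v}_x]\subseteq[{\bf u}_x,{\bf v}_x]$, yet it is not in $H_n$ by maximality of $k$, so it lies strictly below the hyperbola and therefore, by fact (ii), strictly below $\ell$; hence $g(k+1)<0$ and $g(k)=g(k+1)+\delta<\delta$.

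The part that needs the most care is the bookkeeping of the degenerate configurations (${\bf v}=\infty$, ${\bf p}$ lying over a vertex, $\Minsl({\bf p})$ vertical) and pinning down the intended meaning of $\operatorname{dist}(\cdot,e_{\bf p})$: I read it as vertical distance, which is consistent with ``$e_{\bf p}$ vertically below ${\bf p}$'', and for Euclidean distance the statement is identical up to the single common factor relating the two, with the inclusions ${\bf q}_x,{\bf p}_x\in[{\bf u}_x,{\bf v}_x]$ keeping the feet of the perpendiculars essentially on $e_{\bf p}$. The genuinely geometric content — the strict inequality $\slope{\Minsl({\bf p})}<\slope{\ell}$ and the ``one further step drops below $\ell$'' dichotomy — is short.
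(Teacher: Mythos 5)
Your proof is correct and follows essentially the same route as the paper: both rest on the slope comparison $\slope{\Minsl({\bf p})} \le \slope{{\bf v}-{\bf p}}$ to pin ${\bf q}_x \le {\bf v}_x$, and on the observation that advancing along $\Minsl({\bf p})$ beyond ${\bf q}$ must drop below the supporting line of $e_{\bf p}$ (you establish this directly via the ``one extra lattice step'' dichotomy giving $g(k)<\delta$, while the paper argues by contradiction that ${\bf q}'=2{\bf q}-{\bf p}$ would lie in $H_n$ --- the same geometric content). Your explicit handling of the degenerate configurations and of the $[{\bf u}_x,{\bf v}_x]$ range bookkeeping, which the paper leaves implicit, is a modest improvement in rigor.
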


\begin{proof}
    By definition of ${\bf q} \coloneqq \Nextpt({\bf p})$, the slope of ${\bf q} - {\bf p}$ is smaller or equal than the slope of $e_{\bf p}$, that is, the slope of ${\bf v} - {\bf p}$. (This holds even if ${\bf v}=\infty$, in which case we consider the slope of ${\bf v} - {\bf p}$ to be zero).
    Hence, the ray ${\bf p} + \RR_{\ge 0}({\bf q} - {\bf p})$ intersects $e_{\bf p}$;  it must do so after passing through ${\bf q}$ because ${\bf q} \in \conv{H_{n,{\bf p}+\Lambda}}$.
    Since ${\bf v}$ is the right end-point of $e_{\bf p}$, we have ${\bf p}_x \le {\bf q}_x \le {\bf v}_x$.

    Now assume, to seek a contradiction, that $\operatorname{dist}({\bf q}, e_{\bf p}) \ge \tfrac12 \operatorname{dist}({\bf p}, e_{\bf p})$.
    This implies that
    ${\bf q}' \coloneqq {\bf p} + 2({\bf q} - {\bf p})$ is also on or above $e_{\bf p}$ and thus is in
    $H_{n,{\bf p}+\Lambda}$, further from ${\bf p}$ in the same direction as ${\bf q}$, in contradiction
    with the definition of ${\bf q}$.
    This situation is illustrated in Figure~\ref{fig:prop:nextpt-from-non-vertex}.
\end{proof}

\begin{figure}[htb]
    \includegraphics[width=.7\textwidth]{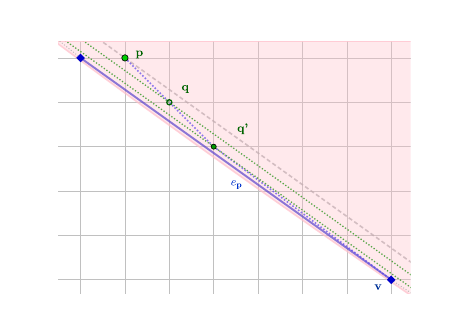}
    \caption{Case where $\nextpt_{\ZZ^2,H_n}$ is not the next vertex.}
    \label{fig:prop:nextpt-from-non-vertex}
\end{figure}

\begin{corollary}
    \label{cor:find_next_vertex_from_non_vertex_log}
Let $\Lambda\subset \ZZ^2$ be a reduced sublattice, let $n > 0$, and let ${\bf p} \in H_n$  a point with $\Nextpt({\bf p}) < \infty$.
    Let ${\bf v}$ be the leftmost vertex of
    \[
        \vertices{H_{n,{\bf p}+\Lambda}} \cap \{x > {\bf p}_x\},
    \]
    (possibly ${\bf v} = \infty$).
    Then, ${\bf v}$ is reached in at most $\log_2(n +\det \Lambda)$ iterative calls to $\textmono{nextpt}$ starting at ${\bf p}$.
\end{corollary}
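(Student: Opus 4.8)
The whole argument runs on Proposition~\ref{prop:find_next_vertex_from_non_vertex}: if ${\bf p}$ is interior to $\conv{H_{n,{\bf p}+\Lambda}}$ and ${\bf q}=\Nextpt({\bf p})\neq\infty$, then ${\bf q}$ is at strictly less than half the distance of ${\bf p}$ to the edge $e_{\bf p}$ below it, and ${\bf q}_x\le{\bf v}_x$, where ${\bf v}$ is the right endpoint of $e_{\bf p}$. The plan is to iterate $\textmono{nextpt}$ while showing that the distance to one \emph{fixed} supporting line halves at every step, yet cannot fall below a lattice-determined threshold until we land exactly on ${\bf v}$; the number of halvings is then controlled by the horizontal length of that edge.

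First I would reduce to a convenient starting point. If ${\bf p}$ lies on the boundary of $\conv{H_{n,{\bf p}+\Lambda}}$ — a vertex, an interior point of a bounded edge, or a point of the vertical unbounded ray — then Corollary~\ref{cor:nextpt_is_next_vertex} together with the description of $\Nextpt$ in the remark preceding it show that ${\bf v}$ is reached in at most two calls. If ${\bf p}$ is interior and ${\bf p}-{\bf b}_2\in H_n$, then $\Minsl({\bf p})=-{\bf b}_2$ and one call brings us to the bottommost lattice point ${\bf q}^*$ of the column $x={\bf p}_x$; either ${\bf q}^*$ is a vertex (one more call finishes) or it is again interior. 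Thus, up to $O(1)$ preliminary calls, I may assume ${\bf p}$ is interior with ${\bf p}-{\bf b}_2\notin H_n$, i.e.\ $n/{\bf p}_x\le{\bf p}_y<n/{\bf p}_x+{{\bf b}_2}_y$; since $e_{\bf p}$ is a chord of the convex region $H_n$, it lies above the hyperbola, so the vertical distance $d_0$ from ${\bf p}$ to the supporting line $\ell$ of $e_{\bf p}$ is less than ${{\bf b}_2}_y\le\det\Lambda$.

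Then, setting ${\bf p}_0:={\bf p}$ and ${\bf p}_{j+1}:=\Nextpt({\bf p}_j)$, and letting ${\bf w},{\bf v}$ be the left and right endpoints of $e_{\bf p}$ (lattice points of ${\bf p}+\Lambda$ on $\ell$), I would run an induction — using ${\bf p}_x\ge{\bf w}_x$, the bound ${\bf p}_{j+1,x}\le{\bf v}_x$ from the proposition, and the fact (read off from its proof) that the segment ${\bf p}_j{\bf p}_{j+1}$ stays weakly above $\ell$ — to show that every ${\bf p}_j$ lies in the vertical strip over $e_{\bf p}$ and weakly above $\ell$. Hence the edge below ${\bf p}_j$ is still $e_{\bf p}$ while ${\bf p}_j\neq{\bf v}$, so the proposition applies at each step, giving $d_j<2^{-j}d_0<2^{-j}\det\Lambda$ for the vertical distances $d_j$. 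On the other hand, writing $g=(g_x,g_y)\in\Lambda$ for the primitive lattice vector parallel to $\ell$ (with $g_x\ge1$, since a bounded edge of the lower chain has strictly negative slope), the lattice lines of ${\bf p}+\Lambda$ parallel to $\ell$ cut a vertical line in an arithmetic progression of step $\det\Lambda/g_x$; since ${\bf p}_{j,x}>{\bf w}_x$ we have ${\bf p}_j\neq{\bf w}$, so whenever ${\bf p}_j\neq{\bf v}$ the point ${\bf p}_j$ is a lattice point strictly above $\ell$ and $d_j\ge\det\Lambda/g_x$. Comparing the two estimates, ${\bf p}_j$ must equal ${\bf v}$ once $2^{-j}\det\Lambda\le\det\Lambda/g_x$, i.e.\ after $O(\log g_x)$ calls. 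Finally $g_x\le{\bf v}_x-{\bf w}_x$, and since ${\bf v}$ is the leftmost point of ${\bf p}+\Lambda$ in its horizontal row lying in $H_n$ one has ${\bf v}_x<n/{\bf v}_y+\det\Lambda\le n+\det\Lambda$ as soon as ${\bf v}_y\ge1$; handling the degenerate case ${\bf v}_y<1$ (which pushes us near the exceptional regime ${\bf p}_y\in(1,2)$, with ${\bf v}$ the last vertex) directly via Lemma~\ref{lemma:exception}, all of this assembles into at most $\log_2(n+\det\Lambda)$ calls.

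The main obstacle is exactly the invariance claim — that the edge $e_{\bf p}$ below the iterates never changes, so that Proposition~\ref{prop:find_next_vertex_from_non_vertex} can legitimately be applied to the \emph{same} line $\ell$ at every step — together with the accompanying constant-chasing: verifying that after the $O(1)$ preliminary calls the current point is within $\det\Lambda$ of $\ell$, that the horizontal extent $g_x$ of $e_{\bf p}$ does not exceed $n+\det\Lambda$, and that all the rounding in the final count stays under $\log_2(n+\det\Lambda)$. The various near-bottom degenerate configurations must also be peeled off for separate, elementary treatment.
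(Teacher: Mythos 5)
Your proposal follows the paper's argument essentially step for step: reduce to ${\bf p}-{\bf b}_2\notin H_n$ so the initial vertical distance to the edge is below $\det\Lambda$; apply Proposition~\ref{prop:find_next_vertex_from_non_vertex} to halve that distance at each call; observe the lattice lines parallel to the edge are spaced $\det\Lambda/x_e$ apart (your $g_x$ is the paper's $x_e$); and bound $x_e\le {\bf v}_x< n+\det\Lambda$. The one difference is that you spend more words on two points the paper treats implicitly: that the edge below the iterates is invariant (you derive this from ${\bf p}_x\le{\bf p}_{j,x}\le{\bf v}_x$ plus staying weakly above $\ell$, which is exactly the right justification), and the degenerate case ${\bf v}_y<1$ where ${\bf v}$ is the last vertex and the naive bound ${\bf v}_x<n+\det\Lambda$ may fail. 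On the latter your diagnosis is slightly off — ${\bf v}_y<1$ forces ${\bf v}$ to be the last vertex but does not force ${\bf p}_y\in(1,2)$, so invoking Lemma~\ref{lemma:exception} is not an exact fit; still, the case is peripheral (the last vertex can be computed directly by one $\gcd$, or one absorbs an additive $O(1)$ in the iteration count), and the paper itself does not address it. Overall this is the same proof with a bit of extra bookkeeping made explicit.
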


\begin{proof}
    Let $e$ be the edge vertically
    below ${\bf p}$, so that ${\bf v}$ is its rightmost endpoint.
    If ${\bf p} - {\bf b}_2 \in H_n$ then ${\bf q} \coloneqq \Nextpt({\bf p})$ is in the same vertical line as ${\bf p}$ and has
    ${\bf q} - {\bf b}_2 \notin H_n$. Hence, we can assume without loss of generality that ${\bf p} - {\bf b}_2 \notin H_n$. That is, the vertical distance between ${\bf p}$ and $e$ is bounded by ${{\bf b}_2}_y \le \det(\Lambda)$.

    Let $x_e$ denote the $x$-coordinate of a primitive vector of $\Lambda$ parallel to $e$. Since $\Lambda$ is primitive, we have that $x_e =\det \Lambda$ if ${\bf v}=\infty$ and $0 < x_e \le {\bf v}_x < n+\det \Lambda$  if ${\bf v}$ is finite. The  vertical Euclidean distance between consecutive lattice lines parallel to $e$ equals $\det \Lambda/x_e$. As the distance between our point and $e$ is initially bounded by $\det \Lambda$ and is at least halved in each iteration, it will become smaller than the minimum possible vertical distance in
    \[
        \log_2 \frac{\det \Lambda}{\det \Lambda/x_e} = \log_2 (x_e) \le  \log_2(n+\det \Lambda)
    \]
    iterations. When this happens we must have reached a point ${\bf p}^* \in e$ and, if ${\bf p}^*\ne {\bf v}$, the next call to $\textmono{nextpt}$ finds ${\bf v}$. 
\end{proof}
%RESCATE 3

Hence:

\begin{theorem}
\label{thm:next_vertex}
    Given a standard basis $({\bf b}_1, {\bf b}_2)$ of $\Lambda$ and an $x_{start} >0$, the vertex of $\vertices{\conv{H_n,{\bf p} + \Lambda}}\cap\{x\ge x_{start}\}$ with the smallest $x$-coordinate can be computed in $O(\log^2(n+\det \Lambda))$ arithmetic operations.
\end{theorem}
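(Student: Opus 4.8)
The plan is to combine the two algorithmic ingredients already assembled: the fast computation of the first vertex beyond a given abscissa (via a modification of Proposition~\ref{prop:first_vertex}) and the "fast convergence to the edge below" phenomenon of Corollary~\ref{cor:find_next_vertex_from_non_vertex_log}. Concretely, first I would use the slight modification of Proposition~\ref{prop:first_vertex} mentioned in the paragraph preceding Proposition~\ref{prop:find_next_vertex_from_non_vertex}: given $x_{start}$, compute in $O(1)$ operations the leftmost lattice point ${\bf q}_0$ of ${\bf p}+\Lambda$ on a vertical line with $x\ge x_{start}$ that lies in $H_n$ and is the bottommost such point on that line, i.e. the first vertex of $\conv{H_{n,{\bf p}+\Lambda}\cap\{x\ge x_{start}\}}$. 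This point is in $H_n$ and has $\Nextpt({\bf q}_0)$ well-defined; if ${\bf q}_0$ happens to already be a vertex of $\conv{H_{n,{\bf p}+\Lambda}}$ we are done, but in general it need not be.

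The key step is then to feed ${\bf q}_0$ into Corollary~\ref{cor:find_next_vertex_from_non_vertex_log}. If $\Nextpt({\bf q}_0)=\infty$ there is no vertex to the right and we are done; otherwise that corollary guarantees that the leftmost genuine vertex ${\bf v}$ of $\conv{H_{n,{\bf p}+\Lambda}}$ with $x>({\bf q}_0)_x$ is reached after at most $\log_2(n+\det\Lambda)$ iterated calls to $\textmono{nextpt}$, starting from ${\bf q}_0$. Each such call costs $O(\log(n+\det\Lambda))$ operations by Theorem~\ref{thm:exists_find_nextpt}, so the whole descent costs $O(\log^2(n+\det\Lambda))$. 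The only subtlety is the exceptional case ${\bf p}_y\in(1,2)$ of Theorem~\ref{thm:exists_find_nextpt}: but along the chain of points produced, the $y$-coordinates are strictly decreasing, so we can encounter this regime at most once, and when we do we invoke Lemma~\ref{lemma:exception}(1) to finish in $O(\log\det\Lambda)$ extra operations without affecting the bound. One also has to note that the vertex sought, $\vertices{\conv{H_n,{\bf p}+\Lambda}}\cap\{x\ge x_{start}\}$ with smallest $x$-coordinate, is precisely ${\bf v}$ when $({\bf q}_0)_x > x_{start}$ has no lattice vertex strictly between $x_{start}$ and $({\bf q}_0)_x$; since ${\bf q}_0$ is on the first lattice vertical line at or beyond $x_{start}$, any vertex with $x\ge x_{start}$ has $x\ge({\bf q}_0)_x$, so either ${\bf q}_0$ itself is that vertex or it is ${\bf v}=\Nextpt$-reachable from ${\bf q}_0$ as above.

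I expect the main obstacle to be purely bookkeeping rather than conceptual: making sure the "first vertex of $\conv{H_{n,{\bf p}+\Lambda}\cap\{x\ge x_{start}\}}$" is correctly identified and that it legitimately serves as the point ${\bf p}$ in the hypothesis of Corollary~\ref{cor:find_next_vertex_from_non_vertex_log} (in particular that it lies in $H_n$ with $\Nextpt$ finite, or else we stop). Once that is in place the complexity count is immediate: $O(1)$ for the first vertex plus $O(\log(n+\det\Lambda))$ iterations of $\textmono{nextpt}$, each costing $O(\log(n+\det\Lambda))$, for a total of $O(\log^2(n+\det\Lambda))$ arithmetic operations, which is the claimed bound. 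Proposition~\ref{prop:intro_initial_vertex} is then the specialization to $\Lambda=\ZZ^2$, $n\in\NN$.
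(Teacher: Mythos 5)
Your high-level plan matches the paper's: start from the bottommost lattice point on the first lattice vertical line at or past $x_{start}$ (an $O(1)$ computation via the standard basis), then iterate $\textmono{nextpt}$, invoking Corollary~\ref{cor:find_next_vertex_from_non_vertex_log} to bound the number of iterations by $\log_2(n+\det\Lambda)$, each costing $O(\log(n+\det\Lambda))$ by Theorem~\ref{thm:exists_find_nextpt}. However, there is a genuine gap you do not address: Corollary~\ref{cor:find_next_vertex_from_non_vertex_log} only guarantees that the sought vertex $\bf v$ \emph{appears somewhere} among the first $\log_2(n+\det\Lambda)$ iterates; it does not tell you \emph{which} iterate it is. The iterates pass through a prefix of non-vertices and then a suffix of genuine vertices, and a single call to $\textmono{nextpt}$ gives no local signal of whether its argument was a vertex. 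If you simply stop after $\log_2(n+\det\Lambda)$ steps, you hold some vertex, but typically one strictly to the right of $\bf v$; and you cannot recognize $\bf v$ by inspecting individual points.

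The paper explicitly flags this ("One problem is that we do not know, a priori, when to stop") and resolves it with an idea your proposal lacks: after the forward pass, it keeps the whole list of $\log_2(n+\det\Lambda)$ iterates and then runs $\textmono{nextpt}$ \emph{in reverse} (reflecting across $\{x=y\}$) starting from the final iterate, which is certainly a vertex. By Corollary~\ref{cor:nextpt_is_next_vertex} the reverse iterates are all vertices, and they agree with the forward list exactly on the suffix of genuine vertices; the last point of agreement is $\bf v$. This backtracking comparison is what lets the algorithm actually output $\bf v$ rather than merely some vertex to its right, and it is the step you need to add to make the argument complete. (Your observation that the exceptional case ${\bf p}_y\in(1,2)$ can occur at most once along the chain is correct and does not affect the asymptotics, but it is peripheral to the missing idea.)
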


\begin{proof}
From the standard basis we compute a point ${\bf q}$ in the first vertical lattice line with $x$-coordinate at least $x_{start}$ as
    \[
    {\bf q} \coloneqq {\bf p} - \floor{\frac{{\bf p}_x - x_{start}}{{{\bf b}_1}_x}} {\bf b}_1.
    \]
    Then, the bottommost point in $H_{n, {\bf p}+\Lambda}$ along that line then equals
    \[
        {\bf q}  + \ceil{\frac{n/{\bf q}_x- {\bf q}_y }{{{\bf b}_2}_y}} {\bf b}_2.
    \]

    Hence, for the rest of the proof we assume that ${\bf p}$ is in fact the bottommost lattice point in the first vertical line with $x$-coordinate at least $x_{start}$. From there, Corollary~\ref{cor:find_next_vertex_from_non_vertex_log}
    says that in at most $\log_2 (n+\det \Lambda)$ calls to $\textmono{nextpt}$ we can find the next vertex after $x_{start}$.

    One problem is that we do not know, a priori, when to stop. But by Corollary~\ref{cor:nextpt_is_next_vertex} we know that all the iterations after we get the next vertex will give the subsequent vertices, in order. To really compute the first vertex we proceed as follows: we iterate  $\textmono{nextpt}$ $\log_2 (n+\det \Lambda)$ times and keep the list of points so obtained. The last one (plus perhaps some before it) are vertices. We then can backtrack, applying $\textmono{nextpt}$ in reverse (that is, reflecting everything along the diagonal $\{x=y\}$). Since we now start at a vertex, Corollary~\ref{cor:nextpt_is_next_vertex} again tells us that now all the points obtained will be vertices. The last one that coincides with what we computed when going forward is the next vertex after $x_{start}$ that we are looking for.
\end{proof}
%RESCATE 4

\subsection{Proof of Theorem~\ref{thm:exists_find_nextpt}}
\label{subsec:rsmin}

\subsubsection{Casting rays at the hyperbola}
\label{subsubsec:cast}

One primitive that our algorithm repeatedly uses is to
find the farthest lattice point along a ray before crossing $h_n$. Here we show that this takes constant time.

\begin{lemma}
    \label{lemma:quadratic_formula}
    Let $a, b, c \in \RR$.
    There exists an algorithm which performs only $O(1)$ basic integer arithmetic (including
    at most one integer square root) to decide whether the quadratic equation
    $ax^2 + bx + c$ has zero, one, or two real roots, and to compute their integer floors.
\end{lemma}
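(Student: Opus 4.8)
The plan is to reduce everything to computing $\floor{\sqrt{\cdot}}$ of a nonnegative integer and then carefully adjust by $\pm 1$ to absorb the rounding in the rest of the quadratic formula. First I would dispose of the degenerate case: if $a = 0$ the equation is linear (or constant), and the root, if any, is $-c/b$, whose floor is $\floor{-c/b}$, computable with one integer division; so assume $a \ne 0$, and by replacing $(a,b,c)$ with $(-a,-b,-c)$ if necessary assume $a > 0$. Compute the discriminant $D \coloneqq b^2 - 4ac$ with $O(1)$ arithmetic operations. If $D < 0$ there are no real roots; if $D = 0$ there is a double root $-b/(2a)$, whose floor is $\floor{-b/(2a)}$ by one integer division. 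The interesting case is $D > 0$, where the two roots are $x_\pm = \frac{-b \pm \sqrt{D}}{2a}$.

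The key step is to compute $\floor{x_+}$ and $\floor{x_-}$ exactly even though $\sqrt{D}$ is irrational. Let $s \coloneqq \floor{\sqrt{D}}$, obtained with the one permitted integer square root; then $s \le \sqrt{D} < s+1$ and, since $D$ is an integer that is not a perfect square in this subcase (or even if it is), we have $s^2 \le D$. I would first produce a \emph{candidate} value $q \coloneqq \floor{\frac{-b + s}{2a}}$ using one integer division (with $a > 0$ the denominator is positive). This $q$ satisfies $q \le x_+$, but because we rounded $\sqrt D$ down it might be that $x_+$ actually lies in the next integer interval; concretely $\floor{x_+} \in \{q, q+1\}$. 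The test to decide which is: $\floor{x_+} \ge q+1$ iff $2a(q+1) - (-b) \le \sqrt{D}$, and since both sides, after checking the sign of the left side, are comparable by squaring (an exact integer comparison $(2a(q+1)+b)^2 \le D$ when $2a(q+1)+b \ge 0$, and automatically true when it is negative), this is decidable with $O(1)$ integer arithmetic and no further square root. The same scheme, mutatis mutandis, computes $\floor{x_-}$: start from $\floor{\frac{-b - (s+1)}{2a}}$ (an underestimate because we over-rounded $\sqrt D$ upward) and test whether it should be increased by $1$ via an analogous squared integer comparison. One must also decide whether the roots are equal when $D>0$ — they are not — and handle the ordering $x_- < x_+$, which is automatic since $a>0$.

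The main obstacle — really the only subtlety — is getting the off-by-one adjustments provably correct: the naive formula $\floor{(-b + \floor{\sqrt D})/(2a)}$ need \emph{not} equal $\floor{(-b+\sqrt D)/(2a)}$, because dividing by $2a$ can either preserve or spoil the discrepancy introduced by flooring the square root, and the safe bracket is an interval of length one integer, which a single sign-checked squared comparison resolves. I would also remark that everything stays within $O(1)$ basic operations because $D$, $s$, $2a$, $b$, and the quantities $2a(q\pm 1)+b$ are all computed by a bounded number of additions, multiplications, one division per root, and the single square root; squaring for the comparison introduces no new square roots. Finally I would note the output convention: the algorithm returns the count of real roots ($0$, $1$, or $2$) together with the integer floors $\floor{x_-} \le \floor{x_+}$, which is exactly what the higher-level algorithm (casting a ray at $h_n$) consumes.
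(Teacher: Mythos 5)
Your algorithm is correct, but it is built around a misconception that makes it substantially more complicated than it needs to be, and the paper's own proof is a one-liner by comparison. You assert that ``the naive formula $\floor{(-b + \floor{\sqrt D})/(2a)}$ need \emph{not} equal $\floor{(-b+\sqrt D)/(2a)}$, because dividing by $2a$ can either preserve or spoil the discrepancy introduced by flooring the square root.'' This is false in the setting that matters, namely when $b\in\ZZ$ and $2a\in\ZZ_{>0}$ (which is the case after clearing denominators; otherwise ``basic integer arithmetic'' cannot touch $a,b,c$ at all). The relevant identity is
\[
\floor{\tfrac{p}{q}} \;=\; \floor{\tfrac{\floor{p}}{q}} \qquad\text{for all } p\in\RR,\ q\in\ZZ_{>0},
\]
which, combined with $\floor{k+x}=k+\floor{x}$ for $k\in\ZZ$, gives directly
\[
\floor{r_1} \;=\; \floor{\tfrac{-b+\sqrt D}{2a}} \;=\; \floor{\tfrac{-b+\floor{\sqrt D}}{2a}},
\]
with no correction term whatsoever. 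This is precisely the identity the paper invokes, and it is what renders your sign-checked squared comparison for $\floor{x_+}$ redundant: the candidate $q$ is \emph{always} the answer, never $q+1$. For $\floor{r_2}$ the same identity together with $\floor{-x}=-\ceil{x}$ gives $\floor{r_2}=\floor{\tfrac{-b-\ceil{\sqrt D}}{2a}}$, using a ceiling rather than your $s+1$; your replacement of $\ceil{\sqrt D}$ by $s+1$ is actually wrong when $D$ is a perfect square, and here your adjustment step does real work only because you first introduced the error. The paper also avoids your WLOG reduction to $a>0$ by writing $\sign a$ and $|a|$ directly. In short: your proof reaches the right endpoint by a roundabout path, but the stated reason for taking that path is incorrect, and once you know the identity $\floor{p/q}=\floor{\floor p/q}$ the whole lemma is immediate.
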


\begin{proof}
    If $a = 0$, there is one root and its floor is $\lfloor -c/b\rfloor$.
    Otherwise, the exact roots are:
    \begin{equation*}
        r_1 = \frac{-b + \sqrt{b^2 - 4ac}}{2a}, \quad\quad r_2 = \frac{-b - \sqrt{b^2 - 4ac}}{2a}.
    \end{equation*}
    Since
    $\floor{\frac{p}{q}} = \floor{\frac{\lfloor p \rfloor}{q}}$
    whenever $q \in \ZZ_{>0}$, the floored roots can be rewritten as
    \begin{equation*}
        \floor{r_1} = \floor{\frac{-\sign{a}b + \floor{\sign{a}\sqrt{b^2 - 4ac}}}{2|a|}},
    \end{equation*}
    \begin{equation*}
        \floor{r_2} = \floor{\frac{-\sign{a}b - \ceil{\sign{a}\sqrt{b^2 - 4ac}}}{2|a|}}.
\qedhere
    \end{equation*}
\end{proof}

\begin{lemma}[Lattice ray cast at hyperbola]
    \label{lemma:exists_lattice_ray_cast_alg}
    Let $n > 0$, ${\bf p} \in \RR^2$ and ${\bf v} \in \RR^2 \setminus \left\{(0, 0)\right\}$.

    There exists an algorithm which performs only $O(1)$ basic integer arithmetic operations (including
    at most two integer square roots) to compute
    \begin{equation*}
        \raycast(H_n, {\bf p}, {\bf v}) \coloneqq
        \begin{cases}
            \min M & \text{ if } M \ne \emptyset, \\
                 \infty & \text{ otherwise,}
        \end{cases}
    \end{equation*}
    where
    \begin{equation*}
        M = \left\{m \in \ZZ_{\ge 0}: \cardinal{\left\{{\bf p} + m{\bf v}, {\bf p} + (m+1){\bf v}\right\} \cap H_n} = 1\right\}.
    \end{equation*}
\end{lemma}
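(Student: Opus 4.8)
The key idea is that the set $\{{\bf p}+t{\bf v} : t\ge 0\}$ crosses the hyperbola $h_n$ in at most two points, and the value $m\in M$ records (roughly) when a crossing happens \emph{between two consecutive lattice points} of the ray. The plan is to substitute the parametrization $(x,y) = {\bf p} + t{\bf v}$ into the defining equation $xy = n$, obtaining a quadratic polynomial $g(t) = ({\bf p}_x + t{\bf v}_x)({\bf p}_y + t{\bf v}_y) - n$ in the single variable $t$. The sign of $g$ tells us whether a given point of the ray is in $H_n$ (where $g\ge 0$, since $H_n = \{xy\ge n\}$ in the positive quadrant; one must be a little careful with the branch, but ${\bf p}\in H_n$ pins this down) or strictly below it. A lattice index $m$ belongs to $M$ exactly when exactly one of $g(m)$, $g(m+1)$ is nonnegative, i.e.\ when the interval $[m,m+1]$ straddles a sign change of $g$. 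So $\min M$ is determined by the roots of $g$: compute $\floor{r_1}$ and $\floor{r_2}$ (the floors of the two real roots, if any) using Lemma~\ref{lemma:quadratic_formula}, and from the relative position of $0$, $r_1$, $r_2$ and the leading coefficient ${\bf v}_x{\bf v}_y$ of $g$, read off the smallest nonnegative integer $m$ with the straddling property; if $g$ has no real roots, or all sign changes occur at $t<0$, or $g$ never changes sign in $[0,\infty)$, then $M=\emptyset$ and we return $\infty$. All of this is $O(1)$ arithmetic plus the (at most two, since $g$ has two roots) integer square roots used inside Lemma~\ref{lemma:quadratic_formula}.

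\textbf{Steps, in order.} First, expand $g(t) = {\bf v}_x{\bf v}_y\, t^2 + ({\bf p}_x{\bf v}_y + {\bf p}_y{\bf v}_x)\,t + ({\bf p}_x{\bf p}_y - n)$ and record its coefficients $(A,B,C)$. Second, handle the degenerate case $A = {\bf v}_x{\bf v}_y = 0$ separately: then $g$ is affine (or constant), it has at most one root, and the analysis reduces to finding the single threshold $t^* = -C/B$ (when $B\ne 0$) and checking on which side of it $0$ lies; constant $g$ gives $M=\emptyset$. Third, in the generic case $A\ne 0$, invoke Lemma~\ref{lemma:quadratic_formula} to decide the number of real roots and to get $\floor{r_1}, \floor{r_2}$ with $r_1\ge r_2$ (reordering if needed so $r_1$ is the larger). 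Fourth, translate "exactly one of ${\bf p}+m{\bf v}, {\bf p}+(m+1){\bf v}$ lies in $H_n$" into "$[m,m+1]$ contains exactly one root of $g$ counted with the correct sign behaviour": since $g$ changes sign at each simple root, $m\in M$ iff $\floor{r_i}=m$ for exactly an odd number of $i\in\{1,2\}$ with the appropriate endpoint convention (and one must take care of the boundary case $g(m)=0$ or $g(m+1)=0$, which corresponds to a lattice point landing exactly on $h_n$ — here the convention in the definition of $M$, namely that ${\bf p}+m{\bf v}\in H_n$ counts the point as "above", resolves the ambiguity). Fifth, from the at-most-two candidate values $\floor{r_2}$ and $\floor{r_1}$, discard those that are negative or fail the parity/straddle test, and return the minimum of the survivors, or $\infty$ if none survive.

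\textbf{Main obstacle.} The genuinely delicate part is not the quadratic formula — that is handed to us by Lemma~\ref{lemma:quadratic_formula} — but the bookkeeping of \emph{boundary cases}: (a) a lattice point of the ray lying exactly on $h_n$, so that $g$ vanishes at an integer; (b) a double root of $g$, where $g$ touches $0$ without changing sign, so that no index should be reported even though $\floor{r_1}=\floor{r_2}$ is a legitimate integer; (c) correctly identifying which connected component of $H_n$ the ray interacts with (the set $\{xy\ge n\}$ in $\RR^2$ has the branch in the third quadrant too, and a ray could in principle leave the positive quadrant), which is why the hypothesis is phrased for general ${\bf p}\in\RR^2$ but in our application ${\bf p}\in H_n$ sits in the positive branch; and (d) the half-open nature of the comparison: $M$ asks for the \emph{cardinality of the intersection with $H_n$} of a two-point set to be exactly $1$, so both "enters $H_n$" and "exits $H_n$" events count, and we want the earliest of them. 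The cleanest way to handle all of this uniformly is to work directly with the finite sequence of signs of $g$ at integer points near the roots: $m\in M$ iff $\operatorname{sign}'(g(m))\ne\operatorname{sign}'(g(m+1))$ under the convention $\operatorname{sign}'(0)=+1$ (matching ${\bf p}+m{\bf v}\in H_n\Leftrightarrow g(m)\ge 0$), and since $g(m)\ge 0$ can hold only for $m$ in the (at most one) closed interval between the two roots, the candidate values of $m$ are confined to $\{\floor{r_2}-1,\floor{r_2},\floor{r_1}\}$ and can be checked in $O(1)$. Once this sign convention is fixed, the correctness argument is a short finite case analysis and all the subtleties of (a)--(d) collapse into it.
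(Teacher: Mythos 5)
Your plan — parametrize the ray, obtain the quadratic $g(t)$, invoke Lemma~\ref{lemma:quadratic_formula} to locate the roots, restrict attention to a constant number of candidate values of $m$ near those roots, and test each — is essentially the same structure as the paper's proof. However, your final membership criterion is incorrect, and this is not a small bookkeeping slip.

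You reduce ``$m\in M$'' to ``$\operatorname{sign}'(g(m))\ne\operatorname{sign}'(g(m+1))$'' and explicitly assert the equivalence ${\bf p}+m{\bf v}\in H_n\Leftrightarrow g(m)\ge 0$. This is false: by the definition in~\eqref{eq:H_n}, $H_n=\{(x,y)\in\RR^2_{\ge 0}: xy\ge n\}$, so $g(m)\ge 0$ (i.e.\ $xy\ge n$) is necessary but not sufficient; the point could lie in the third quadrant. You yourself flag this as obstacle~(c) and observe that in the application ${\bf p}\in H_n$ pins down the branch, but the lemma is stated for arbitrary ${\bf p}\in\RR^2$, and indeed the algorithm calls $\raycast$ from points such as ${\bf p}+\textmono{out}\notin H_n$. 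Concretely: if both ${\bf p}+m{\bf v}$ and ${\bf p}+(m+1){\bf v}$ have $g\ge 0$ but one of them lies in the third quadrant, then exactly one is in $H_n$, so $m\in M$, yet your sign test excludes it; conversely, a candidate near a root ``straddled'' by two third-quadrant points is reported by your test but is not in $M$. Since you return the minimum survivor, such a spurious candidate can produce a wrong answer. The paper avoids this by checking each of the $O(1)$ candidates directly against the definition: it verifies that one endpoint has both coordinates positive with product $\ge n$ while the other has product $<n$ or has both coordinates negative. Replacing your sign criterion with that direct check (still $O(1)$ per candidate) repairs the proof.

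A secondary, more minor point: your candidate set $\{\floor{r_2}-1,\floor{r_2},\floor{r_1}\}$ implicitly assumes $g$ is concave (the ``at most one closed interval'' remark). When $A={\bf v}_x{\bf v}_y>0$, the larger root is crossed from negative to nonnegative, so if that root is an integer the transition occurs at $m=\floor{r_1}-1$, which your set omits. Either enlarge the set symmetrically (as the paper does, keeping four candidates per root pair) or argue separately that the concave-up case cannot contribute a smaller valid $m$; as written, there is a case you do not cover.
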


\begin{proof}
    Consider the function $f: \RR \to \RR$ defined as
    \[
         f(m) \coloneqq ({\bf p}_x + m {\bf v}_x)({\bf p}_y + m {\bf v}_y) - n.
    \]
    Observe that  $f(m) = am^2 + bm + c$ with
    \[
        a = {\bf v}_x {\bf v}_y, \quad b = ({\bf v}_x {\bf p}_y + {\bf v}_y {\bf p}_x), \quad c = {\bf p}_x {\bf p}_y - n,
    \]

    Every number $m$ in the set $M$ satisfies that the segment $\{{\bf p} + (m+t){\bf v}: t \in [0, 1]\}$ crosses the
    hyperbola $\{xy = n\}$. Put differently, $M$ is contained in $\{r_1,r_1+1,r_2,r_2+1\}$ where $r_1$ and $r_2$ are the floors of the roots of $f$, computed by Lemma~\ref{lemma:quadratic_formula}. Hence, we only need to check which of these four (or less, if $f$ has less than two real roots, or if $r_2\in \{r_1,r_1+1\}$) points indeed lie in $M$, and compute the minimum of them.

    By definition, an $m\in \ZZ$ lies in $M$ if and only if $m\ge 0$ and the two points $\{{\bf p}+m{\bf v}, {\bf p}+(m+1){\bf v}\}$ satisfy that one of them has both coordinates positive and with product $\ge n$ while the other either has product of coordinates $<n$ or has both coordinates negative. This can be checked in constant time.
\end{proof}
%RESCATE 5

\subsubsection{Right search bases}\label{subsubsec:rsb}

The following geometric property of two-dimen\-sio\-nal lattices will be used repeatedly in what follows:

\begin{lemma}
\label{lemma:convex_line}
    Let $X \subset \RR^2$ be a convex set, $\Lambda \subset \RR^2$ a lattice, and $P$ a half-plane delimited by a lattice line $\ell$.
    Let $\ell'$ be the next lattice line parallel to $\ell$ in the direction opposite to $P$.
    If $X \cap \ell' = \emptyset$ and $\cardinal{X \cap \ell} \ge 2$ then $X \cap \Lambda \subset P$.
\end{lemma}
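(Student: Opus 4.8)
The plan is to encode ``which of the parallel lattice lines a point sits on'' by a single linear functional, and then combine convexity with one discreteness observation. First I would record that, since $\ell$ is a lattice line, completing a primitive lattice vector in the direction of $\ell$ to a basis of $\Lambda$ produces a linear functional $\phi\colon\RR^2\to\RR$ with $\phi(\Lambda)=\ZZ$ and $\ell=\phi^{-1}(0)$, such that the lattice lines parallel to $\ell$ are exactly the lines $\phi^{-1}(j)$ for $j\in\ZZ$. Fixing the sign of $\phi$ so that $P=\{\phi\le 0\}$ (note $\ell\subset P$), the line $\phi^{-1}(1)$ is then the first lattice line parallel to $\ell$ on the side opposite to $P$, i.e.\ $\ell'=\phi^{-1}(1)$; and a lattice point lies in $P$ if and only if the value of $\phi$ on it is $\le 0$.

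Next I would argue by contradiction. Suppose there is a point $\mathbf{z}\in X\cap\Lambda$ with $\mathbf{z}\notin P$, so $\phi(\mathbf{z})$ is an integer $\ge 1$. If $\phi(\mathbf{z})=1$ then $\mathbf{z}\in X\cap\ell'$, contradicting $X\cap\ell'=\emptyset$; hence $\phi(\mathbf{z})\ge 2$. Since $\cardinal{X\cap\ell}\ge 2$ the set $X\cap\ell$ is in particular nonempty, so pick any $\mathbf{a}\in X\cap\ell$, which satisfies $\phi(\mathbf{a})=0$. By convexity the segment $[\mathbf{a},\mathbf{z}]$ lies in $X$, and the restriction of $\phi$ to this segment is an affine function taking the values $0$ and $\phi(\mathbf{z})\ge 2$; by the intermediate value theorem it attains the value $1$ at some $\mathbf{y}\in[\mathbf{a},\mathbf{z}]\subseteq X$. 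Then $\mathbf{y}\in X\cap\phi^{-1}(1)=X\cap\ell'$, contradicting $X\cap\ell'=\emptyset$. Therefore no such $\mathbf{z}$ exists, i.e.\ $X\cap\Lambda\subset P$.

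The only real content is the discreteness step in the previous paragraph: the hypothesis $X\cap\ell'=\emptyset$ upgrades ``$\mathbf{z}$ lies on some lattice line strictly beyond $\ell$'' to ``$\mathbf{z}$ lies at least two lattice lines beyond $\ell$'', so that $\ell'$ is squeezed strictly between $\ell$ and $\mathbf{z}$ and is hence met by the segment $[\mathbf{a},\mathbf{z}]\subseteq X$. Everything else is routine, and the main point to be careful about is the standard justification that a functional $\phi$ with $\phi(\Lambda)=\ZZ$ and $\phi^{-1}(0)=\ell$ exists and has the parallel lattice lines as its integer level sets. I would also note in passing that the argument only uses $X\cap\ell\neq\emptyset$, so the hypothesis $\cardinal{X\cap\ell}\ge 2$ is stronger than needed, but there is no harm in stating the lemma in the form in which it will be applied.
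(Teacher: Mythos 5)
Your proof is correct and takes a genuinely different route from the paper's. The paper picks two points ${\bf p}, {\bf q} \in X \cap \ell$ together with a hypothetical ${\bf r} \in (X \cap \Lambda) \setminus P$, forms the triangle ${\bf p}{\bf q}{\bf r} \subseteq X$, applies Thales to compute the length of the slice $\ell' \cap {\bf p}{\bf q}{\bf r}$, and observes that its endpoints ${\bf p}', {\bf q}'$ lie in $\tfrac1d\Lambda$ to force a point of $\Lambda$ into $X \cap \ell'$. Your argument replaces all of this by projecting onto a functional $\phi$ (which should be taken \emph{affine} rather than linear, since $\ell$ need not pass through the origin; equivalently, translate so that $\ell$ does) with $\phi(\Lambda)=\ZZ$, $\ell=\phi^{-1}(0)$, $\ell'=\phi^{-1}(1)$, $P=\{\phi\le 0\}$: any lattice point $\mathbf{z}\in X\setminus P$ has $\phi(\mathbf{z})\ge 1$, and the segment from any $\mathbf{a}\in X\cap\ell$ to $\mathbf{z}$ lies in $X$ and meets $\ell'$ by the intermediate value theorem. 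This is both shorter and slightly stronger: as you note, $X\cap\ell\neq\emptyset$ already suffices, and $\mathbf{a}$ need not be a lattice point, whereas the paper's step ``${\bf p}', {\bf q}' \in \tfrac1d\Lambda$'' really relies on ${\bf p},{\bf q}$ being lattice points --- which does hold in the two places the lemma is used (Lemmas~\ref{lemma:o} and~\ref{lemma:i}) but is not part of the hypothesis as stated. A minor cosmetic remark: the case split on $\phi(\mathbf{z})=1$ versus $\phi(\mathbf{z})\ge 2$ is unnecessary, since the intermediate-value step already handles $\phi(\mathbf{z})=1$ by producing $\mathbf{y}=\mathbf{z}$.
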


\begin{proof}
    Let ${\bf p}, {\bf q}$ be two of the points in $X \cap \ell$, and suppose a lattice point
    ${\bf r}$ exists in $X \cap \Lambda \setminus P$.
    As all three points belong in $X$, which is convex, the triangle ${\bf p}{\bf q}{\bf r} \subseteq X$.
    Denote by ${\bf p}' = \overline{{\bf p}{\bf r}} \cap \ell'$, ${\bf q}' = \overline{{\bf q}{\bf r}} \cap \ell'$, and let
    $d = \operatorname{dist}({\bf r},\ell) / \operatorname{dist}(\ell, \ell') \in \ZZ_{\ge 2}$, the
    lattice distance between ${\bf r}$ and $\ell$.

    With this notation we have that ${\bf p}',{\bf q}' \in \tfrac1d\Lambda$ and, by Thales's Theorem,
    the length of segment $\ell' \cap {\bf p}{\bf q}{\bf r}$ must be $\tfrac{d - 1}{d} \abs{\overline{{\bf p}{\bf q}}}$.
    Hence, $\ell' \cap {\bf p}{\bf q}{\bf r}$ contains at least one lattice point in $X \cap \Lambda \cap \ell' = \emptyset$,
    so ${\bf r}$ cannot exist.
\end{proof}

Our algorithm to compute $\Nextpt({\bf p})$  is based on maintaining a basis of ${\bf p}+\Lambda$ with special properties.

\begin{definition}
    \label{def:right_search_basis}
    Let ${\bf p} \in H_n$ with ${\bf p} - {\bf b}_2 \notin H_n$.
    A \emph{right search basis for ${\bf p}$ in $H_{n,{\bf p}+\Lambda}$} is a lattice basis $(\textmono{in}, \textmono{out})$ of $\Lambda$
    satisfying
    \begin{enumerate}
        \item $\textmono{in}_x>0$, $\textmono{out}_x \ge 0$ and $\det(\textmono{in}, \textmono{out}) < 0$,
        \item ${\bf p} + \textmono{in} \in H_n$, ${\bf p} + \textmono{out} \notin H_n$.
        \item $\Nextpt({\bf p})  - {\bf p}$ lies in the closed cone generated by $\left\{\textmono{in}, \textmono{out}\right\}$.
        Equivalently, the slopes of $\textmono{in}$ and $\textmono{out}$ are, respectively, an upper and a lower bound for the slope of $\Minsl({\bf p})$.
    \end{enumerate}
\end{definition}

It is obvious that:

\begin{proposition}
    \label{prop:nextpt_in_cone}
    \label{prop:standard_is_rsb}
    If ${\bf b}_1, {\bf b}_2$ is the standard basis of $\Lambda$ then $({\bf b}_1, -{\bf b}_2)$ is a right search basis for every ${\bf p}\in H_n$ with ${\bf p}- {\bf b}_2 \notin H_n$.
\end{proposition}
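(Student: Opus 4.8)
The plan is to verify the three conditions of Definition~\ref{def:right_search_basis} directly for the pair $(\textmono{in},\textmono{out})\coloneqq({\bf b}_1,-{\bf b}_2)$, reading everything off from the explicit form of the standard basis (Definition~\ref{def:standard_basis}): ${\bf b}_1=({{\bf b}_1}_x,{{\bf b}_1}_y)$ with ${{\bf b}_1}_x>0$, ${{\bf b}_1}_y\ge 0$, and ${\bf b}_2=(0,{{\bf b}_2}_y)$ with ${{\bf b}_2}_y>0$. As a preliminary remark, $({\bf b}_1,-{\bf b}_2)$ is again a basis of $\Lambda$, since negating one vector of a basis preserves it.

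For condition (1) I would simply note that $\textmono{in}_x={{\bf b}_1}_x>0$, $\textmono{out}_x=0\ge 0$, and $\det({\bf b}_1,-{\bf b}_2)=-\det({\bf b}_1,{\bf b}_2)=-{{\bf b}_1}_x{{\bf b}_2}_y<0$. For condition (2), the requirement ${\bf p}+\textmono{out}={\bf p}-{\bf b}_2\notin H_n$ is precisely the hypothesis imposed on ${\bf p}$. For the remaining half, ${\bf p}\in H_n$ with $n>0$ forces ${\bf p}_x,{\bf p}_y>0$; then, since ${{\bf b}_1}_x>0$ and ${{\bf b}_1}_y\ge 0$, both coordinates of ${\bf p}+{\bf b}_1$ are positive and $({\bf p}_x+{{\bf b}_1}_x)({\bf p}_y+{{\bf b}_1}_y)\ge {\bf p}_x{\bf p}_y\ge n$ by monotonicity of $xy$ on the positive quadrant, so ${\bf p}+{\bf b}_1\in H_n$.

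For condition (3) I would use its ``equivalently'' reformulation, showing that the slope of ${\bf b}_1$ bounds the slope of $\Minsl({\bf p})$ from above and that of $-{\bf b}_2$ from below. The lower bound is immediate because $-{\bf b}_2=(0,-{{\bf b}_2}_y)$ points vertically downward, so its slope is the least possible. For the upper bound, part (2) just exhibited ${\bf b}_1$ as an element of $(H_{n,{\bf p}+\Lambda}-{\bf p})\cap\{x\ge 0\}$, the very set over which $\Minsl({\bf p})$ minimizes the slope; hence $\Minsl({\bf p})$ has slope at most that of ${\bf b}_1$. If instead one wants to argue the cone statement about $\Nextpt({\bf p})-{\bf p}$ literally, I would additionally observe that $\Minsl({\bf p})$ has strictly positive $x$-coordinate — it is neither the downward vertical primitive vector $-{\bf b}_2$ (because ${\bf p}-{\bf b}_2\notin H_n$) nor the upward one (because ${\bf b}_1$ is a strictly-lower-slope competitor) — and that $\Nextpt({\bf p})-{\bf p}$ is a positive integer multiple of $\Minsl({\bf p})$, hence lies in the same closed cone.

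I do not expect any real obstacle: this is pure unwinding of definitions, which is why the statement is labelled obvious. The only spots worth a moment's attention are the single sign flip in the determinant, the degenerate vertical-vector cases when translating between the ``cone'' and the ``slope'' versions of condition (3), and the observation that ${\bf p}\in H_n$ with $n>0$ already rules out ${\bf p}$ lying on a coordinate axis.
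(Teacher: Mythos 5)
Your proof is correct and follows essentially the same approach as the paper: verify each of the three conditions of Definition~\ref{def:right_search_basis} directly from the explicit form of the standard basis. One small point in your favour: for condition~(3), the paper merely asserts that the slope of $\Minsl({\bf p})$ lies in $[-\infty,0]$ ``by definition'' (which actually requires noticing that $\Lambda$ contains a horizontal vector), whereas your observation that ${\bf b}_1$ itself is a competitor in the minimization gives the required upper bound on the slope more directly.
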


\begin{proof}
    Condition (i) holds by construction.
    Condition (ii) follows for $\textmono{in} = {\bf b}_1$ from ${\bf p}\in H_n$ and ${{\bf b}_1}_x, {{\bf b}_1}_y \ge 0$, and it follows for $\textmono{out} = -{\bf b}_2$ by the hypothesis ${\bf p} - {\bf b}_2 \notin H_n$.
    For condition (iii) observe that the slope of $\Minsl({\bf p})$ lies in $[-\infty,0]$ by definition, and the slopes of ${\bf b}_1$ and $-{\bf b}_2$ are $\ge 0$ and $=-\infty$, respectively.
\end{proof}

Our algorithm to find $\Minsl({\bf p})$ and $\Nextpt({\bf p})$ (Algorithm~\ref{alg:compute_nextpt} below) starts with the standard basis and keeps updating a right search basis $(\textmono{in}, \textmono{out})$ relying for this in the following two lemmas, illustrated in Figure~\ref{fig:alg-update-steps}.

In both lemmas we have the following setting:
Let ${\bf p} \in H_n$ with ${\bf p} - {\bf b}_2 \notin H_n$ and let $(\textmono{in}, \textmono{out})$ be a right search basis for ${\bf p}$ in $H_{n,{\bf p}+\Lambda}$.
Since $ \Minsl({\bf p}) \in \cone{\textmono{in}, \textmono{out}}\cap \Lambda$ and $(\textmono{in}, \textmono{out})$ is a basis for $\Lambda$, there are unique $a,b\in \ZZ_{\ge 0}$ such that
\[
    \Minsl({\bf p}) = a\  \textmono{in} + b\ \textmono{out}.
\]

\begin{figure}[htb]
    \centering
    \begin{minipage}{.45\textwidth}
        \centering
        \includegraphics[width=.95\textwidth]{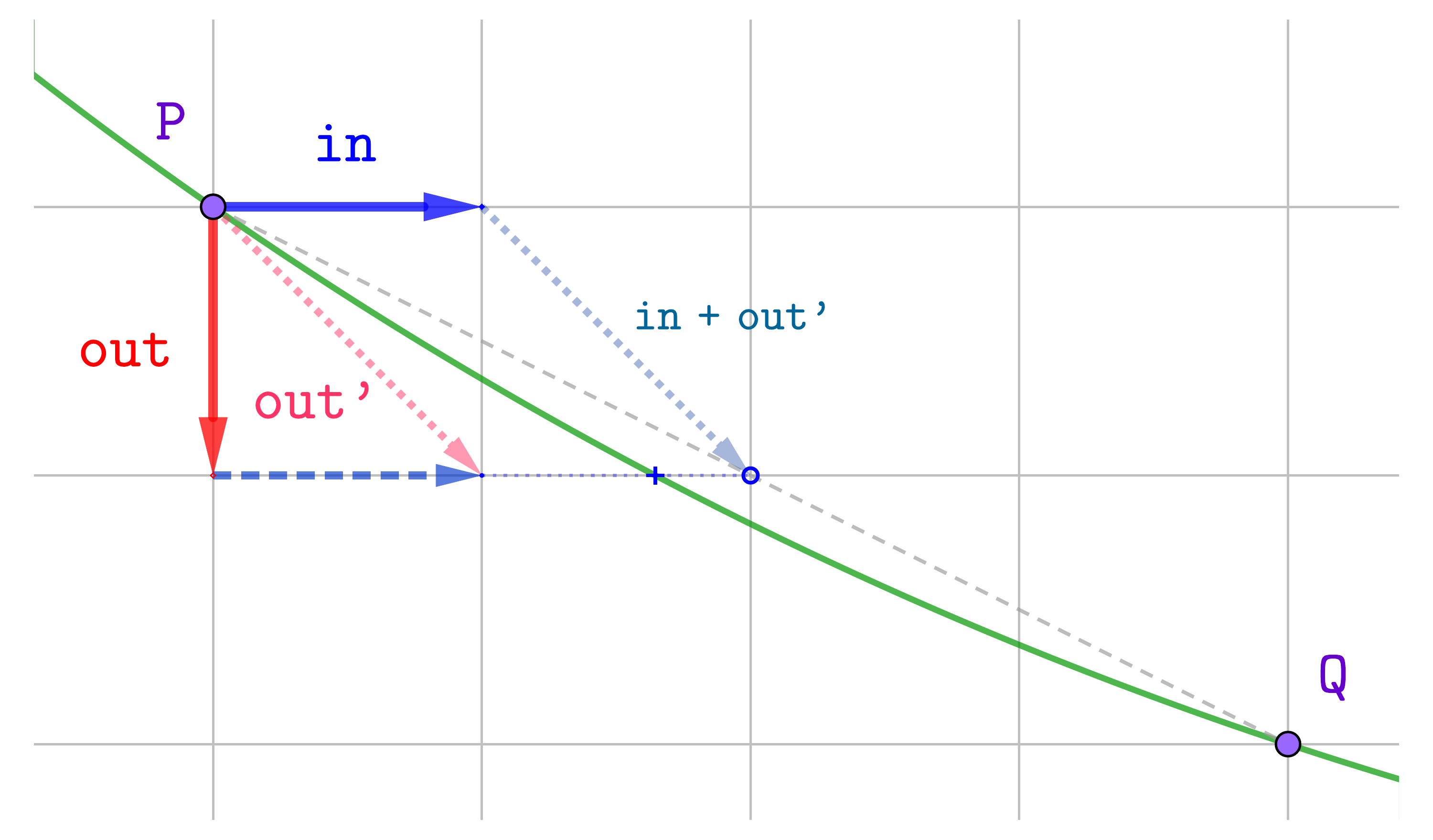}
    \end{minipage}%
    \hfill%
    \begin{minipage}{.45\textwidth}
        \centering
        \includegraphics[width=.95\textwidth]{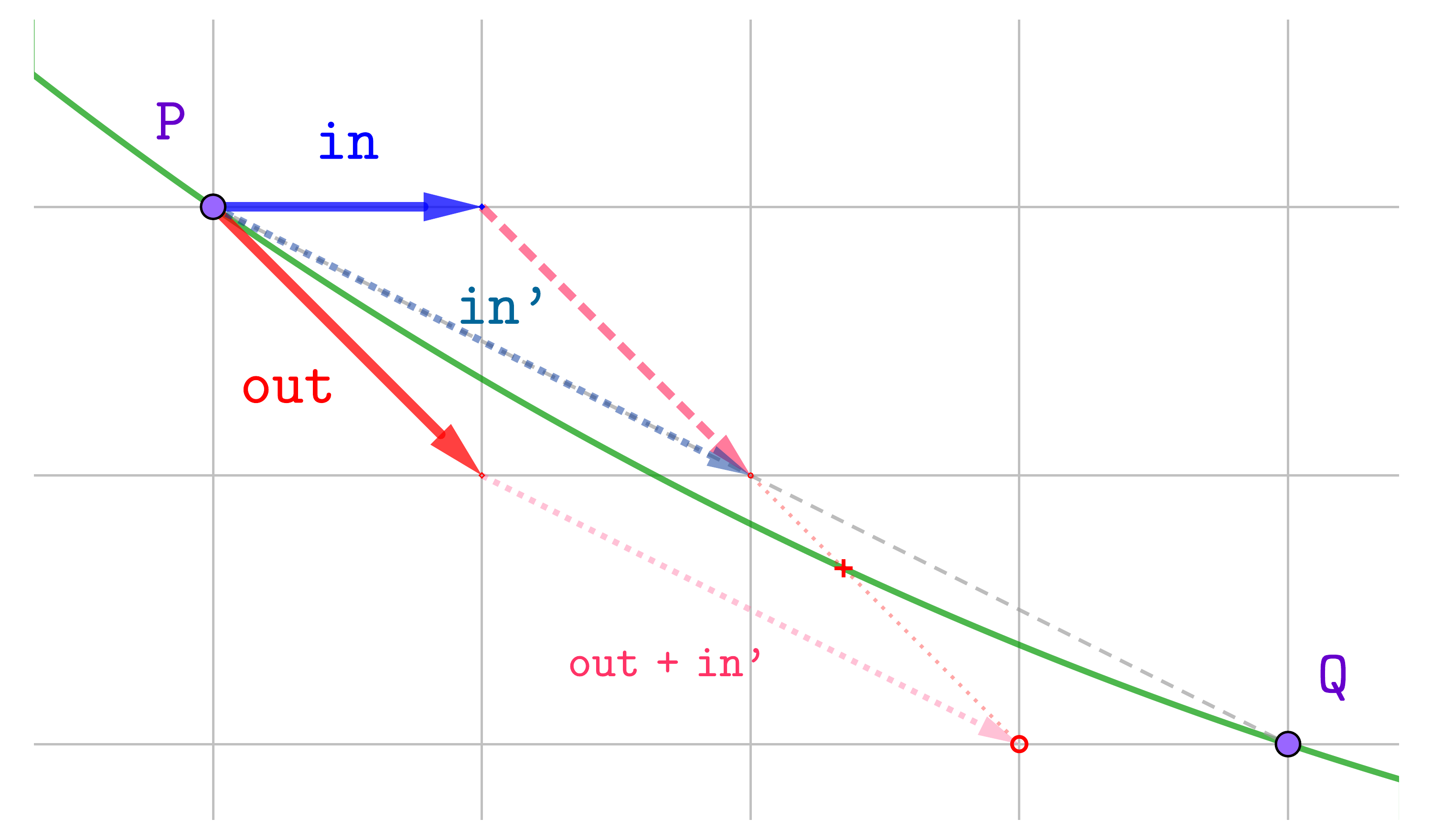}
    \end{minipage}
    \caption{Successive updates of the $\textmono{out}$ and $\textmono{in}$ bounds for $\Lambda = \ZZ^2$}
    \label{fig:alg-update-steps}
\end{figure}

\begin{lemma}
    \label{lemma:o}
     Let $\textmono{o} = \raycast(H_n, {\bf p} + \textmono{in}, \textmono{out})$ and $\textmono{in}' = \textmono{in} + \textmono{o} \cdot \textmono{out}$. Then:
    \begin{enumerate}
        \item $\textmono{o}\ne \infty$.
        \item $\Minsl({\bf p})$ lies in $\cone{\textmono{in}', \textmono{out}}$ but not in $\cone{\textmono{in}'+\textmono{out}, \textmono{out}}$.
        \item $(\textmono{in}', \textmono{out})$ is still a right search basis for ${\bf p}$.
        \item $\Minsl({\bf p}) = a' \textmono{in}' + b\ \textmono{out}$, where $a' = a \bmod b$.
    \end{enumerate}
\end{lemma}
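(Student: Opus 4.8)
We work entirely inside the lattice $\Lambda$ with the fixed basis $(\textmono{in}, \textmono{out})$, using the coordinate description $\Minsl({\bf p}) = a\,\textmono{in} + b\,\textmono{out}$ with $a, b \in \ZZ_{\ge 0}$. The plan is to first understand geometrically what $\textmono{o} = \raycast(H_n, {\bf p}+\textmono{in}, \textmono{out})$ computes: it is the largest integer such that the lattice point ${\bf p}+\textmono{in}+\textmono{o}\cdot\textmono{out}$ is still the last point of $H_n$ along the ray from ${\bf p}+\textmono{in}$ in direction $\textmono{out}$. Since ${\bf p}+\textmono{in} \in H_n$ (condition (ii) of a right search basis) and $\textmono{out}$ points ``downward out'' of $H_n$ (because ${\bf p}+\textmono{out}\notin H_n$ and $\textmono{out}_x \ge 0$), moving from ${\bf p}+\textmono{in}$ in the direction $-\textmono{out}$ we stay in $H_n$ as long as possible, but moving in direction $+\textmono{out}$ we eventually leave $H_n$; I would argue that the strict convexity of $H_n$ forces this crossing to happen after finitely many steps (this is item (1)), essentially because $\textmono{out}_x\ge 0$ and $\textmono{out}_y<0$ — the latter follows from $\det(\textmono{in},\textmono{out})<0$ together with $\textmono{in}_x>0$ in the degenerate case, and from $\textmono{out}_x\ge 0$, $({\bf p}+\textmono{out})\notin H_n$ in general — so the ray ${\bf p}+\textmono{in}+t\,\textmono{out}$ has $x\to+\infty$, $y\to-\infty$ and must exit the region $xy\ge n$.

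\textbf{Items (2) and (4): the modular step.} Writing $\textmono{in}' = \textmono{in} + \textmono{o}\cdot\textmono{out}$, I would observe that $\textmono{in}'$ is by definition of $\raycast$ the primitive-ish vector of the form $\textmono{in} + j\,\textmono{out}$ with the largest $j$ such that ${\bf p}+\textmono{in}+j\,\textmono{out}\in H_n$ (here using ${\bf p}+\textmono{in}\in H_n$, so $j$ ranges over nonnegative integers and the ``one in, one out'' condition pins down $\textmono{o}$). Hence ${\bf p}+\textmono{in}'\in H_n$ but ${\bf p}+\textmono{in}'+\textmono{out}\notin H_n$. Now I would re-express $\Minsl({\bf p})$ in the new basis $(\textmono{in}',\textmono{out})$: since $\textmono{in} = \textmono{in}' - \textmono{o}\cdot\textmono{out}$, we get $\Minsl({\bf p}) = a\,\textmono{in}' + (b - a\cdot\textmono{o})\,\textmono{out}$. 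The content of (4) is that $b - a\cdot\textmono{o} \ge 0$ and in fact the coefficient of $\textmono{in}'$ should be read as $a' = a \bmod b$ — but that is not literally what the displayed formula in (4) says; rather (4) asserts $\Minsl({\bf p}) = a'\,\textmono{in}' + b\,\textmono{out}$ with $a' = a\bmod b$. Reconciling this: the claim must be that $\textmono{o} = \lfloor a/b\rfloor$ (when $b>0$), so that $b - a\cdot\textmono{o} = b$? No — I would instead argue that the natural thing is that after this update the roles have shifted and one should track that $a\,\textmono{in} + b\,\textmono{out} = a\,\textmono{in}' + (b-a\textmono{o})\textmono{out}$, and the parallel with the Euclidean algorithm (announced in the introduction) is that $\textmono{o}$ equals the quotient $\lfloor b'/a\rfloor$ for the ``previous'' coefficients; the cleanest route is to prove $\textmono{o}$ is the unique nonnegative integer with $a\,\textmono{in}' + (b - a\textmono{o})\,\textmono{out} \in \cone{\textmono{in}',\textmono{out}}\setminus\cone{\textmono{in}'+\textmono{out},\textmono{out}}$, which after relabeling is exactly the ``$\bmod$'' reduction. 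I would establish (2) directly from the definition of $\raycast$: $\Minsl({\bf p})-{\bf p}$... rather, $\Minsl({\bf p})$ lies in $\cone{\textmono{in},\textmono{out}}$, and I must show it survives into the smaller cone $\cone{\textmono{in}',\textmono{out}}$. This is where I would invoke Lemma~\ref{lemma:convex_line}: the lattice lines parallel to $\textmono{out}$ through ${\bf p}+\textmono{in}+j\,\textmono{out}$ for $j > \textmono{o}$ miss $H_n$ on the relevant side, and $H_n$ contains at least two points on an appropriate line, forcing all of $H_{n,{\bf p}+\Lambda}$ (translated) into the half-plane on the $\textmono{in}'$ side — hence $\Minsl({\bf p})\in\cone{\textmono{in}',\textmono{out}}$; the ``not in $\cone{\textmono{in}'+\textmono{out},\textmono{out}}$'' half will follow because ${\bf p}+\textmono{in}'+\textmono{out}\notin H_n$ would otherwise be dominated, contradicting minimality of the slope or maximality in $\raycast$.

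\textbf{Item (3): still a right search basis.} Once (2) is in hand, checking Definition~\ref{def:right_search_basis} for $(\textmono{in}',\textmono{out})$ is routine: condition (i) holds because $\textmono{in}'_x = \textmono{in}_x + \textmono{o}\cdot\textmono{out}_x > 0$ (sum of a positive and a nonnegative number), $\textmono{out}_x\ge 0$ is unchanged, and $\det(\textmono{in}',\textmono{out}) = \det(\textmono{in},\textmono{out}) < 0$ since adding a multiple of $\textmono{out}$ to $\textmono{in}$ does not change the determinant — and this also confirms $(\textmono{in}',\textmono{out})$ is still a $\ZZ$-basis of $\Lambda$; condition (ii) is exactly ${\bf p}+\textmono{in}'\in H_n$ (which holds by the definition of $\textmono{o}$ via $\raycast$) together with ${\bf p}+\textmono{out}\notin H_n$ (unchanged); condition (iii) is precisely item (2), that $\Minsl({\bf p})\in\cone{\textmono{in}',\textmono{out}}$.

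\textbf{Main obstacle.} The routine parts are (1) and (3); the real work is matching the geometric $\raycast$ output to the arithmetic statement in (4), i.e.\ verifying that the update $\textmono{in}\mapsto\textmono{in}+\textmono{o}\cdot\textmono{out}$ implements one step of a Euclidean-type reduction on the coefficients $(a,b)$ and that $\textmono{o}$ is exactly the quotient that turns $a$ into $a\bmod b$. I expect this to require carefully pinning down, via Lemma~\ref{lemma:convex_line} and the strict convexity of the hyperbola, that the last lattice point of $H_n$ along the ray from ${\bf p}+\textmono{in}$ corresponds to the coefficient $b\cdot\textmono{o}$ being ``as large as possible without overshooting $b$'', equivalently $\textmono{o} = \lfloor a/b\rfloor$; the subtlety is that $\Minsl({\bf p})$ itself may lie deep in the cone (large $a$), so the crossing detected by $\raycast$ at ${\bf p}+\textmono{in}$ must be shown to be governed by the ratio $a/b$ rather than by where $\Minsl({\bf p})$ sits, which is a convexity argument comparing the line through ${\bf p}+\Minsl({\bf p})$ direction $\textmono{out}$ with the line through ${\bf p}+\textmono{in}$.
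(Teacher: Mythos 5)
Your treatment of items (1) and (3) is essentially the same as the paper's and is fine: (1) follows from convexity of $H_n$ (if the ray from ${\bf p}+\textmono{in}$ in direction $\textmono{out}$ never left $H_n$, the parallel ray from ${\bf p}$ would not either, contradicting ${\bf p}+\textmono{out}\notin H_n$) — note, though, that your auxiliary claim ``$x\to+\infty$'' can fail when $\textmono{out}_x=0$ (e.g.\ $\textmono{out}=-{\bf b}_2$), so you should rely on the convexity argument alone; and (3) is a routine check once (2) holds, exactly as you say.

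On item (4) you have actually caught an error in the paper's statement, but you did not push through to a resolution. Your computation
\[
\Minsl({\bf p}) = a\,\textmono{in} + b\,\textmono{out} = a\,\textmono{in}' + (b - a\,\textmono{o})\,\textmono{out}
\]
is correct and forces the coefficient of $\textmono{in}'$ to remain $a$; it is the coefficient of $\textmono{out}$ that changes. Item (4) should therefore read $\Minsl({\bf p}) = a\,\textmono{in}' + b'\,\textmono{out}$ with $b' = b \bmod a$, not ``$a' = a\bmod b$''. Once you accept this, (4) follows \emph{immediately} from (2): expressing $\cone{\textmono{in}',\textmono{out}}\setminus\cone{\textmono{in}'+\textmono{out},\textmono{out}}$ in $(\textmono{in}',\textmono{out})$-coordinates gives exactly $\{(u,v): u > v \ge 0\}$, so $(a,\,b-a\textmono{o})$ lying there means $0\le b-a\textmono{o}<a$, i.e.\ $\textmono{o}=\floor{b/a}$ and $b-a\textmono{o}=b\bmod a$. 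Your attempt to reconcile with the printed formula (``the claim must be that $\textmono{o}=\lfloor a/b\rfloor$\dots No —'') could not succeed because the formula is wrong; abandoning it cleanly would have been better than leaving the paragraph in a confused state.

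The genuine gap is in item (2), and it concerns the \emph{second} half (``not in $\cone{\textmono{in}'+\textmono{out},\textmono{out}}$''), which you treat as an afterthought. The first half is the easy one and does not need Lemma~\ref{lemma:convex_line} at all: ${\bf p}+\textmono{in}'\in H_n$ and $\textmono{in}'_x>0$ imply that the slope of $\textmono{in}'$ is an upper bound for the slope of $\Minsl({\bf p})$, and combined with the right-search-basis hypothesis this already places $\Minsl({\bf p})\in\cone{\textmono{in}',\textmono{out}}$. The second half is where Lemma~\ref{lemma:convex_line} is really needed, and your sentence ``${\bf p}+\textmono{in}'+\textmono{out}\notin H_n$ would otherwise be dominated, contradicting minimality\dots'' is not an argument: knowing ${\bf p}+\Minsl({\bf p})\in H_n$ says nothing directly about whether ${\bf p}+\textmono{out}$ or ${\bf p}+\textmono{in}'+\textmono{out}$ are in $H_n$, since these need not lie between ${\bf p}$ and ${\bf p}+\Minsl({\bf p})$. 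The paper's route is to apply Lemma~\ref{lemma:convex_line} to a carefully chosen half-open convex set
\[
X=\bigl({\bf p}+[0,1)\cdot\textmono{in}'+\cone{\textmono{in}'+\textmono{out},\textmono{out}}\bigr)\cup\{{\bf p}+\textmono{in}'\}
\]
(intersected with $H_n$), with $\ell$ the line through ${\bf p}$ and ${\bf p}+\textmono{in}'$, and $\ell'$ the parallel lattice line through ${\bf p}+\textmono{out}$: the only lattice points of $X$ on $\ell'$ are ${\bf p}+\textmono{out}$ and ${\bf p}+\textmono{in}'+\textmono{out}$, and both are outside $H_n$ (the latter by the defining property of $\textmono{o}$), so the lemma forces all lattice points of $X\cap H_n$ onto the $\ell$-side, which is exactly the statement that $\Minsl({\bf p})\notin\cone{\textmono{in}'+\textmono{out},\textmono{out}}$. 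You gestured at the right lemma but pointed it at the wrong half and did not construct the convex set, so this step is missing.
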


\begin{proof}
        $\textmono{o}$ is chosen as the maximum non-negative integer such that
        ${\bf p} + \textmono{in} + \textmono{o}\cdot\textmono{out} = {\bf p} + \textmono{in}'$ is in $H_n$ while
        ${\bf p} + \textmono{in} + (\textmono{o} + 1)\cdot\textmono{out} = {\bf p} + \textmono{in}' + \textmono{out}$ is not.
        If this maximum was $\infty$, convexity of $H_n$ and the fact that ${\bf p} + \textmono{in}\in H_n$ implies that the whole ray ${\bf p} + \textmono{in}+ \RR_{\ge 0} \cdot \textmono{out}$ is contained in $H_n$.
        Since $H_n$ is convex and closed, this, in turn implies that any ray in the direction of $\textmono{out}$ and starting at a point of $H_n$ is also contained in $H_n$.
        In particular, this would happen for the ray starting at ${\bf p}$, but this contradicts the fact that ${\bf p} + \textmono{out} \notin H_n$. Hence, $\textmono{o}$ is never $\infty$.

        Parts (3) and (4) follow easily from (2), so we only need to prove (2).
        The first part of (2) follows from the fact that ${\bf p} + \textmono{in}'$ is in $H_n$, so that $\textmono{in}'$ is still an upper bpund for the slope of $\Minsl({\bf p})$. For the second part we apply Lemma~\ref{lemma:convex_line} as follows: Let $\ell$ be the line containing $\bf p$ and ${\bf p} +\textmono{in}'$ and let $\ell'$ be the line parallel to it passing through ${\bf p} +\textmono{out}$. Let
        \[
            X= ({\bf p} + [0,1)\cdot \textmono{in}' + \cone{\textmono{in}'+\textmono{out}, \textmono{out}}) \cup \{{\bf p} + \textmono{in}'\}.
        \]
        $X$ is convex since it is the union of a sum of convex sets (which is convex) and a point in the boundary of that sum. It contains the points $\bf p$ and ${\bf p} +\textmono{in}'$ from $\ell$, but no lattice point from $\ell'$ (since the only lattice points in $\ell' \cap X$ are $\textmono{out}$ and $\textmono{in}'+\textmono{out}$). Hence, no lattice point other than ${\bf p}$ lies in ${\bf p} + \cone{\textmono{in}'+\textmono{out}, \textmono{out}}$, which finishes the proof of (2).
\end{proof}

\begin{lemma}
    \label{lemma:i}
    Let $\textmono{i} = \raycast(H_n, {\bf p} + \textmono{out}, \textmono{in})$ and $\textmono{out}' = \textmono{out} + \textmono{i} \cdot \textmono{in}$. Then:
    \begin{enumerate}
        \item $\textmono{i} = \infty$ if and only if $\textmono{in} = \Minsl({\bf p})$. If this is not the case:
        \item $\Minsl({\bf p})$ lies in $\cone{\textmono{in}, \textmono{out}'}$. It only lies in $\cone{\textmono{in}, \textmono{in}+\textmono{out}'}$ if $\textmono{in}+\textmono{out}' = \Minsl({\bf p})$.
        \item $(\textmono{in}, \textmono{out}')$ is still a right search basis for ${\bf p}$.
        \item $\Minsl({\bf p}) = a'\ \textmono{in} + b\ \textmono{out}'$, where $a' = [(a-1) \bmod b] + 1$. (That is, $a' = a \bmod b$ unless $a$ divides $b$, in which case $a'=b$).
    \end{enumerate}
\end{lemma}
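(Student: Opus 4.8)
The plan is to mirror the proof of Lemma~\ref{lemma:o}, swapping the roles of $\textmono{in}$ and $\textmono{out}$, but carefully tracking the fact that now $\textmono{in}$ is the vector that is \emph{in} $H_n$ (after translating by ${\bf p}$), so the ``$\infty$'' case is no longer impossible: it occurs precisely when $\textmono{in}$ is already the answer $\Minsl({\bf p})$. First I would recall the setup: $(\textmono{in},\textmono{out})$ is a right search basis for ${\bf p}$, so ${\bf p}+\textmono{in}\in H_n$, ${\bf p}+\textmono{out}\notin H_n$, $\det(\textmono{in},\textmono{out})<0$, and $\Minsl({\bf p}) = a\,\textmono{in} + b\,\textmono{out}$ with $a,b\in\ZZ_{\ge 0}$. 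Since $\Minsl({\bf p})$ is a primitive nonzero vector with ${\bf p}+\Minsl({\bf p})\in H_n$, and since $\textmono{out}$ has the property that ${\bf p}+\textmono{out}\notin H_n$, we must have $b\ge 1$; also $a\ge 1$ because a positive multiple of $\textmono{out}$ alone cannot land in $H_n$ (convexity argument as in Lemma~\ref{lemma:o}: the ray from ${\bf p}$ in direction $\textmono{out}$ leaves $H_n$ immediately and never returns). Actually the finer point is $a=0$ would force $\Minsl({\bf p})$ to be a multiple of $\textmono{out}$, impossible; and $b=0$ would force it to be $a\,\textmono{in}$, which by primitivity means $a=1$ and $\Minsl({\bf p})=\textmono{in}$.

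For part (1): $\textmono{i} = \raycast(H_n,{\bf p}+\textmono{out},\textmono{in})$ is the largest $j\ge 0$ with ${\bf p}+\textmono{out}+j\,\textmono{in}\in H_n$ but ${\bf p}+\textmono{out}+(j+1)\,\textmono{in}\notin H_n$. Unlike in Lemma~\ref{lemma:o}, the starting point ${\bf p}+\textmono{out}$ is \emph{not} in $H_n$, so this is a genuine ray-cast into the region. I would argue $\textmono{i}=\infty$ exactly when the ray ${\bf p}+\textmono{out}+\RR_{\ge 0}\textmono{in}$ is eventually contained in $H_n$ and never leaves; by convexity and closedness of $H_n$ this happens iff this ray's direction $\textmono{in}$ is an asymptotic direction of $H_n$ that ``points into'' it — equivalently, iff $\Minsl({\bf p})$, which has slope between those of $\textmono{in}$ and $\textmono{out}$, coincides with $\textmono{in}$ (the extreme case $a=1,b=0$ above). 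I should make this precise: if $\Minsl({\bf p})=\textmono{in}$ then every point ${\bf p}+j\,\textmono{in}$ for $j\ge 0$ is in $H_n$, hence so is ${\bf p}+\textmono{out}+j\,\textmono{in}$ for large $j$ by a convexity/translation argument, and it stays in because the slope of $\Minsl({\bf p})$ is the minimal slope of any integer direction from ${\bf p}$ staying in $H_n$; conversely if $\textmono{i}=\infty$ then $\textmono{in}$ is a direction in which $H_n$ is unbounded and ``recessive'', forcing $\textmono{in}=\Minsl({\bf p})$ by the same extremality.

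For parts (2), (3), (4) I would assume $\textmono{i}<\infty$, so $\textmono{out}'=\textmono{out}+\textmono{i}\,\textmono{in}$ is a well-defined lattice vector with ${\bf p}+\textmono{out}'\notin H_n$ but ${\bf p}+\textmono{out}'+\textmono{in}\in H_n$ (by definition of the raycast). Since $\det(\textmono{in},\textmono{out}')=\det(\textmono{in},\textmono{out})<0$ and $\textmono{in}_x>0$, $\textmono{out}'_x=\textmono{out}_x+\textmono{i}\,\textmono{in}_x\ge 0$, the pair $(\textmono{in},\textmono{out}')$ is a basis satisfying condition (i) of a right search basis; condition (ii) holds by the above, and condition (iii) — i.e., $\Minsl({\bf p})\in\cone{\textmono{in},\textmono{out}'}$ — is exactly the first sentence of part (2). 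To prove that, I would invoke Lemma~\ref{lemma:convex_line} with $\ell$ the line through ${\bf p}$ and ${\bf p}+\textmono{out}'$, $\ell'$ the parallel lattice line through ${\bf p}+\textmono{out}'+\textmono{in}$, and a convex set $X$ built from ${\bf p}$, the translate of $\cone{\textmono{in},\textmono{out}'+\textmono{in}}$ (wait — rather $\cone{\textmono{out}',\textmono{out}'+\textmono{in}}$ reflected appropriately): concretely $X = ({\bf p}+[0,1)\cdot\textmono{out}' + \cone{\textmono{in}, \textmono{in}+\textmono{out}'})\cup\{{\bf p}+\textmono{out}'\}$, which contains ${\bf p}$ and ${\bf p}+\textmono{out}'$ on $\ell$ but — because $\textmono{out}'+\textmono{in}\notin H_n$ forces $H_n$-points to be on the correct side — no lattice point of $\ell'$ lies in $X\cap H_n$; hence no lattice point of $H_{n,{\bf p}+\Lambda}-{\bf p}$ lies strictly on the $\textmono{out}'$-side, giving $\Minsl({\bf p})\in\cone{\textmono{in},\textmono{out}'}$, and simultaneously the ``only lies in $\cone{\textmono{in},\textmono{in}+\textmono{out}'}$ if it equals $\textmono{in}+\textmono{out}'$'' refinement. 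Finally part (4) is the bookkeeping translation of (2): writing $\Minsl({\bf p}) = a\,\textmono{in}+b\,\textmono{out} = (a - \textmono{i}\,b)\,\textmono{in} + b\,\textmono{out}'$ and noting (2) says the $\textmono{in}$-coefficient $a' := a-\textmono{i}\,b$ satisfies $1\le a'\le b$ (it is $\ge 1$ unless $\Minsl({\bf p})$ is a multiple of $\textmono{out}'$, impossible, and it is $\le b$ by the ``not in $\cone{\textmono{in},\textmono{in}+\textmono{out}'}$'' part unless equality $a'=b$ forced by $\Minsl({\bf p})=\textmono{in}+\textmono{out}'$ scaled, i.e. the divisibility case). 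Since $\textmono{i}$ is the largest integer keeping things valid, $\textmono{i} = \lfloor (a-1)/b\rfloor$, whence $a' = a - b\lfloor(a-1)/b\rfloor = [(a-1)\bmod b]+1$, matching the stated formula.

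The main obstacle I anticipate is part (1) and the precise delineation of the divisibility edge case in part (4): the asymmetry with Lemma~\ref{lemma:o} — there the raycast \emph{starts inside} $H_n$ and can never escape to $\infty$, whereas here it starts outside — means the $\infty$ characterization requires a genuine (if short) recession-cone argument about $H_n$, and the ``$a'=b$ when $a\mid b$'' clause demands being careful about which boundary of the cone $\Minsl({\bf p})$ is allowed to sit on. Everything else is a faithful re-run of the bookkeeping in Lemma~\ref{lemma:o} with $\textmono{in}$ and $\textmono{out}$ interchanged and the inequalities flipped accordingly.
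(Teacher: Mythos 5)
Your plan has two genuine gaps; neither is a cosmetic slip.

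\textbf{Part (1), the direction $\textmono{in}=\Minsl({\bf p})\Rightarrow\textmono{i}=\infty$, is argued backwards.} You claim that if $\textmono{in}=\Minsl({\bf p})$ then the ray ${\bf p}+\textmono{out}+\ZZ_{\ge 0}\textmono{in}$ eventually enters $H_n$ and stays there. First, the premise is generally false: if $\textmono{in}_y<0$ (the typical case), the ray's $y$-coordinate becomes negative and it never re-enters $H_n$. But more importantly, even if the ray did enter $H_n$, that would imply $\textmono{i}<\infty$, the \emph{opposite} of what you want: $\raycast$ returns the \emph{first} index where the in/out status changes, and since ${\bf p}+\textmono{out}\notin H_n$, the first change is the ray entering. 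The correct argument is that the ray \emph{never} enters $H_n$: every point ${\bf p}+\textmono{out}+j\,\textmono{in}$ is strictly below the line through ${\bf p}$ with direction $\textmono{in}$ (because $\det(\textmono{in},\textmono{out})<0$) and has $x\ge{\bf p}_x$; if $\textmono{in}=\Minsl({\bf p})$ then no lattice point of $H_n$ in $\{x\ge{\bf p}_x\}$ lies below that line. Your sketch of the converse (``$\textmono{in}$ is recessive, forcing equality by extremality'') is also not a proof; the paper makes it rigorous with Lemma~\ref{lemma:convex_line} applied to $X=P\cap H_n$ for the half-plane $P$ bounded by the line through ${\bf p}$ and ${\bf p}+\textmono{out}$.

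\textbf{Part (2), the mirrored convex set does not satisfy the hypothesis of Lemma~\ref{lemma:convex_line}.} You set $X=\bigl({\bf p}+[0,1)\cdot\textmono{out}'+\cone{\textmono{in},\textmono{in}+\textmono{out}'}\bigr)\cup\{{\bf p}+\textmono{out}'\}$, with $\ell$ through ${\bf p}$ and ${\bf p}+\textmono{out}'$, and $\ell'$ the next parallel line. But ${\bf p}+\textmono{in}$ and ${\bf p}+\textmono{in}+\textmono{out}'$ are lattice points that lie both on $\ell'$ and in $X$ (take $s=0$, cone coefficient $(1,0)$ or $(0,1)$), so $X\cap\Lambda\cap\ell'\ne\emptyset$ and the lemma cannot be invoked. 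You also have the membership inverted: by definition of $\textmono{i}$, one has ${\bf p}+\textmono{out}'\notin H_n$ but ${\bf p}+\textmono{out}'+\textmono{in}\in H_n$, whereas your argument relies on ``$\textmono{out}'+\textmono{in}\notin H_n$.'' This is precisely where the ``faithful re-run with $\textmono{in}$ and $\textmono{out}$ interchanged'' breaks: in Lemma~\ref{lemma:o} the new vector $\textmono{in}'$ satisfies ${\bf p}+\textmono{in}'\in H_n$ while ${\bf p}+\textmono{in}'+\textmono{out}\notin H_n$, but here the pattern is the opposite, so the cone-plus-strip set that worked there no longer has empty intersection with $\ell'$. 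The paper sidesteps this by applying Lemma~\ref{lemma:convex_line} to the genuinely different convex set $X=H_n\cap\{x>{\bf p}_x\}$, together with the unimodular parallelepiped on ${\bf p},{\bf p}+\textmono{in},{\bf p}+\textmono{out}',{\bf p}+\textmono{out}'+\textmono{in}$ whose two ``$H_n$-vertices'' lie on the upper line $\ell$; that $\ell'$ carries no lattice point of $X$ follows from convexity of $H_n$ and the definition of $\textmono{i}$. Your parts (3) and (4) (the bookkeeping $a'=a-\textmono{i}b$ and $\textmono{i}=\lfloor(a-1)/b\rfloor$) are correct modulo the above.
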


\begin{proof}
        One direction of part (1) is obvious: if $\textmono{in}=\Minsl({\bf p})$ then no  point of $H_{n,\Lambda}\cap\{x \ge {\bf p}_x\}$ lies below the line starting at ${\bf p}$ and with direction $\textmono{in}$.
        In particular,  no  point of $H_{n,\Lambda}$ lies in the ray starting at ${\bf p} +\textmono{out}$ with direction $\textmono{in}$, which implies that $\textmono{i} = \infty$.

        For the converse, let us consider the closed half-plane $P$ with ${\bf p}$ and ${\bf p} +\textmono{out}$ on its boundary and ${\bf p} +\textmono{in}$ in its interior.
        Let $X = P \cap H_n$ and let us apply Lemma~\ref{lemma:convex_line} to it, with $\ell$ being the line containing ${\bf p}$ and  ${\bf p} +\textmono{in}$, and $\ell'$ the one containing ${\bf p} +\textmono{out}$ and  ${\bf p} +\textmono{in}+\textmono{out}$.
        The fact that $\textmono{in}=\infty$ implies that $\ell'$ contains no lattice point of $X$, so the Lemma says that no point of $X$ lies below the line $\ell$.
        Hence, $\textmono{in}=\Minsl({\bf p})$.

        Parts (3) and (4) again follow easily from (2), taking into account the exceptional case $\textmono{in}+\textmono{out}' = \Minsl({\bf p})$ in part (2). In this case, $a=\textmono{i}+1$, $b=1$ and $a'=b=1$, as claimed. Conversely, if $a$ divides $b$, the fact that $\Minsl({\bf p})$ is primitive implies that $a=1$. Hence,
    \[
    \Minsl({\bf p}) =   \textmono{in} + b\ \textmono{out}.
    \]
    The definition of $\Minsl({\bf p})$ then implies that $b$ is the smallest integer for which $\textmono{in} + b\ \textmono{out}$ is in $H_n$ or, equivalently, that $b= \textmono{i}+1$.

        So we only need to prove (2).
        The fact that $\Minsl({\bf p})$ only lies in $\cone{\textmono{in}, \textmono{in}+\textmono{out}'}$ if $\textmono{in}+\textmono{out}' = \Minsl({\bf p})$ is easy: $\Minsl({\bf p})$ certainly lies in $\cone{\textmono{in}+\textmono{out}', \textmono{out}}$ because $(textmono{in}, \textmono{out})$ was a right search basis and ${\bf p} +\textmono{in}+\textmono{out}' \in H_n$.
        To show that $\Minsl({\bf p})$ lies in $\cone{\textmono{in}, \textmono{out}'}$ we are going to apply Lemma~\ref{lemma:convex_line} to the convex set $X = H_n \cap \{x>{\bf p}_x\}$.
    Observe that the four points ${\bf p}$, ${\bf p}+ \textmono{in}$, ${\bf p} + \textmono{out} + \textmono{i} \cdot \textmono{in}$ and ${\bf p} + \textmono{out} + (\textmono{i} + 1)\cdot \textmono{in}$ form a parallelepiped with
    \begin{align*}
        &{\bf p}\in H_n\setminus X, \qquad\qquad {\bf p} + \textmono{out} + \textmono{i} \cdot \textmono{in} = {\bf p} + \textmono{out}' \notin H_n,
        \\
        &{\bf p}+ \textmono{in}\in X, \quad\qquad {\bf p} + \textmono{out} + (\textmono{i} +1) \cdot \textmono{in} = {\bf p} + \textmono{out}' + \textmono{in} \in X.
    \end{align*}

    The parallelepiped is unimodular since
    \[
        \det(\textmono{out} + \textmono{i} \cdot \textmono{in}, \textmono{in}) = \det(\textmono{out}, \textmono{in}) = \det(\Lambda).
    \]
    Hence, the lines $\ell$ and $\ell'$ containing respectively $\{{\bf p}+ \textmono{in}, \quad {\bf p} + \textmono{out}' +  \textmono{in}\}$ and $\{{\bf p}, {\bf p} + \textmono{out}' \}$ are consecutive parallel lines, with $\ell$ above $\ell'$.
    The first one contains at least the two lattice points of $X$ defining it, and the second one contains no lattice point of $X$ because of the following argument.
    If it did, such lattice points must be of the form
    \[
        {\bf p} + k(\textmono{out} + \textmono{i} \cdot \textmono{in})
    \]
    for some $k>1$ and convexity of $H_n$ would then imply that ${\bf p} + \textmono{out} + \textmono{i} \cdot \textmono{in} \in H_n$, a contradiction with the definition of $\textmono{i}$.
    Hence, Lemma~\ref{lemma:convex_line} implies that all lattice points of $X$ lie on or above $\ell$, which is equivalent to saying that $\textmono{out}'$ is a lower bound for the slope of $\Minsl({\bf p})$, hence $\Minsl({\bf p}) \in \cone{\textmono{in}, \textmono{out}'}$.
\end{proof}

\subsubsection{The algorithm}\label{subsubsec:rsmin}

Theorem~\ref{thm:exists_find_nextpt} is proven by Algorithm~\ref{alg:compute_nextpt}, summarized as follows:
After computing a standard basis and moving the point ${\bf p}$ vertically down as much as possible, the algorithm uses the standard basis as an initial right search basis and then updates it by iterating the procedures of Lemmas~\ref{lemma:o} and~\ref{lemma:i} in turns.

\begin{algorithm}
    \caption{Compute $\Nextpt({\bf p})$.}
    \label{alg:compute_nextpt}

    \DontPrintSemicolon
    \SetKwInOut{Input}{Input}
    \SetKwInOut{Output}{Output}

    \Input{$\Lambda = \left\langle{\bf b}_1, {\bf b}_2\right\rangle \subseteq \QQ^2$ with
        ${{\bf b}_1}_x > 0, {{\bf b}_2}_x = 0, 0 \le {{\bf b}_1}_y < {{\bf b}_2}_y$, $n >0 $, ${\bf p} \in H_n$.}
    \Output{$\Nextpt({\bf p})$}

    \SetKwProg{Fn}{Function}{}{end}
    \Fn{\textmono{nextpt(}${\bf b}_1$, ${\bf b}_2\typeHint{\in \NonZero\Lambda}$, $n\typeHint{ > 0}$, ${\bf p}\typeHint{\in H_n}$\textmono{)}$\typeHint{\to {\scriptstyle H_{n,{\bf p}+\Lambda}\,\cup\;\left\{\infty\right\}}}$} {
        \If {${\bf p} - {\bf b}_2 \in H_n$} {\Return ${\bf p} - \textmono{raycast(}H_n,\,{\bf p},\,-{\bf b}_2\textmono{)}\cdot{\bf b}_2$}
        $\textmono{in},\,\textmono{out}\typeHint{\in \NonZero\Lambda} \gets {\bf b}_1,\,-{\bf b}_2$\;
        $\textmono{minsl} \typeHint{\in \NonZero\Lambda \cup \left\{\square\right\}} \gets \square$\;
        \While {$\textmono{minsl} = \square$} {
            $\textmono{o} \typeHint{\in \ZZ_{\ge 0}} \gets \textmono{raycast(}H_n,\,{\bf p} + \textmono{in},\,\textmono{out}\textmono{)}$\;
            \label{line:m_in}
            $\textmono{in} \gets \textmono{in} + \textmono{o}\cdot\textmono{out}$\;
            \label{line:update_in}
            $\textmono{i} \typeHint{\in \ZZ_{>0} \cup \left\{\infty\right\}} \gets \textmono{raycast(}H_n,\,{\bf p} + \textmono{out},\,\textmono{in}\textmono{)}$\;
            \label{line:m_out}
            \uIf {$\textmono{i} < \infty$} {
                $\textmono{out} \gets \textmono{out} + \textmono{i}\cdot\textmono{in}$
                \label{line:update_out}
            } \lElse {$\textmono{minsl} \gets \textmono{in}$}
        }
        $\textmono{m} \typeHint{\in \ZZ_{>0} \cup \left\{\infty\right\}} \gets \textmono{raycast(}H_n,\,{\bf p},\,\textmono{minsl}\textmono{)}$\;
        \uIf {$\textmono{m} = \infty$} {\Return $\infty$}
        \lElse {\Return ${\bf p} + \textmono{raycast(}H_n,\,{\bf p},\,\textmono{minsl}\textmono{)}\cdot\textmono{minsl}$}
    }
\end{algorithm}

The correctness and running time of the algorithm follows from the fact that at every iteration the search cone is narrowed, as illustrated in Figure~\ref{fig:alg-update-steps}, performing an agile binary search within the Stern-Brocot tree~\cite{Yan2013Number} until the slope of $\Minsl({\bf p})$. In other words, the algorithm is essentially following the same steps as the Euclidean algorithm would follow to compute a $\gcd$.

\begin{proof}[Proof of Theorem~\ref{thm:exists_find_next_vertex}]
    In the first lines of Algorithm~\ref{alg:compute_nextpt} we compute a standard basis and use it to output the bottommost lattice point of $H_n$ in the line $\{x={\bf p}_x\}$ if ${\bf p} \setminus {\bf b}_2 \in H_n$.
    If this is not the case then the algorithm uses the standard basis as an initial right search basis (Proposition~\ref{prop:standard_is_rsb}).
    After that the algorithm enters a loop in which the right search basis is modified as in Lemmas~\ref{lemma:o} and~\ref{lemma:i} (hence it remains a right search basis) and it only exits the loop when $\textmono{i}=\infty$ which, by Lemma~\ref{lemma:i}, implies that we have indeed found $\Minsl({\bf p}) = \textmono{in}$.
    The lines after the loop compute $\Nextpt({\bf p})$ from $\Minsl({\bf p})$.

    Each iteration of the ``while'' loop takes $O(1)$ operations by Lemma~\ref{lemma:exists_lattice_ray_cast_alg}. The only thing left to be proven is that, assuming $\Lambda$ reduced and ${\bf p}_y\notin (1,2)$, the algorithm terminates in $O(\log(n+\det \Lambda))$ iterations of the loop. To show this, let $a$ and $b$ be the coefficients from Lemmas~\ref{lemma:o} and~\ref{lemma:i} for our initial situation $\textmono{in}={\bf b}_1, \textmono{out}=-{\bf b}_2$. That is, $a, b\in \ZZ_{> 0}$ and
    \begin{align}
    \label{eq:a_and_b}
        \Minsl({\bf p}) = a\ {\bf b}_1 - b\ {\bf b}_2.
    \end{align}
    Consider the following algorithm to compute $\gcd(a,b)$:

    \begin{algorithm*}[H]
        \caption{Greatest common divisor.}
        \label{alg:gcd}

        \DontPrintSemicolon
%    \SetKwInOut{Input}{Input}
%    \SetKwInOut{Output}{Output}
%
%    \Input{$a,b \in \ZZ_{> 0}$}
%    \Output{$\gcd(a,b)$}
%
        \SetKwProg{Fn}{Function}{}{end}
        \Fn{\textmono{gcd(}$\textmono{a},\textmono{b}\typeHint{\in \ZZ_{>0}}$\textmono{)}$\typeHint{\to {\scriptstyle \ZZ_{>0}}}$} {
            \While {$\textmono{a}  \bmod\textmono{b} \ne 0$} {
                $\textmono{a} \gets \textmono{a}  \bmod\textmono{b}$\;
                $\textmono{b} \gets ((\textmono{b}-1) \bmod \textmono{a}) + 1$\;
            }
            \Return $\textmono{b}$\;
        }
    \end{algorithm*}
    This is the usual Euclidean algorithm, except we break the symmetry between $a$ and $b$ and take the representatives for $\textmono{b} \bmod \textmono{a}$ in $\{1,\dots, \textmono{a}\}$ instead of $\{0, \dots, \textmono{a}-1\}$.
    Thus, when the standard algorithm would get $\textmono{b}=0$ (and return $\textmono{a}$) ours gets $\textmono{b} = \textmono{a}$, which makes it exit the loop and return $\textmono{b}$.
    (When the standard algorithm would make $\textmono{a}=0$ and return $\textmono{b}$, ours does the same).

    Now, part (4) of Lemmas~\ref{lemma:o} and~\ref{lemma:i} implies that if we perform simultaneously each iteration of Algorithms~\ref{alg:compute_nextpt} and~\ref{alg:gcd}, the equation
    \[
        \Minsl({\bf p}) = \textmono{a}\ \textmono{in} + \textmono{b}\ \textmono{out}
    \]
    remains always true.
    Hence, both algorithms do the same number of iterations and this number is well-known to lie in $O(\log(\min\{a,b\}))$ for Algorithm~\ref{alg:gcd}. Hence, this is what to need to bound.

    For this, the fact that $\Lambda$ is reduced implies that ${{\bf b}_1}_x =1$ and ${{\bf b}_2}_x =0$, so Eq.~\ref{eq:a_and_b} implies that $a$ equals the $x$ coordinate of $\Minsl({\bf p})$.
    By Lemma~\ref{lemma:exception}, this is bounded by $2n+2\Delta$.
%RESCATE 6
\end{proof}

\begin{remark}
    \label{remark:generalized_algorithm_condition}
    The correctness of Algorithm~\ref{alg:compute_nextpt} follows from Lemmas~\ref{lemma:o} and~\ref{lemma:i}, which rely solely on the convexity of the hyperbola.
    Hence, if $H$ is the region above the graph of a convex function $f: \RR^2 \to \RR^2$, our algorithms can easily be adapted to compute $\conv{H \cap ({\bf p} + \Lambda)}$ as long as there is a lattice ray intersection algorithm $\raycast(H, {\bf p}, {\bf v})$ analogous to the one of Lemma~\ref{lemma:exists_lattice_ray_cast_alg}.

    Indeed, let ${\bf p} \in H$ be such that ${\bf p} - {\bf b}_2 \notin X$, where ${\bf b}_1, {\bf b}_2$ is the standard basis of $\Lambda$.
    The standard basis ${\bf b}_1, {\bf b}_2$ may no longer be a right search basis, but one right search basis for  ${\bf p}$ can be obtained using $\textmono{out}=-{\bf b}_2$ and computing $\textmono{in}$ via a single lattice ray cast from ${\bf p} + {\bf b}_1$ in the direction of $\pm {\bf b}_2$, depending on whether ${\bf p} + {\bf b}_1$ is in $X$ or not. The rest of our algorithms work with no change (although their complexity will depend on the particular $f$).
%Cambiando los ejes/la right seach basis en realidad se puede dar la vuelta completa a un convexo arbitrario.}
   %Hay casos en los que puede no llegar a existir una ``familia de 4 right search basis generales'' (si $X$ es muy estrecho), pero mi claim es que en este caso basta con tener un punto dentro del convexo y modificar el algoritmo (y la noci\'on de right search basis) para admitir que ${\bf p} + \textmono{in}$ est\'e fuera del convexo, de manera que el algoritmo entonces intente hacer dos lattice ray casts seguidos (entendidos geom\'etricamente, es decir, que no se salten el convexo si cae entre dos puntos lattice) si ${\bf p} + \textmono{in} + (\textmono{o}+1)\cdot\textmono{out} \in X$ (y entonces el algoritmo pasa al caso normal). No s\'e si merece la pena escribirlo.}
\end{remark}

\printbibliography

\end{document}